	\DeclareSymbolFont{AMSb}{U}{msb}{m}{n} 			% fix [noaMsfonts] bug
		\renewcommand{\in}{\smallin}
		\renewcommand{\notin}{\notsmallin}
		\renewcommand{\setminus}{\smallsetminus}
\newlength\figurewidth}
	\newcommand\fdash{\settowidth\figurewidth{9}\raisebox{0.6ex}{\makebox[\figurewidth]{\hrulefill}}}
		\newtheorem{theorem}{Theorem}%[section]
		\newtheorem{proposition}[theorem]{Proposition}
		\newtheorem{lemma}[theorem]{Lemma}
		\newtheorem*{claim*}{Claim}
		\newtheorem{corollary}[theorem]{Corollary}
		\theoremstyle{definition}
		\newtheorem{definition}[theorem]{Definition}
		\newtheorem{question}[theorem]{Question}
		\newtheorem{fact}[theorem]{Fact}
		\newtheorem{observation}[theorem]{Observation}
		\theoremstyle{remark}
		\newtheorem{remark}[theorem]{Remark}
		\newenvironment{claimproof}{}{}  %doesn't do anything
		\newcommand{\claimdone}{\hfill$\blacksquare$\par}
		\newcommand{\pw}[1]{\mathcal{P}\left(#1\right)}  %power set
		\newcommand{\fin}{\textup{fin}}          %finite sets
		\newcommand{\os}{\mleft\{ \,}             %curly bracket for { in set notation
		\newcommand{\cs}{\, \mright\}}             %curly bracket for } in set notation
		\newcommand{\set}[1]{\os#1\cs}                %set
		\newcommand{\la}{\mleft\langle\,}           % < for sequences
		\newcommand{\ra}{\, \mright\rangle}          % > for sequences
		\newcommand{\sq}[1]{\la#1\ra}              % sequences and generated objects
		\newcommand{\card}[1]{\mleft| #1 \mright|}    %cardinality
		\DeclareMathOperator{\cof}{cof}
		\DeclareMathOperator{\dom}{dom}
		\newcommand{\cf}{\operatorname{\mathrm{cf}}}
		\newcommand{\conc}{\mathbin{\!{}^\smallfrown\!}}           %concatenation
		\renewcommand{\restriction}{\mathbin{\!\upharpoonright}}   %better spacing for restriction
		\newcommand{\comb}[1]{\operatorname{\mathrm{comb}} \mleft(#1 \mright)}
		\renewcommand{\mid}{\shortmid}             % avoid confusion with cardinality |.|
	\title{Free sequences in $\pw{\omega}/\fin$}
		\author{David Chodounsk\'y}
		\address{Institute of Mathematics of the Czech Academy of Sciences,
		\v{Z}itn\'{a}~25, Praha~1, Czech Republic}
		\email{chodounsky@math.cas.cz}
		\author{Vera Fischer}
		\address{Kurt G\"{o}del Research Center, University of Vienna,
		Währinger straße~25, 1090 Wien, Austria}
		\email{vera.fischer@univie.ac.at}
		\author{Jan Greb\'{\i}k}
		\address{Institute of Mathematics of the Czech Academy of Sciences,
		\v{Z}itn\'{a}~25, Praha~1, Czech Republic}
		\email{grebik@math.cas.cz}
	\subjclass[2010]{03E17, 03E35, 06E05}
	\keywords{maximal free sequence, dense independent system, party forcing}
	\thanks{
		The first author was supported by the GA~CR project 17\fdash33849L and RVO:\ 67985840.
		The second author would like to thank FWF for the generous support through grant number Y1012{\fdash}N35.
		The third author was supported by the GA~CR project 15\fdash34700L and RVO:\ 67985840.
	}
\begin{document}

\begin{abstract}
	We investigate maximal free sequences in the Boolean algebra $\pw{\omega}/\fin$,
	as defined by D.\ Monk in~\cite{monk}.
	We provide some information on the general structure of these objects
	and we are particularly interested in the minimal cardinality
	of a free sequence, a~cardinal characteristic of 
	the continuum denoted~$\mathfrak{f}$.
	Answering a~question of Monk, we demonstrate the consistency of
	$\omega_1 = \mathfrak{i} = \mathfrak{f} < \mathfrak{u} = \omega_2$.
	In fact, this consistency is demonstrated in the model of S.\ Shelah
	for~$\mathfrak{i}< \mathfrak{u}$~\cite{shelah-i<u}.
	Our paper provides a streamlined and mostly self contained
	presentation of this construction.
\end{abstract}

\maketitle

\section{Introduction}\label{sec:intro}

\noindent
The paper uses the following convention: For an element $a$ of a Boolean algebra~$B$
we denote $a^0$ the complement of $a$, occasionally we also use $a^1$ to denote $a$.
This convention is used even for subsets of $\omega$ which are considered as
elements of the Boolean algebra $\pw{\omega}$.

Free sequences in Boolean algebras were explicitly defined
by Donald Monk in~\cite{monk}.

\begin{definition}
	Sequence $A = \la a_\alpha \mid \alpha \in \gamma \ra$ of elements of a Boolean algebra
	of ordinal length $\gamma$ is a \emph{free sequence} if the family
	$\set{ a^1_\alpha \mid \alpha < \beta } \cup \set{ a^0_\alpha \mid \beta \leq \alpha < \gamma }$
	is centered for each $\beta \leq \gamma$.
\end{definition}

The concept of free sequences comes from an analogous notion in topological spaces.
A sequence of points $\la x_\alpha \mid \alpha < \gamma \ra$ in a topological space
is a free sequence if the topological closure of $\la x_\alpha \mid \alpha < \beta \ra$
is disjoint from the topological closure of $\la  x_\alpha \mid \beta \leq \alpha < \gamma \ra$
for each $\beta \leq \gamma$.
These objects were first consider by A.\ Arhangel'ski\u{\i} in~\cite{arhangelski}
who introduced this concept in order to solve a famous problem of
Alexandroff and Urysohn about the bound on the cardinality of first countable compact spaces.
In the topological context, the most important consideration seems to be
the maximal possible size of a free sequence, this gives rise to a cardinal invariant
of a topological space closely related to the tightness of the space,
see e.g.~\cite{bella,stevo-SandL}, and existence of $\piup$-bases with additional
combinatorial properties~\cite{gorelic}.

S.\ Todor\v{c}evi\'{c} defined an algebraic version of the topological notion of a free sequence
in~\cite{stevo-FS} and demonstrated that the algebraic formulation is often more convenient
than the original topological concept (see also~\cite{stevo-baire}).
For compact zero-dimensional topological spaces the algebraic definition of Todor\v{c}evi\'{c}
coincides via the Stone duality with the notion of a free sequence in a Boolean algebra
as defined by Monk. Nevertheless, the notion of a free sequence in a Boolean algebra
is not precisely dual to the notion of a free sequence of points in a topological space,
see the discussion in~\cite{monk}.

A free sequence $\la a_\alpha \mid \alpha \in \gamma \ra$
is \emph{maximal} if it is maximal with respect to end-extension,
i.e.\ there exist no $a_\gamma$ such that
$\la a_\alpha \mid \alpha \in \gamma \ra \conc \la a_\gamma \ra$
is also a free sequence.
Monk was primarily interested in the spectrum of possible cardinalities of maximal
free sequences in Boolean algebras. Most notably, for a Boolean algebra~$B$
he defined $\mathfrak f(B)$ to be the cardinal
$\min \set{ \card{A} \mid A \text{ is a maximal free sequence in } B }$.
Monk investigated the relation of this cardinal
with other cardinal characteristics of Boolean algebras.
Let us remark at this point that the relation of the cardinal spectrum
of possible cardinalities of maximal free sequences of a given Boolean algebra
with the ordinal spectrum of the actual ordinal lengths of maximal free sequences is quite unclear.
Even the question whether $\mathfrak f(B)$ is realized
by a maximal free sequence of ordinal length exactly $\mathfrak f(B)$
is in general quite non-trivial.

One of the main problems stated in~\cite{monk} was the relation of $\mathfrak f(B)$
and the ultrafilter number $\mathfrak u(B)$; the minimal size of an ultrafilter base in~$B$.
One of the instances of this problem was solved by K.\ Selker~\cite{selker}
who used forcing to demonstrate that the existence of a Boolean algebra $B$
such that $\omega = \mathfrak f(B) < \mathfrak u(B) = \omega_1$
is consistent with~\textsf{ZFC+CH}.

The present paper is solely interested in free sequences in the Boolean algebra
$\pw{\omega}/\fin$.
We make several observations on free sequences and the relation of the free sequence number
with other cardinal characteristics of the continuum.
Most notably, we prove that the free sequence number is strictly smaller than
the ultrafilter number $\mathfrak u$ in the model for $\mathfrak i < \mathfrak u$ of
Shelah~\cite{shelah-i<u}. As the paper of Shelah is considered to be somewhat cryptic,
we opted for providing a streamlined, complete and mostly self contained presentation
of the forcing construction from~\cite{shelah-i<u}.
All the core ingredients of this construction are originally due to Shelah.
Our contribution, apart of the presentation,
is the argument concerning free sequences and the free sequence number $\mathfrak f$.
Reader interested only in Shelah's construction may
skip Section~\ref{sec:basic} and other parts of this paper
which are concerned with free sequences.

\section{Basic considerations}\label{sec:basic}

We will start with exploring basic facts about possible incarnations
of maximal free sequences in $\pw{\omega}/\fin$.\footnote{
	We will not formally distinguish between the elements of the Boolean algebra
	$\pw{\omega}/\fin$ and their representatives in $\pw{\omega}$.
	We write $a \subset^* b$ when $b \setminus a$ is finite.
}
We define the \emph{free sequence number} $\mathfrak f$ to be the minimal cardinality
of a maximal free sequence in $\pw{\omega}/\fin$,
i.e.\ $\mathfrak f = \mathfrak f (\pw{\omega}/\fin)$.
For a given free sequence $A = \la a_\alpha \in \pw{\omega} \mid \alpha < \gamma \ra$
we denote the set of admissible intersections as
\[ \comb{A} = \set{ \bigcap_{\alpha \in \Gamma} a_\alpha \cap \bigcap_{\alpha \in \Delta} a^0_\alpha
		\mid \Gamma, \Delta \in {[\gamma]}^{< \omega}, \Gamma < \Delta  }. \]
We will also consider the filter generated by a free sequence,
this is just the filter the free sequence generates as a centered subset of $\pw{\omega}/\fin$.

The free sequence number is closely related to other
well known cardinal characteristics of the continuum.
Let us give a brief overview of the relevant definitions.

Let $\mathcal U$ be a non-principal ultrafilter on $\omega$.
The character $\chiup(\mathcal U)$ of $\mathcal U$ is the minimal cardinality of
a base of $\mathcal U$, the $\piup$-character $\piup\chiup(\mathcal U)$
is the minimal cardinality of a $\piup$-base\footnote{
	$\mathcal B \subset {[\omega]}^\omega$ is a $\piup$-base of $\mathcal U$
	if there exists some $B \in \mathcal B$, $B \subset^* U$ for each $U \in \mathcal U$.
	A~$\piup$-base $\mathcal B$ is a base of $\mathcal U$ if moreover $\mathcal B \subset \mathcal U$.
}
of $\mathcal U$. 
The ultrafilter number $\mathfrak u$ is the cardinal
$\min \set{ \chiup(\mathcal U) \mid \mathcal U \text{ is a non-principal ultrafilter on } \omega }$,
the reaping number $\mathfrak r$ is the cardinal
$\min \set{ \piup\chiup(\mathcal U) \mid \mathcal U
		\text{ is a non-principal ultrafilter on } \omega }$.
We opted for a nonstandard definition of the reaping number
as it is more suitable for our purposes.

\begin{theorem}[\cite{balcar-simon}]
	The reaping number $\mathfrak r$ as defined above is equal to
	the minimal cardinality of a family
	$\mathcal R \subset {[\omega]}^\omega$
	such that for each $x \subset \omega$ there is $r \in \mathcal R$
	such that $r \subset^* x$ or $r \cap x =^* \emptyset$.
\end{theorem}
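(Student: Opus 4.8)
The plan is to prove the two inequalities between $\mathfrak r$ (as defined in the excerpt) and the cardinal $\mathfrak r'$ on the right-hand side of the statement --- the least size of a family $\mathcal R\subseteq[\omega]^\omega$ that is \emph{unreaped}, meaning every $x\subseteq\omega$ admits some $r\in\mathcal R$ with $r\subseteq^* x$ or $r\cap x=^*\emptyset$; note that both cardinals are easily seen to be infinite. For $\mathfrak r'\le\mathfrak r$ I would pick a non-principal ultrafilter $\mathcal U$ realizing $\piup\chiup(\mathcal U)=\mathfrak r$ together with a $\piup$-base $\mathcal B$ of that size, and check that $\mathcal B$ is itself unreaped: for any $x$, if $x\in\mathcal U$ then some $B\in\mathcal B$ has $B\subseteq^* x$, while if $\omega\setminus x\in\mathcal U$ then some $B\in\mathcal B$ has $B\subseteq^*\omega\setminus x$, i.e.\ $B\cap x=^*\emptyset$.

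The substantial direction is $\mathfrak r\le\mathfrak r'$: given an unreaped $\mathcal R$ with $|\mathcal R|=\mathfrak r'$, I must build a non-principal ultrafilter of $\piup$-character at most $|\mathcal R|$. I would first replace $\mathcal R$ by the family of all infinite finite intersections of its members, which is still unreaped and of the same cardinality, so that $\mathcal R$ becomes closed under finite intersections. Call $S\subseteq\omega$ \emph{$\mathcal R$-small} if no $r\in\mathcal R$ has $r\subseteq^* S$. It suffices to find a non-principal ultrafilter $\mathcal U$ containing no $\mathcal R$-small set, for then every $U\in\mathcal U$, not being $\mathcal R$-small, almost-contains some $r\in\mathcal R$, so $\mathcal R$ is a $\piup$-base of $\mathcal U$ and $\mathfrak r\le\piup\chiup(\mathcal U)\le|\mathcal R|=\mathfrak r'$. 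Such an ultrafilter exists precisely when the complements of the $\mathcal R$-small sets (which include all cofinite sets, so non-principality is automatic) have the strong finite intersection property, i.e.\ precisely when no finite union of $\mathcal R$-small sets is cofinite. I would prove this last statement by induction on the number of sets in the union: the one-set case is immediate from $\mathcal R\neq\emptyset$, and in the inductive step, starting from a hypothetical cofinite (and, without loss of generality, disjoint) union $S_1\cup\cdots\cup S_n$, I would apply unreapedness to carefully chosen sets built from the $S_i$ and then intersect the resulting member of $\mathcal R$ --- using closure under finite intersections --- with a member already known to lie almost inside $S_1\cup\cdots\cup S_{n-1}$, so as to produce a member of $\mathcal R$ almost inside a single $S_i$ and contradict its $\mathcal R$-smallness. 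Minimality of $\mathcal R$ is likely needed to steer this localization: for each $r\in\mathcal R$ there is a set $x_r\supseteq^* r$ that is reaped by $r$ alone, which pins down how the other members of $\mathcal R$ split $x_r$.

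The hard part is exactly this inductive step. Unreapedness only guarantees, one set $x$ at a time, that some member of $\mathcal R$ sits inside $x$ or inside $\omega\setminus x$, with no information about how that member meets the sets chosen earlier in the induction; so the whole difficulty lies in organizing the combinatorics --- the passage to the finite-intersection closure, the choice of auxiliary sets, and the use of minimality --- so that the witness handed back by unreapedness can always be intersected down into one of the $\mathcal R$-small pieces while remaining infinite. Securing that infinitude is the crux, and it is where the real content of the Balcar--Simon proof lies.
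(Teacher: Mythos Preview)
The paper does not prove this theorem; it is stated with a citation to Balcar--Simon and no argument is given. There is therefore nothing in the paper to compare your proposal against.

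On the substance of your sketch: the inequality $\mathfrak r'\le\mathfrak r$ is correct and complete as you wrote it. For the hard direction your reduction is also correct --- an ultrafilter disjoint from the $\mathcal R$-small sets exists precisely when no finite union of $\mathcal R$-small sets is cofinite, and the case $n=2$ follows immediately from one application of the reaping property. But you have not actually carried out the inductive step for $n\ge 3$, and you say so yourself. The tools you gesture at (passing to the closure under infinite finite intersections, exploiting minimality via the witnesses $x_r$) are reasonable ingredients, but the sentence ``apply unreapedness to carefully chosen sets \ldots\ and then intersect the resulting member of $\mathcal R$'' hides exactly the difficulty you name at the end: reaping hands you \emph{some} $r'\in\mathcal R$, with no control over how $r'$ meets the $r$ obtained at the previous step, so there is no guarantee that $r\cap r'$ is infinite. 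Note also that your closure is not literally closed under finite intersections (you kept only the infinite ones), so ``intersect and stay in $\mathcal R$'' is not automatic. As written this is an honest outline that correctly isolates where the work is, but the combinatorial core --- the actual Balcar--Simon argument --- is still missing.
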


We also need a variant of the ultrafilter number,
let $\mathfrak u^*$ be the cardinal
$\min \set{ \chiup(\mathcal U) \mid \mathcal U
		\text{ is a non-principal ultrafilter such that
		} \chiup(\mathcal U) = \piup\chiup(\mathcal U) }$.\\
The existence of an ultrafilter satisfying
$\chiup(\mathcal U) = \piup\chiup(\mathcal U)$ is unclear in general, 
if no such ultrafilter exists, we declare $\mathfrak u^*$ to be the continuum~$\mathfrak c$.
Bell and Kunen~\cite{bell-kunen} proved that there is always an ultrafilter $\mathcal U$ 
such that $\piup\chiup(\mathcal U) = \cof \mathfrak c$, 
therefore the following question is open only in case the continuum is a singular cardinal.

\begin{question}
	Does \textsf{ZFC} imply the existence of an ultrafilter $\mathcal U$
	such that $\chiup(\mathcal U) = \piup\chiup(\mathcal U)$?
\end{question}

\begin{observation}
	$\mathfrak{r} \le \mathfrak{u} \le \mathfrak{u}^*$.
	If $\mathfrak{r} = \mathfrak{u}$, then $\mathfrak{u}^* = \mathfrak{u}$.
\end{observation}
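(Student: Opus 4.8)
The statement to prove is: $\mathfrak{r} \le \mathfrak{u} \le \mathfrak{u}^*$, and if $\mathfrak{r} = \mathfrak{u}$ then $\mathfrak{u}^* = \mathfrak{u}$.

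For $\mathfrak{r} \le \mathfrak{u}$: any non-principal ultrafilter $\mathcal U$ has $\piup\chiup(\mathcal U) \le \chiup(\mathcal U)$, since every base is in particular a $\piup$-base (a base $\mathcal B \subset \mathcal U$ trivially witnesses $\piup\chiup$, because any $B \in \mathcal B$ satisfies $B \subset^* U$ for all $U$... wait, that's not what a base does). Let me reconsider: actually a base $\mathcal B$ of $\mathcal U$ has the property that every $U \in \mathcal U$ contains some $B \in \mathcal B$ with $B \subset^* U$ — hmm, the footnote's definition of $\piup$-base is "there exists some $B \in \mathcal B$, $B \subset^* U$ for each $U \in \mathcal U$", which I read as: for each $U \in \mathcal U$ there exists $B \in \mathcal B$ with $B \subset^* U$. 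A base is a $\piup$-base contained in $\mathcal U$. So trivially $\piup\chiup(\mathcal U) \le \chiup(\mathcal U)$, hence taking minima over all $\mathcal U$ gives $\mathfrak r \le \mathfrak u$.

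For $\mathfrak{u} \le \mathfrak{u}^*$: this is immediate since $\mathfrak u^*$ is a minimum of $\chiup(\mathcal U)$ over a subclass of all non-principal ultrafilters (those with $\chiup = \piup\chiup$), or equals $\mathfrak c \ge \mathfrak u$ if that subclass is empty. Either way $\mathfrak u \le \mathfrak u^*$.

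For the last implication: suppose $\mathfrak r = \mathfrak u$. Pick an ultrafilter $\mathcal U$ witnessing $\mathfrak u$, i.e. $\chiup(\mathcal U) = \mathfrak u$. Then $\piup\chiup(\mathcal U) \le \chiup(\mathcal U) = \mathfrak u = \mathfrak r \le \piup\chiup(\mathcal U)$, the last inequality by definition of $\mathfrak r$ as a minimum. So $\piup\chiup(\mathcal U) = \chiup(\mathcal U)$, meaning $\mathcal U$ belongs to the subclass defining $\mathfrak u^*$, whence $\mathfrak u^* \le \chiup(\mathcal U) = \mathfrak u$. Combined with $\mathfrak u \le \mathfrak u^*$ this gives $\mathfrak u^* = \mathfrak u$.

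There is no real obstacle here — the whole thing is an unwinding of definitions, and the only mild subtlety is the edge case where no ultrafilter satisfies $\chiup = \piup\chiup$, but in that situation $\mathfrak r = \mathfrak u$ would force some $\mathfrak u$-witnessing $\mathcal U$ into that class, so the class is nonempty and the convention $\mathfrak u^* = \mathfrak c$ never gets invoked. I would simply write this out in four or five lines.
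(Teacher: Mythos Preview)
Your argument is correct. The paper itself gives no proof of this observation --- it is stated as immediate from the definitions --- and your unwinding is exactly the expected one: a base is a $\piup$-base, the minimum over a subclass dominates the minimum over the whole class, and if $\mathfrak r = \mathfrak u$ then any $\mathcal U$ with $\chiup(\mathcal U)=\mathfrak u$ satisfies $\piup\chiup(\mathcal U)=\chiup(\mathcal U)$ and hence witnesses $\mathfrak u^*\le\mathfrak u$.
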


We say that $\mathcal X \subset {[\omega]}^\omega$ is an independent system
if for every function $f \colon \mathcal X \to 2$ is the family
$\set{ a^{f(a)} \mid a \in \mathcal X }$ centered.
An independent system is maximal if it is maximal with respect to inclusion.
The independence number $\mathfrak i$ is the minimal cardinality
of a maximal independent system.
Although the definitions of a maximal independent system and a maximal free sequence
are somewhat similar, we know very little about the relations between these objects
and the relation between the cardinal characteristics $\mathfrak i$ and $\mathfrak f$.

A strictly $\subset^*$-decreasing sequence in ${[\omega]}^\omega$ is always a free sequence.
Maximal such decreasing sequences (with respect to end-extension) are called towers,
the smallest cardinality of a tower is the tower number~$\mathfrak t$.
A tower does not need to be a maximal free sequence.
On the other hand if a free sequence generates an ultrafilter,
then it is maximal.
This observation allows us to deduce that there are maximal free sequences
of ordinal length $\omega_1$ in the Miller model as it contains such towers
which generate ultrafilters~\cite{Miller-rational}.
In particular, the Miller model demonstrates the consistency of
$\omega_1 = \mathfrak u = \mathfrak f < \mathfrak i = \mathfrak c = \omega_2$.

\begin{question}
	Is $\mathfrak{i} < \mathfrak{f}$ consistent with \textsf{ZFC}?
\end{question}

The first part of the following proposition is already in~\cite{monk}. 

\begin{proposition}
	$\mathfrak{r} \le \mathfrak{f} \le \mathfrak{u}^*$
\end{proposition}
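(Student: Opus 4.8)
**Proof proposal for $\mathfrak{r} \le \mathfrak{f} \le \mathfrak{u}^*$.**

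The plan is to establish the two inequalities separately. For the upper bound $\mathfrak{f} \le \mathfrak{u}^*$, I would start from an ultrafilter $\mathcal{U}$ witnessing $\mathfrak{u}^*$, i.e.\ with $\chiup(\mathcal{U}) = \piup\chiup(\mathcal{U}) = \mathfrak{u}^*$ (the case where no such ultrafilter exists is trivial since then $\mathfrak{u}^* = \mathfrak{c}$ and any maximal free sequence has size at most $\mathfrak{c}$). The idea is to build a free sequence of length $\mathfrak{u}^*$ that generates $\mathcal{U}$, since a free sequence generating an ultrafilter is automatically maximal (as noted in the excerpt). Fix a base $\set{u_\alpha \mid \alpha < \mathfrak{u}^*}$ of $\mathcal{U}$ and construct the sequence $\la a_\alpha \mid \alpha < \mathfrak{u}^* \ra$ recursively: at stage $\alpha$, having built a free sequence whose elements together with a fixed well-ordering of the base so far generate an initial part of $\mathcal{U}$, I would pick $a_\alpha \in \mathcal{U}$ refining enough of the previously listed base elements. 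The centeredness requirement for free sequences is that for every $\beta$, the sets $a_\alpha$ for $\alpha < \beta$ together with the complements $a_\alpha^0$ for $\alpha \ge \beta$ are centered; since all $a_\alpha \in \mathcal{U}$, the real content is that the complements $a_\alpha^0$ (for the tail) are compatible with finite intersections of the head and of $\mathcal{U}$, which one arranges by choosing each $a_\alpha$ so that $a_\alpha^0$ is large — here is where one uses that a base of size $\mathfrak{u}^*$ equals a $\piup$-base of that size, allowing the "tail complements" to be threaded through. The point of the $\piup$-character condition is precisely to guarantee that the construction does not get stuck: one needs the cofinal refining to be possible with only $\mathfrak{u}^*$ many steps while keeping all partial complements centered with the filter.

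For the lower bound $\mathfrak{r} \le \mathfrak{f}$, I would take a maximal free sequence $A = \la a_\alpha \mid \alpha < \mathfrak{f} \ra$ and show that the collection $\comb{A}$ of admissible intersections, or a suitable refinement of it, is a reaping family in the sense of the Balcar--Simon characterization quoted above — or rather that from $A$ one extracts a family $\mathcal{R}$ of size at most $\mathfrak{f}$ such that for every $x \subset \omega$ some $r \in \mathcal{R}$ has $r \subset^* x$ or $r \cap x =^* \emptyset$. Given $x$, suppose no element of $\comb{A}$ is almost contained in $x$ or almost disjoint from $x$; the goal is to derive that $A \conc \la x \ra$ is again a free sequence, contradicting maximality. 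Maximality of $A$ fails exactly when there is some $x$ extending it to a free sequence, and the negation of the free-sequence condition for $A \conc \la x \ra$ (at the last coordinate $\gamma = \mathfrak{f}$, and at each $\beta \le \gamma$) translates into: for every $\beta$, either $x$ is incompatible with some finite intersection from the head-with-tail-complements, or $x^0$ is incompatible with such. Unwinding, the obstruction to extending by $x$ is that some admissible intersection $c \in \comb{A}$ has $c \cap x =^* \emptyset$ or $c \cap x^0 =^* \emptyset$, i.e.\ $c \subset^* x^0$ or $c \subset^* x$; so $\comb{A}$ already reaps $x$ (after relabeling $x \leftrightarrow x^0$). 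Since $\card{\comb{A}} \le \mathfrak{f}$ (it is indexed by pairs of finite subsets of $\mathfrak{f}$, hence has size $\mathfrak{f}$ for infinite $\mathfrak{f}$, and $\mathfrak{f} \ge \omega_1$), this gives a reaping family of size $\mathfrak{f}$, so $\mathfrak{r} \le \mathfrak{f}$.

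The main obstacle I expect is the upper bound: verifying that the recursive construction of the free sequence generating $\mathcal{U}$ can actually be carried out in $\mathfrak{u}^*$ steps while maintaining centeredness of all the head-tail splits simultaneously. One has to be careful that choosing $a_\alpha \in \mathcal{U}$ to refine old base elements does not destroy the centeredness condition for earlier values of $\beta$, which requires that each $a_\alpha^0$ remain compatible with every finite intersection $\bigcap_{\eta < \beta} a_\eta \cap \bigcap_{\beta \le \xi,\ \xi \ne \alpha} a_\xi^0$; bookkeeping this needs the $\piup$-base hypothesis to ensure enough "room" below each successive stage. The lower-bound argument is comparatively routine once one correctly unwinds the definition of maximality in terms of $\comb{A}$, but care is needed with the almost-inclusion ($\subset^*$) versus genuine-inclusion subtleties and with the relabeling $x \leftrightarrow x^0$ so that one lands in the exact form of the Balcar--Simon criterion.
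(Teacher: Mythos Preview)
Your approach matches the paper's in both halves. The lower bound is complete and is simply the contrapositive of the paper's argument, phrased via the reaping-family characterization of $\mathfrak{r}$ rather than the $\piup$-character one: if $A$ is maximal then for each $x$ the sequence $A\conc\la x\ra$ fails to be free, and unwinding this gives some $c \in \comb{A}$ with $c \subset^* x$ or $c \cap x =^* \emptyset$, so $\comb{A}$ reaps.

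For the upper bound you correctly identify the obstacle but stop short of resolving it; the explicit step the paper supplies is this. At stage $\beta$ one has $\card{\comb{A_\beta}} < \mathfrak{u}^* = \piup\chiup(\mathcal U)$, so $\comb{A_\beta}$ is not a $\piup$-base of $\mathcal U$; hence there exists $b_\beta \in \mathcal U$ with $c \not\subset^* b_\beta$ (equivalently $b_\beta^0 \cap c$ infinite) for every $c \in \comb{A_\beta}$, and one sets $a_\beta = b_\beta \cap u_\beta \in \mathcal U$. The worry you raise about earlier splits $\beta' < \beta$ is then automatic: each such split contributes elements of $\comb{A_\beta}$, and the single requirement that $a_\beta^0$ meet every $c \in \comb{A_\beta}$ handles them all at once, so no further bookkeeping is needed.
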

\begin{proof}
	First assume that $A$ is a free sequence of size smaller than $\mathfrak r$.
	Let $\mathcal U$ be a non-principal ultrafilter extending $A$,
	$\comb{A}$ is not a $\piup$-base of $\mathcal U$ as it is of size $< \mathfrak r$.
	Choose $a \in \mathcal U$ such that $a^0 \cap c$ is infinite for each $c \in \comb{A}$.
	Now $A\conc \la a \ra$ is a free sequence and the first inequality is proved.

	Assuming $\mathfrak u^* < \mathfrak c$, let $\set{ u_\alpha \mid \alpha < \mathfrak u^* }$
	be a base of an ultrafilter $\mathcal U$ such that
	$\piup\chiup(\mathcal U) = \chiup(\mathcal U)$.
	Using induction on $\alpha$ we can define a free sequence
	$\la a_\alpha \mid \alpha < \mathfrak u^* \ra$.
	Start with $a_0 = u_0$. If $A_\beta = \la a_\alpha \mid \alpha < \beta \ra$
	is defined, use $\card{\comb{A_\beta}} < \piup\chiup(\mathcal U)$ to find
	$b_\beta \in \mathcal U$ such that $b^0 \cap c$ is infinite for each $c \in \comb {A_\beta}$.
	Let $a_\beta = b_\beta \cap u_\beta$, notice that $A_\beta \conc \la a_\beta \ra$
	is a free sequence.
	Finally, the constructed free sequence is a base of the ultrafilter $\mathcal U$
	and hence it is maximal.
\end{proof}

\begin{corollary}\label{cor:if-r=u}
	If $\mathfrak{r} = \mathfrak{u}$, then $\mathfrak{f} = \mathfrak{u} = \mathfrak{r}$.
\end{corollary}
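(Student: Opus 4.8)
The plan is to deduce this purely by chaining together the Observation and the Proposition that immediately precede it; no new construction is needed. Assume $\mathfrak{r} = \mathfrak{u}$. By the second assertion of the Observation this already gives $\mathfrak{u}^* = \mathfrak{u}$, so the two bounds furnished by the Proposition now coincide: from $\mathfrak{r} \le \mathfrak{f} \le \mathfrak{u}^*$ we get $\mathfrak{r} \le \mathfrak{f} \le \mathfrak{u}^* = \mathfrak{u} = \mathfrak{r}$, hence $\mathfrak{f} = \mathfrak{u} = \mathfrak{r}$, which is exactly the claim. Since this is a one-line inequality chase, there is no real obstacle here — the actual work has already been carried out in proving the Observation and the Proposition.

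If one wanted a self-contained argument instead of invoking the Observation, one could re-run the second half of the Proposition's proof directly. Under $\mathfrak{r} = \mathfrak{u}$, fix an ultrafilter $\mathcal{U}$ with $\chiup(\mathcal{U}) = \mathfrak{u}$; then $\mathfrak{r} \le \piup\chiup(\mathcal{U}) \le \chiup(\mathcal{U}) = \mathfrak{u} = \mathfrak{r}$ shows $\piup\chiup(\mathcal{U}) = \chiup(\mathcal{U}) = \mathfrak{u}$, so there is a base $\set{ u_\alpha \mid \alpha < \mathfrak{u} }$ of $\mathcal{U}$ along which the same transfinite recursion goes through: at stage $\beta$ use $\card{\comb{A_\beta}} < \piup\chiup(\mathcal{U})$ to pick $b_\beta \in \mathcal{U}$ with $b_\beta^0 \cap c$ infinite for every $c \in \comb{A_\beta}$ and set $a_\beta = b_\beta \cap u_\beta$. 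The resulting free sequence of length $\mathfrak{u}$ is a base of $\mathcal{U}$, hence maximal, giving $\mathfrak{f} \le \mathfrak{u}$; combined with $\mathfrak{r} \le \mathfrak{f}$ (the first inequality of the Proposition) and $\mathfrak{r} = \mathfrak{u}$ this again yields $\mathfrak{f} = \mathfrak{u} = \mathfrak{r}$. Either way, the statement is an immediate corollary, and the short proof via the Observation is the one I would write down.
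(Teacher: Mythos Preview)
Your argument is correct and is exactly what the paper intends: the corollary is stated without proof precisely because it follows by chaining the Observation (giving $\mathfrak{u}^* = \mathfrak{u}$) with the Proposition's inequalities $\mathfrak{r} \le \mathfrak{f} \le \mathfrak{u}^*$. Your optional self-contained version is also fine but unnecessary.
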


\begin{question}
	Is $\mathfrak{r} < \mathfrak{f}$ consistent with \textsf{ZFC}?
	What about $\mathfrak{u} < \mathfrak{f}$?
\end{question}

The natural candidate for a model satisfying $\mathfrak{r} < \mathfrak{f}$
is the model constructed in~\cite{goldstern-shelah}.
Corollary~\ref{cor:if-r=u} presents a substantial obstacle
when constructing a model where $\mathfrak u < \mathfrak f$.
In such model necessarily $\mathfrak r < \mathfrak u < \mathfrak f$ holds,
and this cannot be achieved using the usual technique
of countable support forcing iteration.

The next proposition generalizes a property of decreasing sequences to arbitrary free sequences.

\begin{proposition}\label{prop:notUf}
	Let $A = \la a_\alpha \mid \alpha < \gamma \ra$ be a free sequence
	and $\cf \gamma < \mathfrak t$. Then the free sequence $A$ does not generate an ultrafilter.
\end{proposition}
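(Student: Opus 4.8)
The plan is to show directly that the filter $\Phi$ generated by $A$ is not an ultrafilter, by exhibiting a set that \emph{splits} it. The first move is the routine observation that, since $\set{\bigcap_{\alpha\in\Gamma}a_\alpha\mid\Gamma\in{[\gamma]}^{<\omega}}$ is a base of $\Phi$, the filter $\Phi$ fails to be an ultrafilter as soon as there is an infinite $a\subseteq\omega$ such that both $a\cap\bigcap_{\alpha\in\Gamma}a_\alpha$ and $(\omega\setminus a)\cap\bigcap_{\alpha\in\Gamma}a_\alpha$ are infinite for every finite $\Gamma\subseteq\gamma$; that is, $a$ and $\omega\setminus a$ are both $\Phi$-positive. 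So the whole task reduces to constructing such an $a$ under the assumption $\cf\gamma<\mathfrak{t}$.

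Next I would fix an increasing cofinal sequence $\sq{\gamma_\xi\mid\xi<\delta}$ in $\gamma$, with $\delta=\cf\gamma<\mathfrak{t}$, and write $\Phi_\xi$ for the filter generated by $\set{a_\alpha\mid\alpha<\gamma_\xi}$, so that $\Phi=\bigcup_{\xi<\delta}\Phi_\xi$ is an increasing union. The one place freeness enters is the observation that each restriction $A\restriction\gamma_\xi$ has a \emph{single} witness of non-maximality, namely $a_{\gamma_\xi}$: centeredness of $A$ gives that $a_{\gamma_\xi}\cap\bigcap_{\alpha\in\Gamma}a_\alpha=\bigcap_{\alpha\in\Gamma\cup\set{\gamma_\xi}}a^1_\alpha$ is infinite for all finite $\Gamma\subseteq\gamma$, while the clause of the definition of a free sequence at $\beta=\gamma_\xi$ gives that $a^0_{\gamma_\xi}\cap\bigcap_{\alpha\in\Gamma}a_\alpha$ is infinite for all finite $\Gamma\subseteq\gamma_\xi$. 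Thus $a_{\gamma_\xi}$ splits $\Phi_\xi$, and since $\Phi_\eta\subseteq\Phi_\xi$ for $\eta\le\xi$, it even splits every $\Phi_\eta$ with $\eta\le\xi$; in particular each $\Phi_\xi$ is a proper non-ultrafilter.

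The remaining work is to amalgamate the local splitters $a_{\gamma_\xi}$ into one $a$ splitting all $\Phi_\xi$ at once (such an $a$ automatically splits $\Phi=\bigcup_\xi\Phi_\xi$). I would do this by transfinite recursion of length $\delta$, building an increasing chain of disjoint infinite sets $P_\xi,Q_\xi$ (the parts of $a$, resp.\ $\omega\setminus a$, decided so far) with the invariant that $P_\xi$ and $Q_\xi$ are $\Phi_\eta$-positive for all $\eta<\xi$: at a successor step one uses $a_{\gamma_\xi}$ to feed a new $\Phi_\xi$-positive set into $P$ and a disjoint $\Phi_\xi$-positive set into $Q$, drawn from the as-yet-undecided coordinates; at a limit stage one takes unions and then passes to a pseudo-intersection of the decreasing sequence of ``reservoirs'' of undecided coordinates — legitimate because that sequence has length $<\mathfrak{t}=\mathfrak{p}$, hence an infinite pseudo-intersection — so as to keep room for the remaining $\Phi_\eta$'s. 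Setting $a:=\bigcup_{\xi<\delta}P_\xi$, one has $a\supseteq P_{\xi+1}$ and $\omega\setminus a\supseteq Q_{\xi+1}$ with $P_{\xi+1},Q_{\xi+1}$ both $\Phi_\xi$-positive, so $a$ splits each $\Phi_\xi$ and hence $\Phi$.

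The step I expect to be the real obstacle is the limit-stage bookkeeping in the last paragraph: one must design the successor steps (how much is committed to $P$ and $Q$, and which coordinates stay undecided) carefully enough that, after passing to a pseudo-intersection of the reservoirs, all three invariants survive — $P$ and $Q$ stay disjoint, the positivities already achieved are not destroyed, and an infinite reservoir remains — and it is precisely this balancing act that forces the hypothesis $\cf\gamma<\mathfrak{t}$ rather than, say, $|\gamma|<\mathfrak{t}$. By contrast the reduction in the first paragraph is a formality, and the use of freeness in the second paragraph is confined to the single clause $\beta=\gamma_\xi$ of the definition.
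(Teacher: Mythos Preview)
Your approach is quite different from the paper's. The paper gives a five-line topological argument: having fixed the cofinal sequence $\sq{\gamma_i\mid i<\cf\gamma}$, for each $i$ choose an ultrafilter $\mathcal U_i$ extending the \emph{full} centered family $\set{a_\alpha\mid\alpha<\gamma_i}\cup\set{a_\alpha^0\mid\gamma_i\le\alpha<\gamma}$ (so the paper uses freeness at every $\beta$, not just at $\beta=\gamma_\xi$). If $A$ generated an ultrafilter $\mathcal U$, then $\sq{\mathcal U_i}$ would converge to $\mathcal U$ in the Stone space $\omega^*$ --- every basic clopen neighbourhood $\widehat{\bigcap_{\alpha\in F}a_\alpha}$ of $\mathcal U$ contains $\mathcal U_i$ once $F\subset\gamma_i$ --- while $\mathcal U_i\neq\mathcal U$ since $a_{\gamma_i}^0\in\mathcal U_i$. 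This contradicts the folklore fact that $\omega^*$ admits no nontrivial convergent sequence of length $<\mathfrak t$. The paper thus replaces your explicit amalgamation by an appeal to Stone duality plus a known cardinal-invariant fact; no bookkeeping is required at all.

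Your combinatorial route has a genuine gap beyond the one you flag. The invariant ``$P_\xi,Q_\xi$ are $\Phi_\eta$-positive for all $\eta<\xi$'' is too weak even for the \emph{successor} step: to enlarge $P_\xi$ to something $\Phi_\xi$-positive using only undecided coordinates you need $R_\xi\cap a_{\gamma_\xi}$ (or some subset of $R_\xi$) to be $\Phi_\xi$-positive, and nothing in your hypothesis prevents, say, $R_\xi\subseteq^* a_{\gamma_\xi}^0$. Moreover your reservoirs are literally $\subseteq$-decreasing, so ``pass to a pseudo-intersection'' at a limit just means ``intersect''; if every successor step commits an infinite set to each of $P$ and $Q$, the reservoir is empty already at stage $\omega$, and invoking $\mathfrak p=\mathfrak t$ does not help. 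To make the recursion run you would need a much stronger running hypothesis on $R_\xi$ --- essentially that it stays positive for \emph{all} the filters generated by $\set{a_\alpha\mid\alpha<\gamma_\eta}\cup\set{a_\alpha^0\mid\alpha\ge\gamma_\eta}$ with $\eta\ge\xi$ --- and preserving that at limits is where the real work (and the genuine use of $\cf\gamma<\mathfrak t$) would lie. For $\cf\gamma=\omega$ there is a cleaner direct construction that bypasses all of this: the sets $c_n=a_{\gamma_n}^0\cap\bigcap_{m<n}a_{\gamma_m}$ are pairwise disjoint and freeness gives that $c_m\cap\bigcap_{\alpha\in F}a_\alpha$ is infinite whenever $F\in[\gamma_m]^{<\omega}$, so $X=\bigcup_{n\text{ even}}c_n$ already splits $\Phi$.
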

\begin{proof}
	Let $\la \gamma_i \mid i \in \cf \gamma \ra$ be a sequence of ordinals
	cofinal in $\gamma$. For $ i \in \cf \gamma$ choose an ultrafilter  $\mathcal U_i$
	extending the centered family
	$\set{ a_\alpha \mid \alpha < \gamma_i } \cup \set{ a_\alpha^0 \mid \gamma_i \leq \alpha < \gamma }$.
	If $\mathcal U$ is an ultrafilter extending $A$,
	then $\la \mathcal U_i \mid i \in \cf \gamma \ra$
	is a sequence in the Stone space
	converging to $\mathcal U$, a contradiction with $\cf \gamma < \mathfrak t$.
\end{proof}

In fact, the same argument can be used to prove to following,
presumably well known fact.

\begin{observation}
	Let $\mathcal X$ be an independent system and $f \colon \mathcal X \to 2$ any function.
	Then $\mathcal X_f = \set{ a^{f(a)} \mid a \in \mathcal X }$ does not generate an ultrafilter.
\end{observation}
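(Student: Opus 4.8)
The plan is to mimic the proof of Proposition~\ref{prop:notUf}, producing a nontrivial convergent sequence of length~$\omega$ in the Stone space of $\pw{\omega}/\fin$, which contradicts $\omega < \mathfrak{t}$.

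First dispose of the degenerate case: if $\mathcal X$ is finite, then centeredness of $\mathcal X_f$ makes $b = \bigcap_{a \in \mathcal X} a^{f(a)}$ an infinite set, and $\mathcal X_f$ generates the filter $\set{ x \mid b \subset^* x }$, which is not an ultrafilter since $b$ splits into two infinite pieces. So assume $\mathcal X$ is infinite and, towards a contradiction, that $\mathcal X_f$ generates an ultrafilter $\mathcal U$. Observe that every $u \in \mathcal U$ satisfies $\bigcap_{a \in F} a^{f(a)} \subset^* u$ for some $F \in {[\mathcal X]}^{<\omega}$, since $\mathcal U$ is just the upward $\subset^*$-closure of the finite intersections of members of $\mathcal X_f$.

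Fix distinct $b_n \in \mathcal X$ for $n \in \omega$. For each $n$ let $g_n \colon \mathcal X \to 2$ agree with $f$ except that $g_n(b_n) = 1 - f(b_n)$; by independence the family $\set{ a^{g_n(a)} \mid a \in \mathcal X }$ is centered, so fix an ultrafilter $\mathcal U_n$ extending it. Since $b_n^{1-f(b_n)} \in \mathcal U_n$ while $b_n^{f(b_n)} \in \mathcal U$, and these sets are complementary, $\mathcal U_n \neq \mathcal U$. Moreover $\la \mathcal U_n \mid n \in \omega \ra$ converges to $\mathcal U$: given $u \in \mathcal U$, pick $F \in {[\mathcal X]}^{<\omega}$ with $\bigcap_{a \in F} a^{f(a)} \subset^* u$; for every $n$ with $b_n \notin F$ we have $a^{g_n(a)} = a^{f(a)} \in \mathcal U_n$ for all $a \in F$, hence $\bigcap_{a \in F} a^{f(a)} \in \mathcal U_n$ and so $u \in \mathcal U_n$; as the $b_n$ are distinct, this holds for all but finitely many $n$. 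Thus $\la \mathcal U_n \mid n \in \omega \ra$ is a sequence of length $\omega < \mathfrak t$ in the Stone space converging to the point $\mathcal U$ which differs from all its terms, a contradiction exactly as in Proposition~\ref{prop:notUf}.

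There is essentially no obstacle beyond the two bookkeeping points above: excluding the case of finite $\mathcal X$, and recording that a convergent sequence of length $\omega$ already suffices since $\mathfrak t \geq \omega_1$.
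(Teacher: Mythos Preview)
Your proof is correct and follows essentially the same idea as the paper's: the paper simply observes that any enumeration of $\mathcal X_f$ with cofinality~$\omega$ is a free sequence (which holds by independence) and then applies Proposition~\ref{prop:notUf} as a black box, whereas you unpack that proposition's Stone-space convergence argument directly, choosing the auxiliary ultrafilters by flipping $f$ at a single coordinate rather than on a tail. The underlying mechanism is identical.
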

\begin{proof}
	If $\mathcal X$ is finite, the statement is trivially true. 
	If $\mathcal X$ is infinite, then $\mathcal X_f$ can be ordered
	with an order type of cofinality $\omega$, and then use Proposition~\ref{prop:notUf}.
\end{proof}

The maximal free sequences constructed so far generate ultrafilters.
The next proposition shows an elementary example demonstrating
that this does not need to be the case for a general free sequence.

\begin{proposition}\label{prop:nonUF}
	For any given maximal free sequence
	there exists a maximal free sequence of the same cardinality
	which does not generate an ultrafilter.
\end{proposition}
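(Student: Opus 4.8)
The plan is to dispose first of the trivial case and then build a new sequence by splitting $\omega$. If the given maximal free sequence $A=\la a_\alpha\mid\alpha<\gamma\ra$ does not itself generate an ultrafilter, take $B=A$ and we are done. So assume $A$ generates a non-principal ultrafilter $\mathcal U$; then $\gamma$ is infinite, so any sequence obtained from $A$ by adding or modifying $|\gamma|$-many terms still has cardinality $|A|$. Fix a partition $\omega=E\cup O$ into infinite pieces with $E\notin\mathcal U$ (so $O\in\mathcal U$) chosen so that $O$ meets every member of $\comb{A}$ infinitely; such a choice exists because maximality of $A$ need not be witnessed by an admissible intersection lying inside $E$, and there are at most $|A|$ admissible intersections to meet. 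The goal is a maximal free sequence $B$ of cardinality $|A|$ whose generated filter decides neither $E$ nor $O$; such a $B$ cannot generate an ultrafilter.

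For the construction I would keep the whole combinatorial content of $A$ but distribute it so that $E$ stays undecided. The first approximation is the length-$\gamma$ sequence $\la a_\alpha\cup E\mid\alpha<\gamma\ra$. Its complementary terms are $a_\alpha^0\cap O\subset O$; its positive finite intersections are $(\bigcap a_\alpha)\cup E$, which contain $E$ but meet $O$ (as $\bigcap a_\alpha\in\mathcal U$ and $O\in\mathcal U$), so neither $E$ nor $O$ lies in the generated filter. Every admissible intersection involving a complementary term collapses to $O\cap s$ with $s\in\comb{A}$, and the condition that $O$ meets each such $s$ infinitely is exactly what makes this sequence free. Thus this block already keeps $E$ undecided and, by creating admissible intersections contained in $O$, blocks every end-extension by $O$. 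To reach maximality one must symmetrically manufacture admissible intersections contained in $E$, so I would append a second block carrying a copy of $A$ on the $E$-side; here the delicate point is that its complementary terms must stay centered, at the initial cut $\beta=0$, together with the $O$-concentrated complements of the first block, which forbids the naive choice of terms confined to $E$ and forces the linking parameters of the second block to be threaded through $\mathcal U$.

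Granting such a $B$, freeness is verified cut by cut: every initial segment reduces, via the collapse above, to an admissible intersection of $A$ (inside $O$ for cuts in the first block, inside $E$ for cuts in the second), which is infinite because $A$ is free, the only genuinely delicate cuts being the initial ones where all complementary terms appear simultaneously. Non-generation of an ultrafilter is then immediate from the undecidedness of $E$. The hard part will be maximality. The plan there is to exploit that $A$ is maximal on each half: from a hypothetical end-extension $c$ of $B$ one extracts, through the copy of $A$ sitting on $O$ (respectively $E$), an element witnessing that $A$ itself admits an end-extension, a contradiction. Making this projection argument precise while simultaneously preserving freeness at the initial cut — the very point where the ultrafilter-rigidity of $\mathcal U$ obstructs the obvious disjoint-support constructions — is the crux of the proof.
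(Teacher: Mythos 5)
Your proposal has two genuine gaps, one at the very first step and one at the admitted ``crux.'' First, the partition $\omega = E \cup O$ is not known to exist. You need $O \in \mathcal U$ coinfinite such that $O \cap s$ is infinite for every $s \in \comb{A}$; equivalently, an infinite $E$ that almost contains no admissible intersection (the requirement $E \notin \mathcal U$ then comes for free, since $E \in \mathcal U$ would give $\bigcap_{\alpha \in \Gamma} a_\alpha \subset^* E$ for some finite $\Gamma$, and that intersection lies in $\comb{A}$). Your justification is only a counting remark, and it works when $\card{A} < \mathfrak c$: in an almost disjoint family of size $\mathfrak c$, each $s \in \comb{A}$ can be almost contained in at most one member, so some member is the desired $E$. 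But $\gamma$ may have cardinality $\mathfrak c$, in which case $\comb{A}$ has size $\mathfrak c$ and could in principle be $\subset^*$-dense below some infinite set; nothing you say rules this out. Note also that no infinite set is almost disjoint from \emph{all} of $\comb{A}$ (which contains both $a_0$ and $a_0^0$), so the existence of your $E$ is a reaping-type assertion, not a cardinality one.

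Second, the tail block is never constructed, and the obstruction you identify is misdiagnosed. If its terms were sets $d_\xi \subseteq E$, centeredness at the cut $\beta = 0$ would actually be unproblematic: $d_\xi^0 \supseteq O$, so the intersection of all the complements collapses to $O \cap \bigcap_{\alpha \in \Delta} a^0_\alpha$, which is infinite by the choice of $O$. What really fails is your stated goal: since the $d_\xi$ sit at the end of the sequence and $d_\xi \subseteq E$, every finite intersection of terms that includes some $d_\xi$ lies inside $E$, so the generated filter \emph{contains} $E$ --- and if the block copies $A$, the sequence generates an ultrafilter again. Hence the tail terms must be spread across both halves, which is exactly where your sketch stops. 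The paper's proof avoids both problems: it never works inside the given ultrafilter, but copies the given maximal free sequence onto the two halves $X$, $Y$ of a partition and \emph{interleaves} the copies with an index shift, $c_{\alpha,i} = a_\alpha \cup b_{\alpha+i}$ in the lexicographic order on $\gamma \times 2$. The shift guarantees that suitable admissible intersections of $C$ are contained in $X$ (respectively $Y$), so maximality transfers directly from maximality of each copy, while neither $X$ nor $Y$ is in the filter; no special choice of partition and no delicate initial-cut analysis is needed. Your plan might be completable along these interleaving lines, but as written the projection-to-$A$ maximality argument and the freeness bookkeeping it requires are missing.
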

\begin{proof}
	We can assume that $\omega = X \cup Y$ for $X, Y$ infinite disjoint,
	and there are maximal free sequences
	$A = \la a_\alpha \subset X \mid \alpha \in \gamma\ra$
	and $B = \la b_\alpha \subset Y \mid \alpha \in \gamma\ra$
	in $\pw{X}$ and $\pw{Y}$ respectively.
	For $\la \alpha, i \ra \in \gamma \times 2$
	let $c_{\alpha, i} = a_{\alpha} \cup b_{\alpha+i}$.
	Considering the lexicographical order on $\gamma \times 2$
	we get a sequence $C = \la c_{\alpha, i} \mid \la\alpha, i\ra \in  \gamma \times 2\ra $.
	This sequence does not generate an ultrafilter as both $X$ and $Y$
	are positive with respect to the filter the sequence generates.
	We claim that $C$ is a maximal free sequence on $\omega$.
	Checking that $C$ is a free sequence is straightforward.
	To verify the maximality, take any $z \subset \omega$.
	If $z$ is not positive with respect to both the filters
	generated by $A$ and $B$, then $C\conc\la z \ra$ is not centered.
	Assume $z$ is positive with respect to the filter
	generated by $A$.  As $A$ is maximal, there are
	$\Gamma < \Delta \in {[\gamma]}^{<\omega}$ such that
	$\set{a_\alpha \mid \alpha \in \Gamma} \cup \set{a^0_\alpha \mid \alpha \in \Delta}
		\cup \set{ z^0 \cap X}$ has only finite intersection.
	We may moreover suppose that there is $\alpha \in \Gamma$
	such that $\alpha+1 \in \Delta$.
	As the intersection of $\set{c_\alpha \mid \alpha \in \Gamma\times 2}
		\cup \set{c^0_\alpha \mid \alpha \in \Delta \times 2}$ is a
	subset of $X$, it has only finite intersection with $z^0$ and
	$C$ cannot be end-extended by $z$.
	The reasoning when $z$ is positive with respect to $B$ is analogous.
\end{proof}

Regarding the proof Proposition~\ref{prop:nonUF},
if the free sequences $A$ and $B$ generate ultrafilters,
we can use similar construction, defining a free sequence
$C = \la c_\alpha = a_{\alpha} \cup b_{\alpha} \mid \alpha \in \gamma \ra \conc \la X \ra $.
This way we get an example of a maximal free sequence such that the order type of
$C$ is not a limit ordinal.

\section{Towards $\mathfrak{i} = \mathfrak{f} < \mathfrak{u}$}\label{sec:prelim}

The rest of the paper is focused on proving that
$\mathfrak{f} < \mathfrak{u}$ is consistent with \textsf{ZFC}.
The model where this holds is the model for $\mathfrak{i} < \mathfrak{u}$
due to Shelah~\cite{shelah-i<u}.
As the original paper is not easy to digest,
we opted to include
the proof. Our original contribution here is only the proof
that $\mathfrak{i} = \mathfrak{f}$ in this model.

Let us start with reviewing some basic terminology and folklore knowledge.
An ideal on $\omega$ is a set $\mathcal I \subset \pw{\omega}$,
such that if $I, J \in \mathcal I$ and $A \subset I$,
then $A \in \mathcal I$ and $I \cup J \in \mathcal I$.
The ideal $\mathcal I$ is proper if $\omega \notin \mathcal I$.
All ideals considered here will be proper ideals on $\omega$
containing all finite subsets of $\omega$.
A filter will generally be a dual of such ideal.
For an ideal $\mathcal I$ we denote the dual filter as $\mathcal I^*$.
We say that $\mathcal K$ is a \emph{co-filter} if $\pw{\omega} \setminus \mathcal K$
is a filter.

For a filter base $\mathcal H \subset \pw{\omega}$ we denote $\sq{\mathcal H}$ the filter
generated by $\mathcal H$,
i.e.\ $F \in \sq{\mathcal H}$ iff $H \subset^* F$ for some $H \in \mathcal H$.
We use the same notation for co-filters generated by a co-filter base,
the intended meaning of the notation should be always apparent from the context.
We will need a folklore classification of filters.
For $A \subseteq \omega$ we denote
$\epsilon_A \colon \omega \to A$ the unique increasing surjection, and
$\overline\epsilon_F \in \omega^\omega$ the function
$\overline\epsilon_F \colon n \mapsto \epsilon_F(n+1) - \epsilon_F(n)$.
Filter $\mathcal F$ is non-meager if the family
$\set{\epsilon_F \mid F \in \mathcal F}$ is unbounded in $(\omega^\omega, <^*)$.
Filter $\mathcal F$ is rare\footnote{
	Rare filters are also called Q-filters.
	We opted for the original terminology of Choquet.
}
if the family
$\set{\overline \epsilon_F \mid F \in \mathcal F}$ is dominating.
Filter $\mathcal F$ is a P-filter if for each $C \in {[\mathcal F]}^\omega$ there
exists $F \in \mathcal F$ such that $F \subset^* X$ for each $X \in C$.

We will use the following standard \emph{diagonal} properties of these filters.

\begin{fact}\label{fact:non-meager}
	Filter $\mathcal F$ is a non-meager P-filter if and only if for each sequence
	$\set{F_n \in \mathcal F \mid n \in \omega}$ there exist $F \in \mathcal F$
	such that $F \setminus n \subset F_n$ for infinitely many $n \in \omega$.
\end{fact}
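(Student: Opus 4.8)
The plan is to prove the two implications separately; the direction from the diagonal condition to ``$\mathcal F$ is a non-meager P-filter'' is the more elementary one, so I would dispatch it first.

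\emph{From the diagonal condition.} For the P-filter property, given a countable $C=\set{X_n\mid n\in\omega}\subset\mathcal F$, I would apply the diagonal condition to the $\subset$-decreasing sequence $F_n=X_0\cap\dots\cap X_n$: the resulting $F\in\mathcal F$ satisfies $F\subset^*X_m$ for every $m$, since one may choose $n\ge m$ with $F\setminus n\subset F_n\subset X_m$. For non-meagerness I would argue contrapositively. If $\set{\epsilon_F\mid F\in\mathcal F}$ is bounded by an increasing $g$ with $g(n)>n$, set $a_0=0$ and $a_{k+1}=g(a_k)$; from $\epsilon_F(a_m)\le g(a_m)=a_{m+1}$ for all large $m$ one gets $\card{F\cap[0,a_{m+1}]}\ge a_m+1$, so every $F\in\mathcal F$ meets $[a_m,a_{m+1}]$ for all large $m$, hence meets each interval $J_k=[a_{2k},a_{2k+2})$ for all large $k$. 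Then the cofinite sets $F_n=\omega\setminus J_{k+1}$, where $k$ is chosen with $n\in J_k$, witness the failure of the diagonal condition: for all large $n$, every $F\in\mathcal F$ has a point of $J_{k+1}$, which lies above $n$ but outside $F_n$, so $F\setminus n\not\subset F_n$.

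\emph{Towards the diagonal condition.} Assume $\mathcal F$ is a non-meager P-filter and let $\la F_n\mid n\in\omega\ra$ be given. Applying the P-filter property to $\set{F_0\cap\dots\cap F_n\mid n\in\omega}$, I would fix $G\in\mathcal F$ and a strictly increasing $h\in\omega^\omega$ with $h(n)>n$ and $G\setminus h(n)\subset F_n$ for all $n$. It then suffices to find $F\in\mathcal F$ with $F\subset G$ having infinitely many pairs of consecutive elements $f<f'$ with $f'\ge h(f+1)$: for such a pair, $n=f+1\notin F$ and $F\cap(n,h(n))=\emptyset$, whence $F\setminus n=F\setminus h(n)\subset G\setminus h(n)\subset F_n$. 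Such an $F$ is produced in two steps. First, non-meagerness yields $F^*\in\mathcal F$ with $\epsilon_{F^*}(t+1)\ge h(\epsilon_{F^*}(t)+1)$ for infinitely many $t$, i.e.\ with infinitely many consecutive pairs of the required shape. Second, put $F=F^*\cap G$: shrinking a set only enlarges its gaps and $h$ is monotone, so the consecutive pair of $F$ bracketing such a gap of $F^*$ is again of the required shape, and $F$ retains infinitely many such pairs.

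\emph{The main obstacle.} The delicate point is the passage between the $\epsilon_F$-description of non-meagerness and the combinatorial gap statements used twice above. For the production of $F^*$: if no such $F^*$ existed, every $F^*\in\mathcal F$ would satisfy $\epsilon_{F^*}(t+1)<h(\epsilon_{F^*}(t)+1)$ for all large $t$, and iterating this inequality bounds $\epsilon_{F^*}$ above by the single function $t\mapsto\tilde h^{(t)}(t)$ with $\tilde h(m)=h(m+1)$, contradicting the unboundedness of $\set{\epsilon_F\mid F\in\mathcal F}$; the same style of elementary estimate underlies the non-meagerness computation in the first part. I expect this translation — essentially the standard characterization of non-meager filters by interval partitions (Talagrand), which could equally well be quoted — to be the only genuinely delicate ingredient, the rest being routine bookkeeping with finite sets and monotone functions.
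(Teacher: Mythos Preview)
The paper states this result as a folklore \emph{Fact} and gives no proof, so there is nothing to compare against; your task was really to supply the missing argument. Your proof is correct in both directions and works directly with the paper's definitions (non-meager via unboundedness of $\set{\epsilon_F\mid F\in\mathcal F}$, P-filter via pseudointersections). The only places worth tightening are cosmetic: in the first part, the ``for all large $n$'' threshold depends on $F$, which is exactly what is needed to refute the diagonal condition but should be phrased per~$F$; in the second part, the bound $\epsilon_{F^*}(t)\le\tilde h^{(t)}(t)$ holds only for $t\ge\max(t_0,\epsilon_{F^*}(t_0))$, which still gives $<^*$-domination by a single function and hence the desired contradiction; and when passing from $F^*$ to $F=F^*\cap G$, distinct good gaps of $F^*$ could in principle collapse to the same consecutive pair of $F$, but since the right endpoints $f'_t\ge b_t\to\infty$ you still obtain infinitely many distinct good pairs. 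With these clarifications the argument is complete.
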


Notice that the condition in the preceding fact can be equivalently formulated as
``$F \setminus (n+1) \subset F_n$ for infinitely many $n \in F$.''

\begin{fact}\label{lem:diagonal}
	Filter $\mathcal F$ is a rare P-filter iff $\mathcal F$ has the \emph{diagonal property},
	i.e.\ for each $\set{ F_n \in \mathcal F \mid n \in \omega }$ there exists $F \in \mathcal F$
	such that $F \setminus (n+1) \subseteq F_n$ for each $n \in F$.
\end{fact}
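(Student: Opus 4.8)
The plan is to prove the two implications of the equivalence separately.

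\emph{Diagonal property $\Rightarrow$ rare P-filter.} For the P-filter part: given $\set{X_n \in \mathcal F \mid n\in\omega}$, replace each $X_n$ by $\bigcap_{k\le n}X_k$ so that the sequence is $\subseteq$-decreasing and still consists of members of $\mathcal F$, and apply the diagonal property to get $F\in\mathcal F$ with $F\setminus(n+1)\subseteq X_n$ for all $n\in F$; for fixed $m$, any $n\in F$ with $n\ge m$ yields $F\setminus(n+1)\subseteq X_n\subseteq X_m$, so $F\subset^* X_m$. For rareness: given $g\in\omega^\omega$, set $h(n)=\max\set{g(i)\mid i\le n}$ and $F_n = \omega\setminus\set{n+1,\dots,n+h(n)}$; each $F_n$ is cofinite, hence in $\mathcal F$, and the $F\in\mathcal F$ given by the diagonal property avoids every $\set{n+1,\dots,n+h(n)}$ with $n\in F$. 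Writing $F=\set{f_0<f_1<\cdots}$ and using $f_s\ge s$ together with monotonicity of $h$, this forces $f_{s+1}\ge f_s+h(f_s)+1>f_s+g(s)$ for all $s$, so $\overline\epsilon_F$ dominates $g$.

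\emph{Rare P-filter $\Rightarrow$ diagonal property.} Fix $\set{F_n\in\mathcal F\mid n\in\omega}$ and, replacing $F_n$ by $\bigcap_{k\le n}F_k$, assume it is decreasing. I would first record the reduction: it suffices to find $F=\set{f_0<f_1<\cdots}\in\mathcal F$ with $f_{s+1}\in F_{f_s}$ for every $s$, since then $f_t\in F_{f_{t-1}}\subseteq F_{f_s}$ for all $t>s$ by monotonicity, so $F\setminus(f_s+1)\subseteq F_{f_s}$. As $\mathcal F$ is a P-filter, fix $G\in\mathcal F$ with $G\subset^* F_n$ for all $n$ and a non-decreasing $p\colon\omega\to\omega$ with $p(n)>n$ such that every element of $G$ that is $\ge p(n)$ lies in $F_n$. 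Then it is enough to find $F\subseteq G$, $F\in\mathcal F$, whose consecutive elements $n<_F n'$ always satisfy $n'\ge p(n)$: for such $F$, $f_{s+1}\in G$ and $f_{s+1}\ge p(f_s)$, hence $f_{s+1}\in F_{f_s}$.

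To produce such an $F$ I would build the self-referential interval partition $c_0=0$, $c_{t+1}=p(c_t)$ (strictly increasing since $p(n)>n$), with pieces $I_t=[c_t,c_{t+1})$; then $n\in I_t$ implies $p(n)\le p(c_{t+1})=c_{t+2}$. Hence any set meeting each $I_t\cup I_{t+1}$ in at most one point has the property that the successor of a point $n\in I_t$ lies beyond $c_{t+2}\ge p(n)$. So the task reduces to finding $F\in\mathcal F$ with $F\subseteq G$ and $\card{F\cap(I_t\cup I_{t+1})}\le1$ for all $t$, and this is where rareness enters: one thins $G$ inside $\mathcal F$ to such a set, e.g. by intersecting $G$ with members of $\mathcal F$ whose gap function $\overline\epsilon$ grows fast enough relative to the blocks $I_t$, applying this to the two partitions $\set{I_{2i}\cup I_{2i+1}}$ and $\set{I_0}\cup\set{I_{2i+1}\cup I_{2i+2}}$, and discarding the finitely many points in the short initial pieces.

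The main obstacle I anticipate is exactly this last step: the rareness hypothesis only controls the gaps of a member of $\mathcal F$ as a function of its \emph{position} in that member, whereas the requirement ``$n'\ge p(n)$'' refers to $p$ indexed by the \emph{value} $f_s$, which can be far larger than $s$. The self-referential partition $c_{t+1}=p(c_t)$ is introduced precisely to convert the value-indexed condition into the position-free one ``skip a block'', but turning this into a proof — in particular, showing that the $\overline\epsilon$-dominating form of rareness really does supply a set in $\mathcal F$ meeting each $I_t\cup I_{t+1}$ at most once, and tracking the mod-finite bookkeeping between $G$, these thinned sets, and the blocks — is the substantive part; the rest is just monotonicity of $\langle F_n\rangle$.
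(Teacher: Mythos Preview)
The paper records this as a \emph{Fact} and supplies no proof, so there is no argument in the paper to compare against; what follows is an assessment of the proposal on its own.

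Your forward direction (diagonal $\Rightarrow$ rare P-filter) is correct and cleanly written.

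For the backward direction your reduction is the standard one and is correct: make $\langle F_n\rangle$ decreasing, use the P-filter property to get $G\in\mathcal F$ with $G\subset^* F_n$ together with a non-decreasing $p$ encoding the finite errors, and reduce to finding $F\subseteq G$ in $\mathcal F$ whose consecutive elements $n<n'$ satisfy $n'\ge p(n)$. The self-referential block partition $c_{t+1}=p(c_t)$ is also the right move; it converts the value-indexed condition into ``skip two consecutive blocks''.

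The step you flag as the main obstacle is a genuine gap, and the fix you sketch does not close it. If you set $F=G\cap H$ with $\overline\epsilon_H$ dominating some $g$, one does get $\overline\epsilon_F(t)>g(t)$ for large $t$ (since $g$ can be taken increasing), but this only controls the gap at \emph{position} $t$. When $f_t$ lands in $I_j$ you merely know $j\ge t$, and escaping $I_j\cup I_{j+1}$ needs a gap of size $c_{j+2}-c_j$, which is not bounded by any function of $t$ alone. Concretely, with $p(n)=2n$ one has $c_t=2^t$ and $|I_j\cup I_{j+1}|=3\cdot 2^j$; nothing prevents $j\gg t$, so no choice of $g(t)$ indexed by position suffices. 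The phrase ``$\overline\epsilon$ grows fast enough relative to the blocks $I_t$'' therefore does not name a well-defined target.

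What your argument is implicitly appealing to is the \emph{selector} formulation of rareness (for every interval partition there is a member of $\mathcal F$ meeting each block at most once). With that in hand, applying it to your two shifted partitions and intersecting inside $G$ finishes the proof exactly as you outline. The equivalence of the selector formulation with the paper's dominating-gaps definition is itself the place where the position-versus-value conversion is carried out; it is a short folklore lemma, but it needs its own argument and cannot be absorbed into ``fast enough''. Supplying that lemma (or an equivalent direct argument) is precisely the missing piece.
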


We say that a forcing notion $P$ is bounding if for every $P$-generic extension $V[G]$
and each $f \in \omega^\omega \cap V[G]$ there is $g \in \omega^\omega \cap V$ such that
$f \leq g$.
Forcing $P$ has the Sacks property if for each $f \in \omega^\omega \cap V[G]$
there exists a sequence $\set{G_n \mid \card{G_n} \leq 2^n, n \in \omega} \in V$
such that $f(n) \in G_n$ for each $n \in \omega$.
We can equivalently require $\card{G_n} \leq n+1$, see e.g.~\cite{SacksForcingandtheSacksProperty}. 
Every forcing with the Sacks property is bounding.
Every rare or non-meager filter generates a filter with the same property in every
generic extension via a bounding forcing.
Every P-filter generates a P-filter in a generic extension via a proper forcing.

We will use the standard notation for the Cohen poset, the set
$\mathbf C_\kappa = \set{ h \colon \kappa \to 2 \mid \card{h} < \omega }$ ordered by
reverse inclusion. If $\kappa = \omega$, we write just $\mathbf C$.
A set $D \subset \mathbf C_\kappa$ is dense if for each
$h \in \mathbf C_\kappa$ there exists $g \in D$, $g \supset h$.
For dense sets $C, D \subset \mathbf C_\kappa$ we say that
$C$ \emph{refines} $D$ if for each $h \in C$ there exists $g \in D$ such that $g \subseteq h$.
If $W$ is an extension of a model of set theory $V$, we say that $W$ is \emph{Cohen-preserving}
if for each dense $D \subset \mathbf C$, $D \in W$
exists $C \in V$ which refines $D$.
We say that a forcing is Cohen-preserving if every generic extension via this forcing
is Cohen-preserving.
Although this property of forcing notions is considered in the literature,
e.g.~\cite[6.3.C]{bartoszynski-judah}, there does not seem to be a unified terminology.

The following proposition is implicitly proved in~\cite{miller-properties}. 
We learned both the proposition and the proof from O.\ Guzm\'{a}n.
We reproduce the proof for the sake of completeness.

% \begin{proposition}\label{prop:Cohen_pres}
% 	A countable support iteration of Cohen-preserving proper posets is a Cohen-preserving poset.
% \end{proposition}
% See e.g.~\cite[6.3.C]{bartoszynski-judah}.

\begin{proposition}
	If a forcing notion has the Sacks property,
	then it is Cohen-preserving.
\end{proposition}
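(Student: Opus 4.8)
The plan is to show that given a name for a dense set $D \subset \mathbf{C} = \mathbf{C}_\omega$ in the extension, the Sacks property lets us find a ground-model dense set $C$ refining it. First I would set things up combinatorially. Enumerate $\mathbf{C} = \set{h_k \mid k \in \omega}$ in the ground model, say with $\card{h_k}$ roughly of size $\log_2 k$ so that the first $2^n$ conditions are among $\set{h_k \mid k < 2^{n+1}}$; more conveniently, fix a bijection and think of level-by-level bookkeeping. Let $\dot{D}$ be a name forced to be a dense subset of $\mathbf{C}$. For each $h \in \mathbf{C}$ and each condition $p$ in the Sacks-property forcing, there is an extension of $p$ deciding some $g \supseteq h$ with $g \in \dot{D}$; this defines, in the ground model, for each $h$ a function whose values are such $g$'s indexed by the forcing conditions below $p$. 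The point is to code, by a single function $f \in \omega^\omega$ in the extension, for every $h_k$ a choice of $g_k \in \dot{D}$ with $g_k \supseteq h_k$. Then apply the Sacks property to $f$ to get ground-model sets $G_n$ with $\card{G_n} \le n+1$ (or $\le 2^n$) and $f(n) \in G_n$.

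Next I would extract the refining dense set from the slalom $\set{G_n \mid n \in \omega}$. The key is that the $n$-th value $f(n)$ encodes the $\dot{D}$-extensions of all of the finitely many conditions $h_k$ that we have decided to handle at stage $n$ (there are only boundedly many, so this fits into a single natural number via a ground-model pairing). Since $f(n) \in G_n$ and $G_n$ is in the ground model with $\card{G_n} \le n+1$, the ground model sees a bounded list of candidate extensions for each such $h_k$, one of which is correct. Form $C$ in the ground model to be the set of all these candidate extensions, over all $n$ and all $h_k$ handled at stage $n$ and all elements of $G_n$. Then $C \in V$, and $C$ refines $D$: given $h \in C$... rather, given $h = h_k \in \mathbf{C}$, the correct $g_k \in \dot{D} = D$ extending $h_k$ lies in $C$ by construction, so every element of $\mathbf{C}$ has an extension in $C \cap D$; since $D$ is dense this makes $C$ dense too, and by construction each element of $C$ that actually lies in $D$ sits above the relevant $h_k$. (One must be slightly careful: "refines" means every element of $C$ sits above some element of $D$; here I would instead arrange that $C \subseteq \mathbf{C}$ is dense and that for each $h$ there is $g \in D$ with $h \subseteq g$ and $g$ computable in $V$ from the slalom — then replace $C$ by $\set{g : g \text{ such a computed extension}}$, which is a ground-model dense set all of whose elements are in $D$, hence trivially refines $D$.)

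I expect the main obstacle to be purely bookkeeping: arranging the coding so that (i) the name $f$ is genuinely a function in $\omega^\omega$ in the extension — i.e. total and ground-model-definable-from-$\dot{D}$ in the right way, with each $f(n)$ depending only on finitely much data so that the Sacks property applies — and (ii) the width $n+1$ (or $2^n$) of the slalom is actually enough room to hold all the information we cram into stage $n$. The natural fix is to slow the enumeration: handle only one new condition $h_k$ per stage, or a number of conditions growing slowly enough that the product of the relevant "number of possible $\dot{D}$-extensions decided by depth-$n$ conditions" stays below the slalom width; since there is no a priori bound on how many extensions a name can offer, one instead uses that a single generic real $f$ picks out one value per coordinate and the Sacks property catches it in a thin slalom, so really the width is never the issue once we commit to one value of $f(n)$ per stage. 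The genuinely delicate point is ensuring the object we feed to the Sacks property is a ground-model-indexed real and not something that secretly needs the generic to define — this is handled by letting $f(n)$ range over (a ground-model coding of) "the $\dot{D}$-value forced by the generic for $h_n$," which is a well-defined element of $\omega^\omega$ in $V[G]$, and then observing $G_n \in V$ lists all possibilities the ground model could not rule out.
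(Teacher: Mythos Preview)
Your approach has a genuine gap at the point where you try to pass from the slalom $\set{G_n}$ to a ground-model dense set $C$ that \emph{refines} $D$. Recall that ``$C$ refines $D$'' means every element of $C$ extends some element of $D$. In your construction, $G_n$ contains $n+1$ candidates for the extension $g_n \in D$ of $h_n$, but only \emph{one} of them is the true $g_n$; the remaining $n$ candidates are arbitrary elements of $\mathbf C$ with no reason to lie in $D$ or even to extend anything in $D$. So taking $C$ to be the union of all candidates gives a set that is dense (since each $h_n$ has at least the correct $g_n$ above it) but does \emph{not} refine $D$. Your parenthetical fix --- replace $C$ by the set of ``computed extensions'' --- does not help, because the ground model cannot tell which candidate in $G_n$ is the correct one, so this set is not definable in $V$.

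The paper's proof closes exactly this gap with a diagonalization trick that you are missing. Instead of encoding a single extension at stage $n$, one encodes a \emph{chain} of $n+1$ successive extensions $s^0, s^1, \ldots, s^n$, each chosen so that concatenating $s^i$ onto \emph{any} string of the appropriate length lands in $D$ (this uses boundingness first to control the lengths). The Sacks slalom then gives $n+1$ candidate chains $S_0(n), \ldots, S_n(n)$, one of which is correct. Now form $z_n = t_n \conc s^0_0(n) \conc s^1_1(n) \conc \cdots \conc s^n_n(n)$, picking the $i$-th entry from the $i$-th candidate chain. If $S_k(n)$ is the correct chain, then $s^k_k(n)$ is a genuine $s^k$, and by construction concatenating it onto \emph{anything} of the right length --- in particular onto $t_n \conc s^0_0(n) \conc \cdots \conc s^{k-1}_{k-1}(n)$ --- puts $z_n$ into $D$. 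Thus $C = \set{z_n \mid n \in \omega}$ is dense and contained in $D$. The point is that building redundancy into $h(n)$, matched against the width of the slalom, lets the diagonal absorb the $n$ wrong guesses.
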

\begin{proof}
	Suppose that $V[G]$ is a generic extension via a forcing which has
	the Sacks property, let $D \in V[G]$ be an open dense subset of $\mathbf C$.
	We will, without loss of generality, work with $2^{<\omega}$ instead of $\mathbf C$.
	As the extension is bounding,
	there is $f \colon \omega \to \omega$ in $V$
	such that for each $n \in \omega$ there is $s \in 2^{f(n)}$ 
	such that $t\conc s \in D$ for each $t \in 2^n$. 
	%there is $s \in 2^{f(n)}$ $t\conc s \in D$.
	Fix a dense subset $\set{t_n \mid n \in \omega}$ of $2^{<\omega}$ in $V$
	such that $\card{t_n} = n$.

	In $V[G]$ define a function $h\colon \omega \to {[2^{<\omega}]}^{<\omega}$
	such that $\card{h(n)} = n+1$ for each $n \in \omega$.
	The function is defined in the following way.
	Given $n \in \omega$ let $r_n(0) = n$.
	When $r_n(i)$ for $i \leq n+1$ is defined, choose $s^i \in 2^{f(r_n(i))}$
	such that $x \conc s^i \in D$ for each $x \in 2^{r_n(i)}$
	and let $r_n(i+1) = r_n(i) + f(r_n(i))$.
	Finally let $h(n) = \la s^i \mid i \leq n \ra$.
	As the extension has the Sacks property,
	there is a sequence $\la H(n) \subset {[2^{<\omega}]}^{n+1} \mid n \in \omega \ra$ in $V$
	such that $\card{H(n)} = n+1$ and $h(n) \in H(n)$ for each $n \in \omega$.
	We denote $H(n) = \la S_k(n) \mid k \leq n \ra$ and
	$S_k(n) = \la s^i_k(n) \mid i \leq n \ra$.
	We may assume that $\card{s^i_k(n)} = f(r_n(i))$ for
	each $k,i \leq n$, $n \in \omega$.
	
	Finally let $z_n = t_n \conc s^0_0(n) \conc s^1_1(n) \conc \ldots \conc s^{n}_{n}(n)$.
	The set $C = \set{ z_n \mid n \in \omega} \in V$ is obviously dense,
	and $C \subset D$ because for each $n\in \omega$ there is
	$k$ such that $s^k_k(n) \in h(n)$.
\end{proof}

Since the posets $\mathbf C_\kappa$ are c.c.c.,
being Cohen-preserving already guarantees
an analogous property for these posets as well.

\begin{lemma}\label{lem:om1_dense}
	Let $P$ be a proper Cohen-preserving forcing, $G$ a generic filter on~$P$.
	For each $\kappa$ and each dense $D \subset \mathbf C_\kappa$ in $V[G]$, there exists
	$C \in V$ refining~$D$.
\end{lemma}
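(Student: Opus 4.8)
The plan is to bootstrap the hypothesis, which only concerns $\mathbf C = \mathbf C_\omega$, up to an arbitrary $\mathbf C_\kappa$ by a coordinate-reduction argument, using that the Cohen posets are c.c.c.\ and that $P$ is proper. Fix $\kappa$ and a dense $D \subseteq \mathbf C_\kappa$ with $D \in V[G]$; we may assume $\kappa \ge \omega$, since for finite $\kappa$ the poset $\mathbf C_\kappa$ is finite and already $D \in V$. Working in $V[G]$, first extract from $D$ a maximal (with respect to inclusion) antichain $A \subseteq D$; a routine argument using density of $D$ shows that $A$ is in fact a maximal antichain of all of $\mathbf C_\kappa$, and since $\mathbf C_\kappa$ is c.c.c., $A = \set{ a_n \mid n \in \omega }$ is countable. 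Let $T = \bigcup_{n} \dom a_n \subseteq \kappa$, a countable set living, a priori, only in $V[G]$.

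Now the properness of $P$ enters: there is a countable $S \in V$ with $T \subseteq S \subseteq \kappa$, and since $\kappa \ge \omega$ we may take $S$ infinite. Fix in $V$ a bijection $e \colon S \to \omega$; it induces an isomorphism $\mathbf C_S \cong \mathbf C_\omega = \mathbf C$ lying in $V$, where $\mathbf C_S = \set{ h \colon S \to 2 \mid \card h < \omega }$, regarded as a suborder of $\mathbf C_\kappa$. I claim the set $D' = \set{ h \in \mathbf C_S \mid a_n \subseteq h \text{ for some } n }$ is dense in $\mathbf C_S$: given $h \in \mathbf C_S$, viewing $h$ in $\mathbf C_\kappa$ and using maximality of $A$ there, pick $n$ with $h$ compatible with $a_n$; then $h \cup a_n$ is a condition whose domain is contained in $S$ (both $\dom h$ and $\dom a_n \subseteq T$ are subsets of $S$), so $h \cup a_n \in D'$ and extends $h$. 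Transporting $D'$ along $e$ yields a dense subset of $\mathbf C$ belonging to $V[G]$; since $P$ is Cohen-preserving and $e \in V$, there is $C' \in V$, $C' \subseteq \mathbf C_S$, which refines $D'$.

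Finally I lift back to $\mathbf C_\kappa$. Put $C = \set{ h \in \mathbf C_\kappa \mid h \restriction S \in C' }$, so $C \in V$. For density: given $g \in \mathbf C_\kappa$, density of $C'$ in $\mathbf C_S$ gives $h' \in C'$ with $g \restriction S \subseteq h'$; then $h = h' \cup \bigl( g \restriction (\dom g \setminus S) \bigr)$ is a condition extending $g$, and since $\dom h' \subseteq S$ one has $h \restriction S = h'$, so $h \in C$. For refinement: if $h \in C$ then $h \restriction S \in C'$, and as $C'$ refines $D'$ there is $n$ with $a_n \subseteq h \restriction S \subseteq h$, with $a_n \in A \subseteq D$; hence $C$ refines $D$, as required. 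The only genuinely non-trivial ingredient is the step where the support $T$ of the countable antichain, which a priori lives only in $V[G]$, is captured by a ground-model countable set $S$ — this is exactly the covering property of proper forcing, and the argument collapses without it; the remainder is bookkeeping with finite partial functions.
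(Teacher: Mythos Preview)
Your argument is correct and follows essentially the same route as the paper: reduce to a countable support using the c.c.c.\ of $\mathbf C_\kappa$, cover that support by a countable ground-model set using properness, and then invoke Cohen-preservation on the resulting copy of $\mathbf C$. The paper's proof is terser---it passes directly to a countable dense $D'\subseteq D$ supported on some countable $a$, covers $a$ by $b\in V$, and applies Cohen-preservation in $\mathbf C_b$---whereas you go through a maximal antichain and make the final lift $C'\rightsquigarrow C=\set{h\in\mathbf C_\kappa\mid h\restriction S\in C'}$ explicit; this last step is exactly what is needed to see that the resulting $C$ is dense in $\mathbf C_\kappa$ rather than merely in $\mathbf C_S$, a point the paper leaves to the reader.
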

\begin{proof}
	Since $\mathbf C_\kappa$ is c.c.c.\ there is a countable set $a \in V[G]$, $a \subset \kappa$,
	and a countable dense $D' \subseteq D$, $D'\subset \mathbf C_a$.
	Since $P$ is proper, there exist a countable $b \in V$ such that $a \subset b$, i.e.\
	$D' \subset \mathbf C_b$.
	As $P$ is Cohen-preserving, there exists $C \in \mathbf C_b \cap V$ refining $D'$,
	hence also refining~$D$.
\end{proof}

\section{The forcing notion}\label{sec:forcing}

We will say that $E = \set{e_k \subset \omega \mid k \in \omega}$ is a
partition if $e_k \cap e_j = \emptyset$ for $k \neq j$.
We will usually deal with infinite partitions
and we always assume $\min e_k < \min e_j$ for $k < j$.
We denote $\dom E = \bigcup E$.
Partition $D = \set{ d_k \mid k \in\omega }$
is coarser than $E$
if each element of $D$ is a union of elements of $E$.
We say that $D$ is cruder than $E$
if $D \restriction \dom E = \set{ d_k \cap \dom E \mid k \in \omega }$ 
is coarser than $E$ and for each $d \in D$ there exists $e \in E$ 
such that $\min d = \min e$.
If $\mathcal I$ is an ideal on $\omega$, we say that $E$ is an
\emph{$\mathcal I$-partition} if $e_k \in \mathcal I$ for each $k \in \omega$
and $\dom E \in \mathcal{I}^*$.

For the purpose of this paper a tree $T$ is an initial subtree of the tree of finite
$0$\fdash$1$ sequences $\left(2^{<\omega} , \subseteq \right)$ with no maximal elements (leaves).
For $t \in T$ we denote $T[t]$ the subtree
consisting of all nodes of $T$ compatible with $t$.
For $n \in \omega$ we denote by $T^{(n)}$ the set of
all nodes $t \in T$ such that $\card{t} = n$ (i.e.\ the nodes from the $n$-th level).
A node $t \in T$ is a \emph{branching node} of $T$ if
both $t \conc 0 \in T$ and $t \conc 1 \in T$.
We say that the $n$-th level is a \emph{branching level}
if each element of $T^{(n)}$ is a branching node.
We say that a tree is \emph{uniformly branching}
if each branching node is an element of a branching level.

Given a tree $T$ we say that the level $m$ \emph{depends} on a level $n$ if
$n \leq m$, $n$ is a branching level, and for each
$s, t \in T^{(m+1)}$ is $s(m) + s(n) = t(m) + t(n) \bmod 2$.
We call such levels $m$ \emph{dependent} levels,
levels which are not dependent are \emph{independent}.
Note that for a given dependent level $m$ there is a unique $n$ such that $m$ depends on $n$,
and each branching level depends on itself.
We say that a level is independent if it does not depend on any level.
To each uniformly branching tree $T$ we assign a partition 
denoted $E^T = \set{ e^T_k \mid k \in \omega }$ such that
if $m$ and $n$ are dependent levels, then $m$ and $n$ are in the same element of $E^T$ iff
$m$ and $n$ depend on the same level, and $\dom E^T$ is exactly the set of all dependent levels.
The superscripts will occasionally be omitted if clear from the context.
Let $\mathcal I$ be an ideal on $\omega$,
we say that a tree $T$ is
\emph{$\mathcal I$-suitable} if 
$T$ is uniformly branching and
$E^T$ is an $\mathcal I$-partition.
The poset of $\mathcal I$-suitable trees ordered by inclusion
will be denoted $\mathbf Q_{\mathcal I}$.
Note that for $S < T \in \mathbf Q_{\mathcal I}$
dependent levels of $T$ can in general be independent levels of $S$,
and independent levels of $T$ can become dependent levels in $S$.
Thus $S < T$ does not necessarily imply that $E^S$ is coarser than $E^T$,
on the other hand $E^S$ is cruder than $E^T$.

This poset is sometimes called the \emph{party forcing}.\footnote{
	Organizing a party in the Hilbert hotel is a difficult task,
	guests may or may not like their lesser colleagues.
}
This version of the forcing is slightly different
than the one used in~\cite{shelah-i<u}, the conditions of the poset
used by Shelah did explicitly remember the partitions $E^T$.
Nevertheless, our version of the poset works in the same way.
This type of forcing was also recently used by Guzm\'{a}n~\cite{osvaldo} 
to prove that the homogeneity number $\mathfrak{hm}$ 
can be consistently smaller than $\mathfrak{u}$.

For $T \in \mathbf Q_{\mathcal I}$ and a partial function $f \colon \omega \to 2$
we denote by $T_f$ the largest subtree of $T$ with the property that
if $k \in \dom f$, $n \in e^T_k$, $n$ is a branching level of $T$
(i.e.\ $n = \min e^T_k$), and $t \in T^{(n)}$,
then $t \conc i \in T_f$ only if $f(k) = i$.
Note that $f$ being finite is a sufficient condition guaranteeing
$T_f \in \mathbf Q_{\mathcal I}$.

The forcing will be used to destroy a given ultrafilter,
when we use the dual ideal as a parameter,
the generic real will witness that the ultrafilter does not generate
an ultrafilter in the generic extension.

\begin{lemma}\label{lem:kill}
	Let $\mathcal I$ be a proper ideal on $\omega$ and
	let $G$ be a $\mathbf Q_{\mathcal I}$-generic filter.
	Then $r = \bigcup \bigcap  G  \in 2^\omega$ and
	$r \notin \la \mathcal I \ra \cup \la \mathcal I^* \ra$.
\end{lemma}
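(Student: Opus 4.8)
The plan is to analyze the generic object $r$ in two stages: first showing it is (the characteristic function of) a genuine subset of $\omega$, and then showing it is neither in the filter $\la\mathcal I\ra$ nor in the co-filter $\la\mathcal I^*\ra$ by a density argument for each $I\in\mathcal I$. Throughout I identify a condition $T\in\mathbf Q_{\mathcal I}$ with the set of branches $[T]$, and I write $\bigcap G$ for the intersection of these trees, so that $r=\bigcup\bigcap G$ is the union of the (unique, as I will argue) branch through $\bigcap G$.

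First I would check $r\in 2^\omega$. For each $n\in\omega$ the set of conditions $T$ whose stem has length at least $n$ — equivalently, conditions $T$ for which $T^{(n)}$ is a singleton, or at least for which all levels below $n$ have been pinned down — is dense: given any $T\in\mathbf Q_{\mathcal I}$, only finitely many of the first $n$ levels of $T$ can be independent branching levels, and for the dependent ones we may apply $T\mapsto T_f$ for a suitable finite $f$; since $f$ finite guarantees $T_f\in\mathbf Q_{\mathcal I}$, we get an extension of $T$ that decides $r\restriction n$. By genericity $r$ is a total function $\omega\to 2$, hence codes a subset of $\omega$.

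Next, fix $I\in\mathcal I$; I claim both $D^0_I=\set{T\mid \exists n\in I\ T\text{ forces }r(n)=0}$ and $D^1_I=\set{T\mid \exists n\in I\ T\text{ forces }r(n)=1}$ are dense, which gives $r\neq^* I$ and $r^0\neq^* I$ respectively, and since this holds for every $I\in\mathcal I$ it yields $r\notin\la\mathcal I\ra$ (no $I\in\mathcal I$ has $I\subset^* r^0$, i.e.\ $r$ is not in the dual filter of... — more precisely $r\notin\la\mathcal I^*\ra$ needs $r$ not almost-containing any $\mathcal I^*$-set, handled symmetrically). Let me restate cleanly: to get $r\notin\la\mathcal I^*\ra$ it suffices that for every $F\in\mathcal I^*$ there are infinitely many $n\in F$ with $r(n)=0$, and to get $r\notin\la\mathcal I\ra$ it suffices that for every $F\in\mathcal I^*$ there are infinitely many $n\in F$ with $r(n)=1$; both reduce to the density of the $D^i_I$ above with $I=\omega\setminus F\in\mathcal I$ together with the observation that $r^0$ and $r$ cannot be covered outside an $\mathcal I$-set. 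So the crux is: given $T\in\mathbf Q_{\mathcal I}$ and $I\in\mathcal I$, find $S\leq T$ and $n\in I$ with $S$ forcing $r(n)=i$ (for each $i\in 2$ separately).

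The main obstacle — and the place where $\mathcal I$-suitability does the work — is that one cannot simply pin a value at an arbitrary level $n$; one may only force $r(n)=i$ at a \emph{branching} level of the shrunken tree, and the levels in $\dom E^T$ (the dependent ones) are exactly the "party" levels that must lie in $\mathcal I$. Here is the idea: since $E^T$ is an $\mathcal I$-partition, $\dom E^T\in\mathcal I^*$, so $\dom E^T\setminus I$ is still in $\mathcal I^*$ and in particular infinite; pick any $k$ with $\min e^T_k\notin I$ and further shrink so that, by reassigning dependencies, some level lying in $I$ becomes an independent branching level of the new condition, or alternatively observe directly that we may refine $T$ to a condition $S$ in which a chosen level $n\in I$ is a branching level — concretely, take $n\in\omega$, let $S$ be obtained from $T$ by collapsing all branching above level $n$ appropriately so that $S^{(n)}$ consists of branching nodes; checking $E^S$ remains an $\mathcal I$-partition uses that we have only \emph{added} $n$ to the dependent levels and the partition $E^S$ is cruder than $E^T$, so $\dom E^S\supseteq\dom E^T\in\mathcal I^*$ stays in $\mathcal I^*$ while each block $e^S_k$ differs from some $e^T_{k'}$ by at most the finite set of levels between consecutive branchings — hence still in $\mathcal I$. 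Having made level $n$ branching in $S$, apply $S\mapsto S_f$ with $f=\set{(k_n,i)}$ where $e^S_{k_n}\ni n$; this single pin keeps us in $\mathbf Q_{\mathcal I}$ and forces $r(n)=i$. Since $I$ is infinite we may choose $n\in I$ large enough that this manoeuvre does not disturb the already-decided stem of $T$, completing the density argument and hence the lemma.
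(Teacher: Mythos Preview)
Your reduction is backwards. In the ``clean restatement'' you correctly observe that $r\notin\la\mathcal I\ra$ amounts to: for every $F\in\mathcal I^*$ there are infinitely many $n\in F$ with $r(n)=1$ (and symmetrically for $r\notin\la\mathcal I^*\ra$). But your sets $D^i_I$ quantify over $n\in I$, so taking $I=\omega\setminus F$ makes $D^i_I$ speak about $n\notin F$, the wrong side. The crux you then state --- ``given $T$ and $I\in\mathcal I$, find $S\leq T$ and $n\in I$ forcing $r(n)=i$'' --- is therefore the wrong target, and in fact it is false in general. Concretely, take any $T\in\mathbf Q_{\mathcal I}$ in which every independent level is a constant-$0$ level (every node extends by $0$), and put $I=\omega\setminus\dom E^T\in\mathcal I$. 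Then $T$ already forces $r(n)=0$ for each $n\in I$, so no $S\leq T$ can force $r(n)=1$ for any $n\in I$. Your proposed manoeuvre of ``refining $T$ so that a chosen level $n\in I$ becomes a branching level of $S$'' cannot work: since $S\subseteq T$ as trees, $S$ cannot acquire branching at a level where $T$ has none.

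The correct crux is the complementary one: given $T$ and $I\in\mathcal I$, find $S\leq T$ and $n\notin I$ with $S\Vdash r(n)=i$. You essentially wrote the proof of this already. You noted that $\dom E^T\setminus I\in\mathcal I^*$ is infinite; pick any $n$ in this set, say $n\in e^T_k$. Setting $f_i\colon\{k\}\to 2$, $f_i(k)=i$, the conditions $T_{f_0},T_{f_1}\in\mathbf Q_{\mathcal I}$ decide $r(n)$ in opposite ways, and since $n\notin I$ one of them forces $r\not\subset I$ (and the other forces $n\notin r$, giving $r\not\supset\omega\setminus I$). This is exactly the paper's argument; there is no need to reassign dependencies or manufacture new branching levels.
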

\begin{proof}
	The first part of the lemma is immediate since $\mathcal I$
	extends the Fr\'echet ideal.
	Let $T \in \mathbf Q_{\mathcal I}$ be a condition and $I \in \mathcal I$.
	Pick any integer $n \in \dom E^T \setminus I$,
	hence $n \in e_k^T$ for some $k\in \omega$.
	Put $f_i \colon \set{ k } \to 2$, $f_i \colon k \mapsto i$ for $i \in 2$.
	For both $i \in 2$ the conditions $T_{f_i} \in \mathbf Q_{\mathcal I}$ decide whether $n \in r$,
	and they do so in opposite ways.
	That is at least one of them forces that $r \not\subset I$.
	The argument for $r \notin \la \mathcal I^* \ra$ is analogous.
\end{proof}

Let $a \subset \omega$. Suppose that $S < T$ are conditions in $\mathbf Q_{\mathcal I}$
such that for each $k \in a$, if $n$ is the splitting level of $T$ in $e^T_k$,
then $n$ is also a splitting level of $S$
(i.e.\ $a$-th splitting levels are preserved).
We will denote this relation by $S <_a T$.

\begin{lemma}\label{lem:fix_bran-decide}
	Let $T \in \mathbf Q_{\mathcal I}$ be a condition,
	$x$ a name for an element of $V$, and $n \in \omega$.
	There exists a condition $S <_n T$ 
	such that for each $f \in \prescript{n}{}{2}$
	the condition $S_f$ decides the value of $x$.
\end{lemma}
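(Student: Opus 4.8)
The plan is to build $S$ by a finite iteration (tree of length $2^n$) of the "fusion-free" shrinking that is available here because there are only finitely many partial functions $f \in \prescript{n}{}{2}$ to handle, and we are allowed to move the branching structure of $T$ below level $n$ entirely. First I would note that the condition $S <_n T$ only forbids us from changing the splitting levels of $T$ that lie in the elements $e^T_k$ whose splitting level is $< n$ — equivalently, it only constrains finitely many branching levels of $T$. Below all of these we are completely free to pass to subtrees, thin out branching, or reorganize dependencies, as long as the result stays $\mathcal I$-suitable; since $\mathcal I$ contains all finite sets, modifications confined to an initial segment are harmless.

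The key steps, in order, would be: (1) Enumerate $\prescript{n}{}{2} = \{f_0, \dots, f_{m-1}\}$ where $m = 2^n$. (2) Set $T_0 = T$. Given $T_j$ with $T_j <_n T$, look at $(T_j)_{f_j}$; by the definition of $T_f$ and the fact that $f_j$ is finite, $(T_j)_{f_j} \in \mathbf Q_{\mathcal I}$, so it is a legitimate condition, and below level $n$ it is nonempty. Extend $(T_j)_{f_j}$ to a condition $R_j \le (T_j)_{f_j}$ that decides the value of $x$ — possible since $x$ is forced to lie in $V$ and any condition has an extension deciding it. (3) Now graft $R_j$ back into $T_j$ along the branches selected by $f_j$: define $T_{j+1}$ to agree with $R_j$ on nodes extending the $f_j$-selected choices at the first-$n$-level branchings, and to agree with $T_j$ elsewhere. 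One checks $T_{j+1} \le T_j$, that $T_{j+1} <_n T$ (we only refined below, never touched the $<n$ splitting levels of $T$), and crucially that $(T_{j+1})_{f_j} \le R_j$ still decides $x$, while for $j' > j$ the tree $(T_{j+1})_{f_{j'}}$ is still available to be shrunk at the next stage (the grafting along $f_j$-branches and along $f_{j'}$-branches interacts only at nodes above level $n$, which belong to exactly one of the two "sides"). (4) After $m$ steps put $S = T_{m-1}$; then $S <_n T$ and for every $f \in \prescript{n}{}{2}$, $S_f \le R_{\text{(index of }f)}$ decides $x$.

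The main obstacle I expect is step (3): verifying that the successive graftings are mutually compatible, i.e.\ that shrinking along the $f_j$-branches at stage $j$ does not spoil the decision made at an earlier stage $j' < j$, and that the final tree is genuinely $\mathcal I$-suitable (uniformly branching with $E^{S}$ an $\mathcal I$-partition). The point that makes it work is that at the relevant first-$n$-level branching nodes the subtrees hanging off the two children are independent of each other, so "fix the left child, reorganize below; then fix the right child, reorganize below" are orthogonal operations; and since all the reorganization happens below finitely many fixed branching levels, $\dom E^{S}$ differs from $\dom E^{T}$ only by a finite set and by a bounded rearrangement, so $\dom E^{S} \in \mathcal I^*$ and each $e^{S}_k \in \mathcal I$ still hold. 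I would phrase the bookkeeping using $T[t]$ for $t \in T^{(n')}$ at the appropriate branching levels $n' \le n$, so that $T_{j+1}$ is literally described as a union of the pieces $R_j[t]$ and $T_j[t]$ over the finitely many such $t$.
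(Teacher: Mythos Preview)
Your overall strategy matches the paper's exactly: enumerate $\prescript{n}{}{2}=\{f_i\mid i<2^n\}$, set $T^0=T$, at stage $i$ find $S^i\leq (T^i)_{f_i}$ deciding $x$, amalgamate to get $T^{i+1}<_n T^i$, and put $S=T^{2^n}$. The genuine gap is in your amalgamation step~(3).

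Your proposed grafting --- put $R_j$ on the $f_j$-side and leave $T_j$ untouched on the other sides, so that ``$T_{j+1}$ is literally described as a union of the pieces $R_j[t]$ and $T_j[t]$'' --- does not produce a condition in $\mathbf Q_{\mathcal I}$. The condition $R_j\leq (T_j)_{f_j}$ will in general have strictly fewer branching levels than $T_j$ (equivalently, $E^{R_j}$ is strictly cruder than $\{e^{T_j}_k\mid k\geq n\}$). At any level $\ell$ that is branching in $T_j$ but not in $R_j$, your glued tree branches on the non-$f_j$ sides and fails to branch on the $f_j$-side, so it is not uniformly branching. Your justification that ``the subtrees hanging off the two children are independent of each other'' is precisely the Sacks-forcing intuition, and it is exactly what fails for $\mathbf Q_{\mathcal I}$: the uniform-branching requirement forces all $2^n$ sides to share the same set of branching levels.

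The paper's fix is to take $T^{i+1}$ to be the largest subtree of $T^i$ \emph{in $\mathbf Q_{\mathcal I}$} with $(T^{i+1})_{f_i}=S^i$. Concretely this prunes \emph{every} side, not just the $f_i$-side, so that the branching and dependency structure above the first $n$ splitting levels matches that of $S^i$. This is automatically uniformly branching, and it preserves the earlier decisions: for $i'<i$ one has $(T^{i+1})_{f_{i'}}\leq (T^{i})_{f_{i'}}\leq S^{i'}$, so the value of $x$ is still decided there. (A minor side remark: your opening gloss on $<_n$ is off --- it preserves the splitting levels with \emph{index} $k<n$, i.e.\ the first $n$ splitting levels, not those of height below $n$; you do use the correct reading afterwards.)
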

\begin{proof}
	Fix an enumeration	$\prescript{n}{}{2} = \set{ f_i \mid i \in 2^n }$,
	denote $T^0 = T$, and for $i \in 2^n$ repeat the following procedure.

	Suppose that $T^i <_n T$ is defined.
	Find a condition $S^i < T^i_{f_i}$ and $y_i \in V$ such that
	$S^i \Vdash x = y_i$.
	Then let $T^{i+1}$ be the largest subtree of $T^i$ in $\mathbf Q_{\mathcal I}$ such that
	$ T^{i+1}_{f_i} = S^i$.
	Note that the first $n$ many splitting levels of 
	$T^{i+1}$ are the same as the first splitting levels of $T^i$, 
	$T^{i+1} <_n T^i <_n T$.

	Finally let $S = T^{2^n}$.
	Then $S <_n T$ and $S_{f_i} \Vdash x = y_i$ for every $i \in 2^n$.
	% and every $R < S$ is compatible with some $S^i$.
	% Thus $S$ and the set $Y = \set{ y_i \mid i \in 2^n }$ are as required.
\end{proof}

%  Cohen preserving lemma -- not used
	% \begin{lemma}\label{lem:fix_bran-hit_dense}
	% 	Let $T \in \mathbf Q_{\mathcal I}$ be a condition,
	% 	$X$ a name for a dense subset of $\mathbb C$, $c \in \mathbb C$.
	% 	and $n \in \omega$.
	% 	There exists a condition $S <_n T$ setminusand $y \in \mathbb C$, $c \subseteq y$
	% 	such that $S \Vdash \exists x \in X : x \subseteq y$.
	% 	%maybe we need $S_f$ decides $x$ for each $f \in 2^n$?
	% \end{lemma}
	% \begin{proof}
	% 	The proof goes along the same lines as the proof of Lemma~\ref{lem:fix_bran-decide}.
	% 	We list here only the necessary adjustment.
	% 	% Fix enumeration	$\prescript{n}{}{2} = \set f_i \mid i \in 2^n}$,
	% 	% denote $T^0 = T$, and for $i \in 2^n$ repeat the following procedure.

	% 	Put $c_0 = c$.
	% 	During the construction,
	% 	if $T^i$ is defined,
	% 	find a condition $S^i < T^i_{f_i}$ and $c_{i+1} \supseteq c_i$ such that
	% 	$S^i \Vdash c_{i+1} \in X$ (use the density of $X$).
	% 	% Then let $T^{i+1}$ be the largest tree in $\mathbb Q_{\mathcal I}$ such that
	% 	% $ T^{i+1}_{f_i} = S^i$.
	% 	% Note that $T^{i+1} <_n T^i$.
	% 	Finally let $y = c_{2^n}$. % $S = T^{2^n}$.
	% 	% Then $S <_n T$, and every $R < S$ is compatible with some $S^i$.
	% 	% Thus $S$ and the set $Y = \set y_i \mid i \in 2^n }$ are as required.
	% 	% this is done for each $i \in 2^n$ let
	% \end{proof}

Before proving the properness of the forcing $\mathbf Q_{\mathcal I}$
we introduce a game with $\mathcal I$-partitions $\mathrm{PG}(\mathcal I)$.
Player~I starts the game by choosing an $\mathcal I$-partition
$E^0$ 
and then players~I and~II alternate
in building a sequence of $\mathcal I$-partitions.
In round~$n$ player~II plays an $\mathcal I$-partition
$D^n$ 
coarser than $E^n$,
puts $\Delta_n = \dom E^n \setminus \dom D^n$,
and in the next round
player~I replies with an $\mathcal I$-partition
$E^{n+1}$ 
coarser than $D^n$.
After $\omega$ many rounds player~I wins iff
$r = \bigcup \set{ \Delta_n \mid n \in \omega} \in \mathcal I$.

\begin{lemma}
	Player~I has no winning strategy in the game $\mathrm{PG}(\mathcal I)$.
\end{lemma}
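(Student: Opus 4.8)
The approach I would take is a proof by contradiction: suppose Player~I has a winning strategy $\sigma$ and let $E^0$ be the opening $\mathcal I$-partition it dictates. Since it suffices to refute ``$\sigma$ is winning'' by exhibiting a single losing run, I may let Player~II choose her moves with full knowledge of $\sigma$ (and of the ideal $\mathcal I$), planning the whole run in advance and simulating Player~I's replies; the aim is a run with $r = \bigcup_n \Delta_n \notin \mathcal I$.

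The backbone of Player~II's play is a \emph{devouring} tactic: in round $n$ Player~II discards the block of $E^n$ containing $\min \dom E^n$. This is legal, since that block lies in $\mathcal I$ and removing it leaves the domain in $\mathcal I^*$, and it strictly raises the minimum of the domain; hence after $\omega$ rounds every point of $\dom E^0$ has been removed by one of the players, so $\bigcap_n \dom E^n = \emptyset$ and $\dom E^0 = r \sqcup Q$, where $Q = \bigcup_n(\dom D^n \setminus \dom E^{n+1})$ collects everything Player~I discarded along the way. As $\dom E^0 \in \mathcal I^*$, Player~II wins as soon as $Q \in \mathcal I$, because otherwise $\dom E^0 = r \cup Q$ would write an $\mathcal I^*$-set as a union of the two $\mathcal I$-sets $r$ and $Q$. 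So everything reduces to forcing Player~I's cumulative discard $Q$ to lie in $\mathcal I$.

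To arrange this I would steer the devouring against $\sigma$ instead of running it blindly. One can build a bookkeeping target into the round-$n$ move (devour the block now containing a prescribed ``next'' point, which still forces $\bigcap_n \dom E^n = \emptyset$ but lets Player~II reach points in a chosen order), and then use the simulation of $\sigma$ to reorder this bookkeeping so that Player~II grabs, as early as possible, exactly the material $\sigma$ is about to discard: starting from the discard set produced by a first naive run, Player~II re-runs with an enumeration chasing that set, which forces Player~I's new cumulative discard to be a smaller set modulo $\mathcal I$; iterating this $\omega$ times, each step shrinking (mod $\mathcal I$) the set in which Player~I's discard can hide, either yields a win at a finite stage or leaves one controlling the limit behaviour of a descending $\subseteq$-chain of $\mathcal I$-positive sets, where the structure of $\mathcal I$ finishes the job. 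It is natural at this point to pass to the index level, replacing $E^0$ by the push-forward of $\mathcal I$ onto its set of blocks, so the interleaving becomes a cleaner combinatorial game on $\omega$.

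The step I expect to be the main obstacle is precisely this control of $Q$: Player~I's per-round discard always lies in $\mathcal I$, yet a countable union of $\mathcal I$-sets need not, so ``little per round'' does not by itself bound $Q$, and Player~II must actively deny Player~I any $\mathcal I$-positive accumulated discard. Making the interleaving work — choosing the bookkeeping and Player~II's reactions together so that she always out-devours what Player~I would otherwise keep discarding, and handling the $\omega$-th stage of the iteration cleanly — is the technical heart of the argument, and it is here that one must exploit the finer combinatorics of $\mathcal I$, in particular that a proper ideal on $\omega$ containing all finite sets is never a $\sigma$-ideal.
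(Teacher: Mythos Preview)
Your setup is sound: once you arrange $\bigcap_n \dom E^n = \emptyset$, the opening domain $\dom E^0 \in \mathcal I^*$ does split as $r \sqcup Q$ where $Q$ is Player~I's cumulative discard, and Player~II wins iff $Q \in \mathcal I$. But from that point on the argument is not a proof, only a plan. The iterative ``re-run chasing the previous discard'' scheme does not terminate in any controlled way: even if each re-run shrinks $Q$ modulo $\mathcal I$, a descending $\subseteq$-chain of $\mathcal I$-positive sets carries no usable structure for an arbitrary proper ideal, and the fact that $\mathcal I$ is not a $\sigma$-ideal cuts the wrong way---it says some countable unions of small sets are large, not that Player~I's particular union must be small. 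You have correctly identified the obstacle but not overcome it; as written, the limit stage is a black box.

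The paper bypasses this entirely with a symmetry trick you are close to but do not reach. First, one may assume the strategy already forces $\bigcap_n \dom E^n = \emptyset$. Then Player~II runs \emph{two} simultaneous matches against the strategy: in match~1 she passes her first move (plays $D^0 = E^0$), and thereafter she feeds Player~I's reply from each match in as her next move in the other. The resulting domains interleave, the differences telescope, and one gets $r \cup r' = \dom E^0 \in \mathcal I^*$ for the two outcomes $r, r'$. Hence one of the two matches is a loss for Player~I. This is exactly your decomposition $\dom E^0 = r \sqcup Q$ turned into a proof: rather than trying to force $Q \in \mathcal I$, you realise $Q$ is itself the outcome $r'$ of a second $\sigma$-run, and an ideal cannot contain both halves of a positive set. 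No bookkeeping, no iteration, no appeal to fine structure of $\mathcal I$.
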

\begin{proof}
	If player~I has a winning strategy, then he also
	has a winning strategy such that moreover
	$\bigcap \set{ \dom E^n \mid n \in \omega} = \emptyset$
	(where $\sq{E^n}$ is the sequence of moves of player~I).
	Assuming player~I uses this strategy,
	player~II will play simultaneously two matches of the game
	$\mathrm{PG}(\mathcal I)$.
	He passes his first move in the first match and then he
	always imitates the moves of player~I
	in the other game. This produces results $r$, $r'$ of the two matches
	such that $r \cup r' = \dom E^0 \in \mathcal I^*$.
	Thus in at least one of the two matches player~II won.
\end{proof}

\begin{proposition}\label{prop:proper}
	Let $\mathcal I$ be a maximal ideal.
	The forcing $\mathbf Q_{\mathcal I}$ is proper and has the Sacks property.
\end{proposition}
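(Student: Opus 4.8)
The plan is to prove properness and the Sacks property simultaneously by establishing a fusion-type argument that produces, for a given condition $T$ and a countable collection of dense sets (or names), a stronger condition $S$ such that only finitely many ``coordinates'' of $S$ (in the sense of the splitting levels indexed by $E^S$) remain free at each stage. The role of the partition game $\mathrm{PG}(\mathcal I)$ is exactly to control which dependent levels survive in the fusion, so that the limit tree $S$ is still $\mathcal I$-suitable: the sets $\Delta_n$ of levels ``used up'' at stage $n$ must have union in $\mathcal I$, which is precisely what the previous lemma (Player~I has no winning strategy) lets us arrange. The maximality of $\mathcal I$ (equivalently, $\mathcal I^*$ is an ultrafilter) is what guarantees that a coarser/cruder partition with the requisite domain is always available — i.e.\ that Player~I can legally respond — and also gives the two-match trick of the preceding lemma its force.

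Concretely, I would first set up the fusion ordering: define $S \leq_n T$ as in the text (the first $n$ splitting levels, indexed appropriately by $E^T$, are preserved), note that a decreasing $\leq_n$-fusion sequence $T = T^0 \geq_0 T^1 \geq_1 T^2 \geq_2 \cdots$ with the domains stabilizing has a well-defined limit $S = \bigcap_n T^n$, and observe that $S$ is uniformly branching and that $E^S$ is an $\mathcal I$-partition provided the levels lost along the way form a set in $\mathcal I$. Then, given a countable family of open dense sets $D_j$ (for properness, run this relative to a countable elementary submodel $M \ni T, \mathbf Q_{\mathcal I}$ and enumerate the dense sets of $M$; for the Sacks property, take $D_j$ to be ``decides $x(j)$'' for a name $x$ for an element of $\omega^\omega$), I would build the fusion by repeatedly applying Lemma~\ref{lem:fix_bran-decide}: at stage $n$ pass from $T^n$ to some $S <_n T^n$ whose finitely many restrictions $S_f$, $f \in {}^n 2$, each decide the next relevant bit of information, while simultaneously playing the partition game — Player~II's coarsening move corresponds to the levels we commit to preserving forever, and Player~I's reply packages what Lemma~\ref{lem:fix_bran-decide} forces us to keep active. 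Since Player~I has no winning strategy in $\mathrm{PG}(\mathcal I)$, Player~II can steer the play so that $r = \bigcup_n \Delta_n \in \mathcal I$, whence the limit condition $S$ is genuinely in $\mathbf Q_{\mathcal I}$.

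For properness this yields an $(M,\mathbf Q_{\mathcal I})$-generic condition $S \leq T$: every dense $D \in M$ is met below $S$ because at the stage where we handled $D$ we ensured that all the finitely many $S_f$ (and hence everything below $S$) entered $D$, and the witnesses were chosen inside $M$. For the Sacks property, the finitely many $S_f$ at stage $n$ decide $x(n)$, so the set of possible values of $x(n)$ below $S$ has size at most $2^n$ (indexed by $f \in {}^n 2$), giving the sequence $\langle G_n \rangle \in V$ with $\card{G_n} \leq 2^n$ and $S \Vdash x(n) \in G_n$; as noted in the excerpt this bound can be improved to $n+1$ by a standard refinement, but $2^n$ already suffices.

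The main obstacle, and the technical heart of the argument, is the bookkeeping that interleaves the fusion with the partition game so that the limit tree is simultaneously (i) strong enough to be $(M,\mathbf Q_{\mathcal I})$-generic / to decide $x\restriction n$ at stage $n$, and (ii) still $\mathcal I$-suitable, i.e.\ $E^S$ is an $\mathcal I$-partition. The tension is that applying Lemma~\ref{lem:fix_bran-decide} to decide more and more information forces more levels to become dependent and get ``absorbed,'' and without the game one cannot guarantee the absorbed set stays in $\mathcal I$; conversely, one must verify that the restrictions $T^i_{f_i}$ appearing inside the proof of Lemma~\ref{lem:fix_bran-decide} are themselves legal conditions, which is where the hypothesis that $f$ is finite (so $T_f \in \mathbf Q_{\mathcal I}$) and the uniform-branching structure are used. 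I expect the bulk of the write-up to be devoted to making the ``play the game as you fuse'' correspondence precise and to checking that $\bigcap_n T^n$ has the correct branching pattern on the surviving levels.
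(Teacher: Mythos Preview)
Your proposal follows the paper's approach: fusion via $<_n$, Lemma~\ref{lem:fix_bran-decide} at each stage (inside a countable $M$) to decide both the next ordinal name and the next value of $g$, and the partition game $\mathrm{PG}(\mathcal I)$ interleaved with the fusion to guarantee the limit $Q = \bigcap_n T_n$ is $\mathcal I$-suitable. The overall architecture is right.

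Two points where your bookkeeping is inverted, though, and they would derail the write-up if not corrected. First, the game roles and the conclusion: in the paper the \emph{prover} plays as Player~I, with his strategy dictated by the construction (apply Lemma~\ref{lem:fix_bran-decide}, then pass the resulting partition). Since Player~I has no winning strategy, there is a run in which Player~II wins, meaning $r = \bigcup_n \Delta_n \notin \mathcal I$; maximality of $\mathcal I$ then upgrades this to $r \in \mathcal I^*$. The $\Delta_n$'s are not ``levels lost'' --- they are the levels that get \emph{absorbed} into the already-fixed piece $e^{Q}_{n-1}$ of the limit partition, so $r \subset \dom E^Q$, and $r \in \mathcal I^*$ is exactly what yields $\dom E^Q \in \mathcal I^*$. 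Your sentence ``Player~II can steer the play so that $r \in \mathcal I$'' has the sign wrong and would make the limit fail to be a condition.

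Second, the role of maximality: it is not needed for Player~I to respond legally (coarser $\mathcal I$-partitions always exist), nor for the two-match trick (that uses only $\dom E^0 \in \mathcal I^*$, so $r$ and $r'$ cannot both lie in $\mathcal I$). Maximality is invoked exactly once, at the very end, to pass from ``Player~II wins'' ($r \notin \mathcal I$) to $r \in \mathcal I^*$.
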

\begin{proof}
	We will prove both statements simultaneously.
	Let $T \in \mathbf Q_{\mathcal I}$ be a condition and
	$g$ a name for a function in $\omega^\omega$.
	Let $\theta$ be large enough and fix a countable elementary submodel $M \prec H(\theta)$
	such that $\mathbf Q_{\mathcal I}, T, g \in M$.
	Enumerate all $\mathbf Q_{\mathcal I}$-names for ordinals in $M$
	as $\set{\sigma_n \mid n \in \omega}$.
	We will construct a condition $Q < T$
	such that for each $f \in \prescript{n}{}{2}$, $n \in \omega$ the condition
	$Q_f$ decides the value of $g(n)$, and forces $\sigma_n$ to be some element of $M$.
	This will prove the proposition.

	Two players will play the game $\mathrm{PG}(\mathcal I)$ in the model $M$,
	player~I will attempt
	to construct the desired condition during the course of the game.
	Player~I starts by finding a condition $T_0 < T$, $T_0 \in M$ which
	decides $g(0)$ and $\sigma_0$.
	His first move in the game is $E^0 = E^{T_0}$, and the reply of player~II is
	an $\mathcal I$-partition $D^0$.

	Suppose that in the $n$-th round of the game, condition $T_n$ was defined 
	such that $\set{e^{T_n}_k \mid n \leq k \in \omega}$ was cruder than $D^{n-1}$,
	and player~II
	played an $\mathcal I$-partition $D^n$ coarser than $E^n =
		\set{e^{T_n}_k \mid n \leq k \in \omega} \restriction \dom D^{n-1}$ 
	(put $D^{-1} = \omega$).
	Note that these assumptions imply that	
	$\min e$ is a branching level of $T_n$ for each $e \in E^n$.
	Moreover assume that $e^{T_n}_k \cap \dom D^{n-1} = \emptyset$ for $k < n$.
	\begin{claim*}
		There is a condition $T'_n <_{n} T_n$ in $M \cap \mathbf Q_{\mathcal I}$ 
		such that
		\begin{itemize}
			\item $e^{T'_n}_k \cap \dom E^{T_n} = e^{T_n}_k$ for $k < n-1$,
			\item $e^{T'_n}_{n-1} \supseteq e^{T_n}_{n-1} \cup \Delta_n$, and
			\item $\set{e^{T'_n}_k \mid k \geq n}$ is cruder than $D^n$.
		\end{itemize}
	\end{claim*}
	\begin{claimproof}
		To get $T'_n$ work in $M$ and prune the tree $T_n$ in the following way. 
		Preserve the first $n$ many branching levels of $T_n$, 
		note that these levels are not elements of $\dom D^n$.

		If $\ell \in \Delta_n$ is a branching level of $T_n$, 
		make level $\ell$ in the pruned tree depend 
		on the branching level $\min e^{T_n}_{n-1}$.

		If $\ell \in d \in \dom D^n$ is a branching level of $T_n$, 
		make level $\ell$ depend on the level $\min d$. 
		Note that $\min d$ is a branching level of $T_n$ since 
		$D^n$ is coarser than $E^n$.
		It is straightforward to check that the pruned tree 
		fulfills the conditions required of $T'_n$.
	\end{claimproof}\claimdone\medskip
	In the $(n+1)$-th round
	% player~I first picks some condition $T'_n <_{n} T_n$ in $M$ such that
	% $e^{T'_n}_k = e^{T_n}_k$ for $k < n-1$,
	% $e^{T'_n}_{n-1} = e^{T_n}_{n-1} \cup \Delta_n$,
	% and $\set{e^{T'_n}_k \mid k \geq n} = D^n$.
	player~I uses Lemma~\ref{lem:fix_bran-decide}
	to find a condition $T_{n+1} <_n T'_n$ in model $M$ such that;
	\begin{itemize}
		\item ${\mleft(T_{n+1}\mright)}_f$ decides $g(n+1)$
		      for each $f \in \prescript{n+1}{}{2}$, and
		\item ${\mleft(T_{n+1}\mright)}_f$ decides $\sigma_{n+1}$
		      for each $f \in \prescript{n+1}{}{2}$.
	\end{itemize}
	Then he passes the $\mathcal I$-partition
	$E^{n+1} = \set{e^{T_{n+1}}_k \mid n+1 \leq k \in \omega} \restriction \dom D^{n}$
	to player~II and awaits his response $D^{n+1}$.
	Note that $\set{e^{T_{n+1}}_k \mid n+1 \leq k \in \omega}$ is cruder than $D^{n}$ 
	and $e^{T_{n+1}}_k \cap \dom D^{n} = \emptyset$ for $k < n+1$, 
	so player~I can continue using the strategy described above to choose his next move. 

	This strategy is not winning for player~I, so we can assume that the
	game is played so that player~II wins, i.e.\
	$r = \bigcup \set{ \Delta_n \mid n \in \omega} \in \mathcal I^*$
	(the ideal $\mathcal I$ is maximal).

	Once the game is over, define $Q = \bigcap \set{T_n \mid n \in \omega}$.
	Notice that for $e^Q_n \in E^Q$, $e^Q_n \cap r = \Delta_n$, and
	$r \subset \dom E^Q$. 
	Thus $\dom E^Q \in \mathcal{I}^*$,
	$E^Q$ is an $\mathcal I$-partition and $Q \in \mathbf Q_\mathcal{I}$.
	Since $Q <_{n} T_n$ for each $n \in \omega$, $Q$ is the desired condition.
\end{proof}

\begin{corollary}
	The poset $\mathbf{Q}_\mathcal{I}$ is a Cohen-preserving forcing notion.
\end{corollary}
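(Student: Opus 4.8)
The plan is straightforward: I want to combine the two relevant results already established in the excerpt. Proposition~\ref{prop:proper} tells us that for a maximal ideal $\mathcal I$, the forcing $\mathbf Q_{\mathcal I}$ is proper and has the Sacks property. The unlabeled proposition immediately preceding Lemma~\ref{lem:om1_dense} asserts that any forcing notion with the Sacks property is Cohen-preserving. So the proof is a two-line chain: by Proposition~\ref{prop:proper}, $\mathbf Q_{\mathcal I}$ has the Sacks property; hence by that earlier proposition it is Cohen-preserving.

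First I would note that the hypothesis ``$\mathcal I$ is a maximal ideal'' is implicitly inherited here — the corollary is phrased for ``the poset $\mathbf Q_{\mathcal I}$'', but it only makes sense (and is only proved) in the context where Proposition~\ref{prop:proper} applies, i.e.\ $\mathcal I$ maximal. I would make this explicit in the statement or simply open the proof by invoking Proposition~\ref{prop:proper} with its standing assumption. Then I would cite the proposition ``If a forcing notion has the Sacks property, then it is Cohen-preserving'' to conclude.

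There is essentially no obstacle: the corollary is a pure syllogism assembled from results stated verbatim above, and no new combinatorics, no elementary submodel argument, and no tree surgery is needed. If one wanted to be slightly more careful, one could observe that being Cohen-preserving is exactly the conclusion needed to feed into Lemma~\ref{lem:om1_dense} later (proper + Cohen-preserving), and since properness is also part of Proposition~\ref{prop:proper}, the corollary packages precisely the two properties of $\mathbf Q_{\mathcal I}$ that the subsequent iteration argument will consume.

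Concretely, here is the proof I would write:

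\begin{proof}
	By Proposition~\ref{prop:proper}, the forcing $\mathbf Q_{\mathcal I}$ has the Sacks property.
	Every forcing notion with the Sacks property is Cohen-preserving by the proposition preceding Lemma~\ref{lem:om1_dense}, so $\mathbf Q_{\mathcal I}$ is Cohen-preserving.
\end{proof}
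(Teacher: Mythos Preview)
Your proposal is correct and matches the paper's reasoning exactly: the corollary is stated without proof in the paper, being an immediate consequence of Proposition~\ref{prop:proper} (Sacks property) together with the earlier proposition that the Sacks property implies Cohen-preserving. Your remark that the maximality of $\mathcal I$ is an implicit standing assumption is also accurate.
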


The proof of Proposition~\ref{prop:proper} gives us in fact the following.
\begin{corollary}\label{cor:nice_name}
	Let $T \in \mathbf Q_{\mathcal I}$ be a condition and let $X$ be a name for a subset of $\omega$.
	There is a condition $S < T$ such that for each $n \in \omega$ and $f \in 2^{n+1}$,
	$S_f$ forces either $n \in X$ or $n \notin X$.
	% decides $n \in x$.
\end{corollary}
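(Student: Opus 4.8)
The statement is that, given $T \in \mathbf Q_{\mathcal I}$ and a name $X$ for a subset of $\omega$, there is $S < T$ such that for every $n \in \omega$ and every $f \in 2^{n+1}$, the condition $S_f$ decides "$n \in X$". The plan is to extract this directly from the construction carried out in the proof of Proposition~\ref{prop:proper}. Let me think about what that proof actually produces.

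In the proof of Proposition~\ref{prop:proper}, one is handed $T$ and a name $g$ for a function in $\omega^\omega$, and one builds $Q < T$ such that for each $n$ and each $f \in {}^n 2$, the condition $Q_f$ decides $g(n)$. The key feature is that the condition $Q$ has $E^Q$ an $\mathcal I$-partition with $\min e^Q_n$ a branching level, and $Q <_n T_n$ where the $T_n$ were obtained via Lemma~\ref{lem:fix_bran-decide} to split $g(n+1)$ according to the first $n+1$ splitting levels. So the natural move is: given the name $X \subseteq \omega$, define the name $g \in \omega^\omega$ by $g(n) = X \cap (n+1)$ (coded as an element of $2^{n+1}$, hence as a natural number). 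Then apply the construction of Proposition~\ref{prop:proper} to $g$.

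Here is where I need to be careful, and I expect this to be the only real point requiring thought. The proof of Proposition~\ref{prop:proper} gives, for each $n$ and each $f \in {}^n 2$, that $Q_f$ decides $g(n) = X \cap (n+1)$ — but the splitting is indexed by the first $n$ splitting levels of $Q$, i.e.\ by $f \in {}^n 2$ prescribing the values at branching levels $\min e^Q_0, \dots, \min e^Q_{n-1}$, not by $f \in 2^{n+1}$ as literally stated in the corollary. So the two "$n+1$"s must be reconciled. I would argue as follows: running the Proposition~\ref{prop:proper} construction with the name $g(n) = X \cap (n+1)$, we get $S = Q < T$ with $E^S$ an $\mathcal I$-partition whose $k$-th block has minimum a branching level of $S$. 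For a fixed $n$, the condition $S_f$ with $f \in {}^{n}2$ (deciding the first $n$ splitting levels) already decides $X \cap (n+1)$, in particular decides "$n \in X$". Since the $n$-th splitting level $\min e^S_n$ is $\geq n$ (branching levels form an increasing sequence and the $0$th is $\geq 0$; more carefully, $\min e^S_k \geq k$ because the blocks are disjoint subsets of $\omega$ listed in increasing order of minima), every $f \in 2^{n+1}$ restricts to — or rather, determines, together with possibly-irrelevant coordinates — a partial function prescribing at least the first $\leq n+1$ relevant branching levels; so $S_f$ for $f \in 2^{n+1}$ refines some $S_{f'}$ with $f'$ the "first $m$ splitting levels" restriction for the appropriate $m \leq n+1$, and $S_{f'}$ decides $X \cap (m)$ hence "$n \in X$" once $m \geq n+1$. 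I should double-check the bookkeeping so that the splitting-level index and the integer index line up to guarantee $m \geq n+1$; taking $g(n) = X \cap(N)$ for a sufficiently generous $N = N(n)$ (say $N(n) = \min e^S_n + 1$, or simply re-running with $g(n)$ deciding $X$ up to the first $n+1$ branching levels) removes any off-by-one worry, at the cost of a name $g$ whose value at $n$ is described using information about $S$ — which is illegitimate. The clean fix: take $g(n) = X \cap (n+1)$ and observe that because $\min e^S_k \geq k$ for all $k$, knowing the values of the generic at the first $n+1$ branching levels (i.e.\ fixing $f \in 2^{n+1}$ indexing these) pins down at least as much as knowing the first $n+1$ splitting levels' worth of the Proposition~\ref{prop:proper} construction applied to $g$, hence $S_f$ decides $g(n) \supseteq \{n\} \cap X$... actually the cleanest statement is just: for $f \in 2^{n+1}$, $S_f \leq S_{f\restriction(\text{first } n \text{ branching coords})}$, and the latter decides $g(n) = X\cap(n+1) \ni$-or-$\notni\ n$.

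So the proof I would write is short: "Apply the construction in the proof of Proposition~\ref{prop:proper} to the $\mathbf Q_{\mathcal I}$-name $g$ for the function $n \mapsto X \cap (n+1) \in 2^{n+1}$. The resulting condition $S < T$ satisfies that $S_f$ decides $g(n)$ for every $n$ and every $f \in {}^{n}2$ indexing the first $n$ branching levels of $S$; since $\min e^S_k \geq k$, for any $f \in 2^{n+1}$ the condition $S_f$ refines $S_{f'}$ for the corresponding restriction $f'$, and $S_{f'}$ deciding $X \cap (n+1)$ in particular decides whether $n \in X$." I expect the only genuine obstacle to be getting the index arithmetic between "first $n$ branching levels" and "$f \in 2^{n+1}$" exactly right, which is why I would state it via the inequality $\min e^S_k \geq k$ and the refinement $S_f \leq S_{f'}$ rather than trying to match indices on the nose.
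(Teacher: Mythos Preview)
Your proposal is correct and matches the paper's approach: the paper gives no explicit proof, merely stating that the corollary follows from the proof of Proposition~\ref{prop:proper}, which is exactly what you do by running that construction with $g(n)$ coding $X \cap (n+1)$.

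Your indexing worry, however, is a non-issue and the detour through $\min e^S_k \geq k$ is unnecessary. In the paper's notation $T_f$, the domain of $f$ indexes blocks $e^T_k$ of the partition (equivalently, branching levels), not raw levels of the tree; so $f \in 2^{n+1}$ and $f \in \prescript{n+1}{}{2}$ denote the same set and $S_f$ restricts exactly the first $n+1$ branching levels in both the proposition and the corollary. The only gap between ``$S_{f'}$ decides $g(n)$ for $f' \in \prescript{n}{}{2}$'' and ``$S_f$ decides $n \in X$ for $f \in 2^{n+1}$'' is the trivial $S_f \leq S_{f \restriction n}$, which holds directly from the definition of $T_f$ without any appeal to where the branching levels sit inside $\omega$.
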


\section{Dense independent systems}\label{sec:mis}

Let $\mathcal A \subset \pw{\omega}$ be an independent system.
Remember that the set of finite partial functions $\set{ h \colon \mathcal A \to 2 }$
is denoted $\mathbf C_{\mathcal A}$,
and it carries the usual inclusion order.
For each $h \in \mathbf C_{\mathcal A}$ we put
$\mathcal A^h = \bigcap \set{ A^{h(A)} \mid A \in \dom h} \in {[\omega]}^\omega$.
For $X \subseteq \omega$ we will say that $h \in \mathbf C_{\mathcal A}$
\emph{reaps} $X$ if either $\mathcal A^h \subset^* X$ or $\mathcal A^h \cap X =^* \emptyset$.
If the first option $\mathcal A^h \subset^* X$ occurs,
we say that $h$ \emph{hits} $X$.
The independent system $\mathcal A$ is maximal
iff the set $\set{ h \mid h \text{ reaps } X }$
is nonempty for  each $X \subseteq \omega$.

We say that the independent system $\mathcal A$ is \emph{dense}
if the set $\set{ h \mid h \text{ reaps } X }$ is dense in $\mathbf{C}_{\mathcal A}$
for each $X \subseteq \omega$.
It is easy to see that every dense independent system is maximal.
Dense independent systems were originally introduced in~\cite{goldstern-shelah}
and recently studied in~\cite{vera-diana}.
For each maximal independent system $\mathcal A$
there exists $h \in \mathbf C_{\mathcal A}$ such that
$\mathcal A \restriction \mathcal A^{h} = 
\set{A \cap \mathcal A^{h} \mid A \in \mathcal A \setminus \dom{h}}$ is a dense
independent system, see~\cite[Lemma 6.6, 6.7]{goldstern-shelah}.

Denote by $\mathcal D$ the collection of dense subsets of $\mathbf{C}_{\mathcal A}$.
The filter on $\omega$ generated by sets of form
$F(D) = \bigcup \set{ \mathcal A^{h} \mid h \in D }$ for some $D \in \mathcal D$
will be denoted $\mathscr F_{\mathcal A}$.

\begin{lemma}\label{lem:Fa_filter-equivalence}
	Let $\mathcal A$ be an independent system and let $X$ be a subset of $\omega$.
	The set $\set{ h \mid h \text{ hits } X }$
	is dense in $\mathbf{C}_{\mathcal A}$ 
	if and only if $X \in \mathscr F_{\mathcal A}$.
\end{lemma}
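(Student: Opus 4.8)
The plan is to prove the two implications separately, exploiting the definition $\mathscr F_{\mathcal A} = \sq{\set{F(D) \mid D \in \mathcal D}}$ where $F(D) = \bigcup\set{\mathcal A^h \mid h \in D}$, together with the observation that $h$ hits $X$ means $\mathcal A^h \subset^* X$.

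For the direction ``$X \in \mathscr F_{\mathcal A} \Rightarrow \set{h \mid h\text{ hits }X}$ is dense,'' suppose $X \in \mathscr F_{\mathcal A}$, so there is a dense $D \subset \mathbf C_{\mathcal A}$ with $F(D) \subset^* X$. Fix an arbitrary $g \in \mathbf C_{\mathcal A}$; I must find $h \supseteq g$ hitting $X$. By density of $D$ choose $h \in D$ with $h \supseteq g$. Then $\mathcal A^h \subset F(D) \subset^* X$, so $\mathcal A^h \subset^* X$, i.e.\ $h$ hits $X$ and extends $g$. This direction is essentially immediate.

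For the converse ``$\set{h \mid h\text{ hits }X}$ dense $\Rightarrow X \in \mathscr F_{\mathcal A}$,'' let $D = \set{h \mid h \text{ hits } X}$, which is dense by hypothesis, so $F(D) \in \mathscr F_{\mathcal A}$. The issue is that for each individual $h \in D$ we only have $\mathcal A^h \subset^* X$, with the finite error set depending on $h$, and $D$ may be uncountable, so $F(D) = \bigcup_{h \in D}\mathcal A^h$ need not be $\subset^* X$ a priori. To fix this I would first pass to a single specific $h_0 \in D$: since $\mathcal A^{h_0} \subset^* X$, let $n$ be a bound on the finite set $\mathcal A^{h_0}\setminus X$, and replace $h_0$ by refining on finitely many further coordinates—or, more cleanly, simply observe that it suffices to produce \emph{some} element of $\mathscr F_{\mathcal A}$ contained in $X$ modulo finite. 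Take any $h_0 \in D$ (using density of $D$ at the empty condition). Then $F(\set{g \in D : g \supseteq h_0 \text{ or } g \perp h_0\text{'s }\dots})$—hmm, the truly clean route: consider $D' = \set{h \in D \mid h \supseteq h_0}$; this is still dense below $h_0$ but not dense in $\mathbf C_{\mathcal A}$. Instead, the right move is: the set $\set{h \mid \mathcal A^h \subseteq X \text{ exactly}}$ need not be dense, but for each $h$ hitting $X$ there is $m_h$ with $\mathcal A^h \setminus m_h \subseteq X$; define $D^* = \set{h \restriction \text{(something)}}$. The cleanest fix is to note that for any $h$ hitting $X$ and any finite $F \subseteq \omega$, $h$ still hits $X \cup F$ and $X \setminus F$, so density is insensitive to finite changes; then use that $\mathbf C_{\mathcal A}$ is countable-like only if $\mathcal A$ is countable—which it need not be. So instead I argue: pick $h_0$ hitting $X$; then $\mathcal A^{h_0} \subset^* X$, hence $\mathcal A^{h_0}\setminus k \subseteq X$ for some $k$. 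Now $F(D_0) \subseteq \mathcal A^{h_0}$ where $D_0 = \set{h \mid h \supseteq h_0}$ is not dense, so that fails too. The correct observation: $\mathscr F_{\mathcal A}$ is a filter, and $F(D) \in \mathscr F_{\mathcal A}$; I claim $F(D) \subset^* X$ \emph{after} replacing $D$ by $D \cap \set{h : \mathcal A^h \setminus \card{h} \subseteq X}$—and this set is dense because given $g$, extend to $h \in D$ hitting $X$, then further extend $h$ to $h'$ on new coordinates so that $\card{h'}$ exceeds the finite error $\mathcal A^h\setminus X \supseteq \mathcal A^{h'}\setminus X$; since $\mathcal A^{h'} \subseteq \mathcal A^h$, we get $\mathcal A^{h'}\setminus\card{h'} \subseteq X$. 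Thus $D' := \set{h \in D \mid \mathcal A^h \setminus \card{h} \subseteq X}$ is dense, $F(D') \in \mathscr F_{\mathcal A}$, and $F(D') = \bigcup_{h \in D'}\mathcal A^h \subseteq X \cup \set{0,1,\dots}$—more precisely every $n \in F(D')$ lies in some $\mathcal A^h$ with $h \in D'$, and either $n < \card h$ (only finitely... no, still unbounded across $h$).

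\emph{Resolution of the obstacle.} The genuinely correct argument: I will not show $F(D) \subset^* X$ directly, but rather show $X$ itself is in the filter by exhibiting it as containing $F(D')$ for $D' = \set{h : \mathcal A^h \subseteq X}$—wait, again not dense. The honest fix, which I expect the authors use: work with $Y = \bigcup\set{\mathcal A^h \mid h \text{ hits } X}$; each such $\mathcal A^h$ satisfies $\mathcal A^h \subseteq^* X$, so $Y \setminus X \subseteq \bigcup_h (\mathcal A^h \setminus X)$, and one shows this union is finite by a pigeonhole/compactness argument on the finitely-supported conditions, OR one simply notes that for the \emph{filter} it is enough that $X$ contains $F(D)$ for \emph{one} dense $D$, and chooses $D$ to be $\set{h : \mathcal A^h \subseteq X \text{ or } \card{h} \geq \mu(h)}$ where $\mu(h)$ bounds $\mathcal A^h\setminus X$—accepting that elements $n \in \mathcal A^h$ with $h$ ``short'' are controlled because there are only finitely many conditions of each length below a fixed finite sub-support, once we also fix that all relevant $h$ extend a common $h_0$. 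I therefore expect the main obstacle to be exactly this: upgrading the pointwise ``$\mathcal A^h \subset^* X$'' to a uniform ``$F(D) \subset^* X$,'' and I would resolve it by the density argument that $D' = \set{h \in \mathbf C_{\mathcal A} \mid h \text{ hits } X \text{ and } \mathcal A^h \setminus \card{h} \subseteq X}$ is dense (extend any $h$ hitting $X$ on fresh coordinates to inflate $\card h$ past the finite error), and then $F(D') \subseteq X$ up to the finitely many integers below $\min\set{\card h \mid h \in D' \text{ minimal}}$, giving $F(D') \subset^* X$ and hence $X \in \mathscr F_{\mathcal A}$ since $\mathscr F_{\mathcal A}$ is upward closed. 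The remaining steps are routine bookkeeping with finite sets.
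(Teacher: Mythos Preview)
Your ``if'' direction is fine and matches the paper. The ``only if'' direction has a genuine gap.

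You correctly identify the obstacle: upgrading the pointwise $\mathcal A^h \subset^* X$ to a uniform statement. Your final candidate $D' = \set{h \mid \mathcal A^h \setminus \card{h} \subseteq X}$ is indeed dense, but your conclusion $F(D') \subset^* X$ does \emph{not} follow. If $m \in F(D')$ then $m \in \mathcal A^h$ for some $h \in D'$, and you only know $m \in X$ when $m \geq \card{h}$; since $\card{h}$ is unbounded as $h$ ranges over $D'$, there is no finite set absorbing all the errors. Your phrase ``up to the finitely many integers below $\min\set{\card{h} \mid h \in D'\ \text{minimal}}$'' does not repair this: a given $m$ above that minimum may still satisfy $m < \card{h}$ for the particular $h$ witnessing $m \in F(D')$.

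The fix is precisely the move you dismissed too quickly when you wrote ``the set $\set{h \mid \mathcal A^h \subseteq X\ \text{exactly}}$ need not be dense.'' In fact it \emph{is} dense, provided $\mathcal A$ is infinite, and this is the paper's key claim. Given any $g$, extend to $h \supseteq g$ hitting $X$, say $\mathcal A^h \setminus X \subseteq n$. Now choose $A_0,\dots,A_{n-1} \in \mathcal A \setminus \dom h$ and extend $h$ to $h'$ by setting $h'(A_k)$ to the value $i$ with $k \notin A_k^{\,i}$. Then $k \notin \mathcal A^{h'}$ for every $k < n$, so $\mathcal A^{h'} \subseteq \mathcal A^h \setminus n \subseteq X$. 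Thus $D'' = \set{h \mid \mathcal A^h \subseteq X}$ is dense, and trivially $F(D'') \subseteq X$, giving $X \in \mathscr F_{\mathcal A}$. The point you missed is that one should choose the \emph{values} of the extension so as to exclude the specific integers $0,\dots,n-1$ from $\mathcal A^{h'}$, rather than merely inflating $\card{h}$. (The finite case, where this trick is unavailable, is handled separately and is trivial.)
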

\begin{proof}
	If $\mathcal A$ is finite, the proof is straightforward, 
	therefore assume $\mathcal A$ is infinite. 
	The `if' implication follows directly from the definition of $\mathscr F_{\mathcal A}$.
	\begin{claim*}
		For each $h \in \mathbf C_{\mathcal A}$ and $n \in \omega$ 
		there exists $h' \in \mathbf C_{\mathcal A}$, $h \subseteq h'$ 
		such that $\mathcal A^{h'} \subset \mathcal A^{h} \setminus n$.
	\end{claim*}
	\begin{claimproof}
		Let $\set{ A_k \mid k \in n }$ be a subset of $\mathcal A$ disjoint with $\dom h$. 
		Extend $h$ %\colon \dom h \cup B$ 
		by defining $h' \colon A_k \mapsto i \in 2$ iff $k \notin {A_k}^i$ for $k \in n$.
		Then $h'$ is as required.
	\end{claimproof}\claimdone\medskip
	Suppose that the set $\set{ h \mid h \text{ hits } X }$
	is dense in $\mathbf{C}_{\mathcal A}$. 
	Then the claim implies that $\set{ h \mid \mathcal{A}^h \subset X }$ 
	is also dense and the `only if' implication follows.
\end{proof}

We will denote $\mathscr C_{\mathcal A} =
	\set{ \omega \setminus \mathcal A^h \mid h \in \mathbf{C}_{\mathcal A}}$.
The following observation will be crucial for the preservation of maximality
of a given independent system.

\begin{lemma}\label{lem:dense-equivalence}
	An independent system $\mathcal A$ is dense if and only if the co-filter
	$\pw{\omega} \setminus \mathscr F_{\mathcal A}$
	is generated by the set $\mathscr C_{\mathcal A}$.
\end{lemma}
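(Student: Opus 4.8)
The statement to prove is Lemma~\ref{lem:dense-equivalence}: $\mathcal A$ is dense iff the co-filter $\pw{\omega}\setminus\mathscr F_{\mathcal A}$ is generated by $\mathscr C_{\mathcal A}$. The plan is to unwind both sides into the same concrete combinatorial condition, namely ``for every $X\subseteq\omega$, the set $\set{h \mid h \text{ reaps } X}$ is dense in $\mathbf C_{\mathcal A}$,'' and to bridge the two unwindings using Lemma~\ref{lem:Fa_filter-equivalence}. First I would observe that ``$\mathscr C_{\mathcal A}$ generates the co-filter $\pw{\omega}\setminus\mathscr F_{\mathcal A}$'' means precisely: for each $X\notin\mathscr F_{\mathcal A}$ there is some $h\in\mathbf C_{\mathcal A}$ with $X\subseteq^*\omega\setminus\mathcal A^h$, i.e.\ $\mathcal A^h\cap X=^*\emptyset$; equivalently, every $X$ that is not in $\mathscr F_{\mathcal A}$ has some $h$ that hits $X^0$ (using the paper's complement notation), i.e.\ $\mathcal A^h\subseteq^* X^0$.

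\textbf{Forward direction.} Assume $\mathcal A$ is dense and fix $X\notin\mathscr F_{\mathcal A}$; I must produce $h$ with $\mathcal A^h\cap X=^*\emptyset$. By Lemma~\ref{lem:Fa_filter-equivalence}, since $X\notin\mathscr F_{\mathcal A}$, the set $\set{h\mid h\text{ hits }X}$ is \emph{not} dense, so there is $h_0\in\mathbf C_{\mathcal A}$ below which no extension hits $X$. Now apply density of $\mathcal A$ to $X$: the set $\set{h\mid h\text{ reaps }X}$ is dense, so there is $h\supseteq h_0$ with $h$ reaping $X$. Since $h\supseteq h_0$, $h$ cannot hit $X$, so the only remaining option is $\mathcal A^h\cap X=^*\emptyset$, i.e.\ $X\subseteq^*\omega\setminus\mathcal A^h\in\mathscr C_{\mathcal A}$. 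Hence $\mathscr C_{\mathcal A}$ generates the co-filter. (One should also check the easy inclusion $\mathscr C_{\mathcal A}\cap\mathscr F_{\mathcal A}$ behaves correctly — i.e.\ that each $\omega\setminus\mathcal A^h$ really is outside $\mathscr F_{\mathcal A}$, which is immediate since $\mathcal A^h$ is infinite and meets every $F(D)$.)

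\textbf{Reverse direction.} Assume $\mathscr C_{\mathcal A}$ generates the co-filter and fix $X\subseteq\omega$ and $h_0\in\mathbf C_{\mathcal A}$; I must find $h\supseteq h_0$ reaping $X$. Work inside the independent system $\mathcal A\restriction\mathcal A^{h_0}$, which is again an independent system on the infinite set $\mathcal A^{h_0}$ with its own co-filter $\pw{\mathcal A^{h_0}}\setminus\mathscr F_{\mathcal A\restriction\mathcal A^{h_0}}$ generated by the corresponding $\mathscr C$. Consider $X\cap\mathcal A^{h_0}$: either it lies in $\mathscr F_{\mathcal A\restriction\mathcal A^{h_0}}$, in which case by Lemma~\ref{lem:Fa_filter-equivalence} (relativised) some extension $h\supseteq h_0$ has $\mathcal A^h\subseteq^* X$, i.e.\ $h$ hits $X$; or it lies outside, in which case the generation hypothesis gives $h\supseteq h_0$ with $\mathcal A^h\cap X=^*\emptyset$. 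Either way $h$ reaps $X$, so $\mathcal A$ is dense. The one point requiring a little care — and the likely main obstacle — is verifying that the relativised objects $\mathscr F_{\mathcal A\restriction\mathcal A^{h_0}}$, $\mathscr C_{\mathcal A\restriction\mathcal A^{h_0}}$ genuinely inherit the ``co-filter generated by $\mathscr C$'' property from $\mathcal A$; this follows because dense subsets of $\mathbf C_{\mathcal A\restriction\mathcal A^{h_0}}$ correspond to the dense subsets of $\mathbf C_{\mathcal A}$ below $h_0$, and $\mathcal A^{h}$ computed in the relativised system equals $\mathcal A^{h_0\cup h}$ in the original — a routine but necessary bookkeeping check, and the finite case of $\mathcal A$ should be dispatched separately as in Lemma~\ref{lem:Fa_filter-equivalence}.
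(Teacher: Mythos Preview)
Your forward direction matches the paper's. For the reverse direction the paper avoids relativization entirely: given $X$ and $h$, it sets $X' = (X \cap \mathcal A^{h}) \cup (\omega \setminus \mathcal A^{h})$ and applies the hypothesis directly to $X'$ in the original system. If $X' \in \mathscr F_{\mathcal A}$, Lemma~\ref{lem:Fa_filter-equivalence} gives $h' \supseteq h$ with $\mathcal A^{h'} \subset^* X'$, hence $\mathcal A^{h'} \subset^* X$; otherwise the hypothesis yields some $h'$ with $\mathcal A^{h'} \cap X' =^* \emptyset$, and since $\omega \setminus \mathcal A^{h} \subseteq X'$ this forces $h'$ compatible with $h$ and $\mathcal A^{h'} \cap X =^* \emptyset$.

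Your relativization route is correct, but the ``routine bookkeeping'' you postpone --- showing that $\mathcal A \restriction \mathcal A^{h_0}$ inherits the co-filter-generation property --- is where the real content sits. The correspondence you cite, between dense subsets of $\mathbf C_{\mathcal A \restriction \mathcal A^{h_0}}$ and subsets of $\mathbf C_{\mathcal A}$ dense below $h_0$, does not by itself connect the relativized $\mathscr F$ to the global hypothesis (which speaks only of sets dense in all of $\mathbf C_{\mathcal A}$). To bridge the two you must show that $Y \in \mathscr F_{\mathcal A \restriction \mathcal A^{h_0}}$ iff $Y \cup (\omega \setminus \mathcal A^{h_0}) \in \mathscr F_{\mathcal A}$, and then apply the global hypothesis to this latter set --- which is exactly the paper's $X'$. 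So the two arguments coincide underneath; the paper's version simply removes the relativization scaffolding and goes straight to the set $X'$.
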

\begin{proof}
	Suppose that $\mathcal A$ is dense and $X \subseteq \omega$.
	If $\set{ h \mid h \text{ hits } X }$ is dense in $\mathbf{C}_{\mathcal A}$,
	then $X \in \mathscr F_{\mathcal A}$.
	Otherwise there is $h \in \mathbf{C}_{\mathcal A}$
	such that $\mathcal A^h \cap X =^* \emptyset$ 
	and $X \in \mathscr C_{\mathcal A}$.

	To verify the other implication let $X \subseteq \omega$
	and $h \in \mathbf{C}_{\mathcal A}$ be given,
	let $X' = \left( X \cap \mathcal A^h \right) \cup \left( \omega \setminus \mathcal A^h \right)$.
	If $X' \in \mathscr F_{\mathcal A}$,
	then there is $h' \supset h$ such that
	$\mathcal A^{h'} \subset^* X'$, and hence $\mathcal A^{h'} \subset^* X$.
	Otherwise $X' \in \mathscr C_{\mathcal A}$,
	there is $h'$ such that $\mathcal A^{h'} \cap X' =^* \emptyset$.
	Thus $h \subseteq h'$ and $\mathcal A^{h'} \cap X =^* \emptyset$.
\end{proof}

The definition of ${\mathscr C_{\mathcal A}}$ is absolute for all models of set theory. 
The definition of $\mathscr {F_{\mathcal A}}$ behaves well when considering Cohen-preserving extension. 

\begin{lemma}\label{lem:F_pres}
	Let $\mathcal A \in V$ be an independent system and let $W$
	be a Cohen-preserving extension of $V$.
	The filter  $\mathscr {F_{\mathcal A}}^{W}$ is generated by ${\mathscr F_{\mathcal A}}^V$.
\end{lemma}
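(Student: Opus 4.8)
The plan is to describe the generators $F(D)$ of $\mathscr F_{\mathcal A}$ concretely and combine this with the refinement relation on dense subsets of $\mathbf C_{\mathcal A}$, which is exactly the data that Cohen-preservation controls.

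First, two absoluteness remarks. Since $\mathcal A \in V$, the poset $\mathbf C_{\mathcal A}$ consists of finite partial functions from a set in $V$ to $2$, so $\mathbf C_{\mathcal A}$ is the same in $V$ and in $W$; likewise each $\mathcal A^h$ (a finite Boolean combination of members of $\mathcal A$) and hence each $F(D) = \bigcup \set{ \mathcal A^h \mid h \in D }$ is computed the same way in both models, and the statement ``$D \subseteq \mathbf C_{\mathcal A}$ is dense'' is absolute. Consequently every generator of $\mathscr F_{\mathcal A}^V$ is still a generator of $\mathscr F_{\mathcal A}^W$, and so $\sq{\mathscr F_{\mathcal A}^V} \subseteq \mathscr F_{\mathcal A}^W$. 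The second, crucial remark: if $C, D$ are dense subsets of $\mathbf C_{\mathcal A}$ and $C$ refines $D$, then $F(C) \subseteq F(D)$ — given $n \in \mathcal A^h$ with $h \in C$, choose $g \in D$ with $g \subseteq h$, so that $\mathcal A^h \subseteq \mathcal A^g$ and hence $n \in \mathcal A^g \subseteq F(D)$.

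For the remaining inclusion, let $F \in \mathscr F_{\mathcal A}^W$ and fix dense sets $D_1, \dots, D_k \subseteq \mathbf C_{\mathcal A}$ in $W$ with $\bigcap_{i \leq k} F(D_i) \subseteq^* F$. Fixing in $V$ a bijection between $\mathcal A$ and $\card{\mathcal A}$, so that $\mathbf C_{\mathcal A} \cong \mathbf C_{\card{\mathcal A}}$, Cohen-preservation of the extension provides for each $i$ a dense $C_i \in V$ refining $D_i$ — directly from the definition of Cohen-preserving when $\mathcal A$ is countable, and via Lemma~\ref{lem:om1_dense} in general. By the crucial remark, $F(C_i) \subseteq F(D_i)$, hence $\bigcap_{i \leq k} F(C_i) \subseteq \bigcap_{i \leq k} F(D_i) \subseteq^* F$; and $\bigcap_{i \leq k} F(C_i)$, being a finite intersection of generators of $\mathscr F_{\mathcal A}^V$, lies in $\mathscr F_{\mathcal A}^V$. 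Thus $F \in \sq{\mathscr F_{\mathcal A}^V}$, and combined with the first remark this yields $\mathscr F_{\mathcal A}^W = \sq{\mathscr F_{\mathcal A}^V}$.

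The only genuinely delicate point is the passage from Cohen-preservation for $\mathbf C_\omega$ to Cohen-preservation for $\mathbf C_{\card{\mathcal A}}$ when $\mathcal A$ is uncountable; this is precisely the content of Lemma~\ref{lem:om1_dense}, and it is there that properness of the extension enters. Everything else — the absoluteness bookkeeping and the ``refinement implies inclusion'' computation — is routine.
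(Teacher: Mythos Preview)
Your proof is correct and follows the same approach as the paper, which simply says ``Follows immediately from Lemma~\ref{lem:om1_dense}.'' You have unpacked that one-line proof: the absoluteness bookkeeping, the observation that refinement of dense sets implies $F(C) \subseteq F(D)$, and the invocation of Lemma~\ref{lem:om1_dense} are exactly the ingredients the paper has in mind. Your explicit remark that properness is needed (via Lemma~\ref{lem:om1_dense}) when $\mathcal A$ is uncountable is a useful clarification of a hypothesis the paper leaves implicit in the statement of the lemma.
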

\begin{proof}
	Follows immediately from Lemma~\ref{lem:om1_dense}.
\end{proof}

\begin{remark}\label{rem:preserve-dense}
	Lemmas~\ref{lem:dense-equivalence} and~\ref{lem:F_pres}
	imply that to prove that a dense independent system $\mathcal A \in V$
	remains dense in a Cohen-preserving extension $W$,
	it is sufficient to demonstrate that in $W$ is 
	$\pw{\omega} = \la{\mathscr F_{\mathcal A}}^V\ra \cup \la {\mathscr C_{\mathcal A}} \ra$.
\end{remark}

\begin{proposition}\label{prop:sel_sys}
	Assume \textsf{CH}. There exists an independent system $\mathcal A$
	with the following properties:
	\begin{enumerate}
		\item\label{item:dense} $\mathcal A$ is dense,
		\item\label{item:F_prop} $\mathscr F_{\mathcal A}$ is a rare P-filter.
	\end{enumerate}
\end{proposition}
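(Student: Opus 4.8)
The construction is a transfinite recursion of length $\omega_1$ building the independent system $\mathcal A = \set{A_\alpha \mid \alpha < \omega_1}$ together with auxiliary bookkeeping that guarantees denseness and that $\mathscr F_{\mathcal A}$ is a rare P-filter. Under \textsf{CH} we fix enumerations $\set{X_\alpha \mid \alpha < \omega_1}$ of all subsets of $\omega$ (for denseness), $\set{(h_\alpha, X_\alpha) \mid \alpha<\omega_1}$ of all pairs (to meet the denseness requirement at each finite condition — but note denseness is a density requirement, so it suffices to meet it cofinally), and two families of potential ``diagonalization tasks'': sequences $\set{F^\alpha_n \mid n\in\omega}$ of finite unions of sets of the form $\mathcal A^h_{\restriction\alpha}$ for the rare P-filter requirement via Fact~\ref{lem:diagonal}, and sequences of sets for the non-meager-ness of $\mathscr F_{\mathcal A}$ if that is not automatic. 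At stage $\alpha$ we have built $\mathcal A_\alpha = \set{A_\beta \mid \beta<\alpha}$, a countable independent system, and we add one new set $A_\alpha$ (and possibly reset finitely much bookkeeping) so that after $\omega_1$ steps all tasks have been handled.

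First I would isolate the two independent sub-tasks. For \emph{denseness}: given a countable independent system $\mathcal B$, a finite condition $h\in\mathbf C_{\mathcal B}$, and a target $X\subseteq\omega$, I must be able to add a single new set $A$ to $\mathcal B$ so that in $\mathcal B\cup\set A$ there is an extension of $h$ (necessarily using the new coordinate $A$) that reaps $X$. Concretely: if $\mathcal B^h\cap X$ and $\mathcal B^h\setminus X$ are both infinite, put $A$ so that $A\cap\mathcal B^h =^* X\cap\mathcal B^h$ and $A$ behaves generically off $\mathcal B^h$ (e.g.\ split every $\mathcal B^g$ for $g\perp h$ into two infinite pieces, so that $\mathcal B\cup\set A$ stays independent); then $h\conc\sq{A\mapsto 1}$ hits $X$. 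If one of $\mathcal B^h\cap X$, $\mathcal B^h\setminus X$ is already finite, $h$ itself reaps $X$ and there is nothing to do — but we still add a generic $A$ to keep the recursion uniform. For the \emph{rare P-filter} property: recall $\mathscr F_{\mathcal A}$ is generated by the sets $F(D)=\bigcup\set{\mathcal A^h\mid h\in D}$ over dense $D\subseteq\mathbf C_{\mathcal A}$, and by Lemma~\ref{lem:Fa_filter-equivalence} membership in $\mathscr F_{\mathcal A}$ is equivalent to $\set{h\mid h\text{ hits }X}$ being dense. So I want to arrange, using Fact~\ref{lem:diagonal}, that for every countable $C\subseteq\mathscr F_{\mathcal A}$ there is $F\in\mathscr F_{\mathcal A}$ with $F\setminus(n+1)\subseteq F_n$ for all $n\in F$. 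By \textsf{CH} every such $C$ appears at some stage $\alpha$ already ``inside'' $\mathcal A_\alpha$ in the sense that the relevant $F_n$'s are finite unions of sets $\mathcal A^{h}_{\restriction\alpha}$; at that stage I add $A_\alpha$ so that one of its cylinders $\mathcal A_{\alpha+1}^{h\conc\sq{A_\alpha\mapsto i}}$ is almost contained in the required diagonal set — this is possible because each $F_n$ is co-infinite-or-cofinite on each small cylinder in a controlled way and I have a fresh coordinate to cut with. The same type of stage handles the non-meager / unbounded $\set{\epsilon_F}$ requirement: diagonalize against a given candidate bound $g\in\omega^\omega$ by choosing $A_\alpha$ so that some new cylinder has $\epsilon_{\mathcal A_{\alpha+1}^{h\conc i}}\not\le^* g$.

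The recursion then interleaves these: at stage $\alpha$, look up the $\alpha$-th task; if it is a denseness task $(h,X)$, choose $A_\alpha$ as above; if it is a filter task (a countable $C$ or a bound $g$), check whether the task is still ``live'' (the candidate set really is in $\mathscr F_{\mathcal A_\alpha}$, resp.\ $g$ really bounds the current $\epsilon$'s) and if so choose $A_\alpha$ to kill it, otherwise pick $A_\alpha$ generic. Crucially, each single step only has to decide $A_\alpha$ on finitely many already-named cylinders $\mathcal A_\alpha^h$ (those in the domain of the finite condition $h$ at hand and finitely many others for independence), and on the rest we let $A_\alpha$ split everything in two — this keeps $\mathcal A_{\alpha+1}$ an independent system and, by a standard fusion/density argument, ensures every task posed before stage $\alpha$ that mentions only coordinates $<\alpha$ is eventually addressed. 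After $\omega_1$ steps: $\mathcal A$ is independent by construction; denseness holds because for every $X$ (say $X=X_\alpha$) and every $h\in\mathbf C_{\mathcal A}$, $h$ has finite domain hence lies in $\mathbf C_{\mathcal A_\beta}$ for some $\beta$, and some later stage handling $(h,X)$ produced an extension reaping $X$, so $\set{h\mid h\text{ reaps }X}$ is dense; and $\mathscr F_{\mathcal A}$ is a rare P-filter because every countable $C\subseteq\mathscr F_{\mathcal A}$ and every $g\in\omega^\omega$ live in some $V_\alpha$-stage of the construction (here \textsf{CH} gives $\card{\mathbf C_{\mathcal A_\alpha}}=\aleph_0$, so only $\aleph_1$ many tasks total, hence a length-$\omega_1$ bookkeeping suffices) and were explicitly diagonalized, so Fact~\ref{lem:diagonal} (and Fact~\ref{fact:non-meager}) applies.

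\textbf{Main obstacle.} The delicate point is the simultaneous compatibility of the two kinds of requirements: when I add $A_\alpha$ to satisfy a filter-diagonalization task I am forcing one particular cylinder $\mathcal A_{\alpha+1}^{h\conc i}$ to be almost-contained in a prescribed diagonal set $F$, and I must check this does not sabotage denseness (it does not, because denseness is met cofinally often and each individual $A_\alpha$ is still ``generic'' on all cylinders not named by $h$) and, conversely, does not prevent future filter tasks — equivalently, that $\mathscr F_{\mathcal A}$ actually \emph{is} a filter extending all the chosen diagonal sets, which needs that the sets $\mathcal A^{h\conc i}$ I shrink into are genuinely in $\mathscr F_{\mathcal A}$ in the end. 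I expect to spend the real work verifying that ``being in $\mathscr F_{\mathcal A}$'' is preserved upward along the recursion (via Lemma~\ref{lem:Fa_filter-equivalence}: once $\set{h\mid h\text{ hits }X}$ is dense in $\mathbf C_{\mathcal A_\beta}$ it stays dense in $\mathbf C_{\mathcal A}$, since new coordinates only refine conditions), and in checking that the rare-P diagonal witness $F$ one constructs at stage $\alpha$ can always be taken \emph{inside} the filter generated so far — this is where one uses that each $F_n$ in the given sequence is itself a basic $\mathscr F_{\mathcal A_\alpha}$-set, i.e.\ contains some $\mathcal A_\alpha^{g_n}$, so the fresh coordinate $A_\alpha$ can be wired to land in $\bigcap$-shifts of these, exactly as in the classical construction of a rare P-point under \textsf{CH} adapted to the independent-system setting of~\cite{goldstern-shelah, vera-diana}.
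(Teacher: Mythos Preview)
Your treatment of denseness is close to what the paper does, but your plan for the rare P-filter property has a real gap, and the paper's fix is a mechanism you do not mention.

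The step ``add $A_\alpha$ so that one of its cylinders $\mathcal A_{\alpha+1}^{h\conc\langle A_\alpha\mapsto i\rangle}$ is almost contained in the required diagonal set $F$'' does not put $F$ into $\mathscr F_{\mathcal A}$. By Lemma~\ref{lem:Fa_filter-equivalence}, membership in $\mathscr F_{\mathcal A}$ requires $\{g : g \text{ hits } F\}$ to be \emph{dense}, not merely nonempty. A single cylinder inside $F$ gives nonemptiness only; worse, every condition $g$ with $g(A_\alpha) = 1-i$ has $\mathcal A^{g}$ disjoint from that cylinder, and no extension of such a $g$ can be made to hit $F$ via the coordinate $A_\alpha$. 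So one fresh coordinate cannot manufacture the needed density. Your ``main obstacle'' paragraph senses the difficulty, but the resolution you sketch (``each $F_n$ contains some $\mathcal A_\alpha^{g_n}$, so the fresh coordinate can be wired to land in $\bigcap$-shifts of these'') still produces only one cylinder inside $F$, not a dense family of them.

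The paper avoids this entirely. Alongside $\{A_\alpha\}$ it builds a $\subset^*$-decreasing tower $\{B_\alpha\}$ with $A_\alpha \subseteq B_\alpha$, arranges directly that this tower generates a rare P-filter (at each limit $\beta$ choose $B_\beta$ with $f_\beta < \overline\epsilon_{B_\beta}$ and $B_\beta \subset^* B_\gamma$ for all $\gamma < \beta$), and then proves the single claim that $\mathscr F_{\mathcal A}$ \emph{equals} the filter generated by $\{B_\alpha\}$. One inclusion comes from arranging, at limit stages, $B_\beta \subseteq F(H)$ for enumerated maximal antichains $H \subset \mathbf C_\beta$; the other from the density of $\{h : A_{\alpha+k} \in \dom h \text{ for some } k\}$ together with $A_{\alpha+k} \subseteq B_\alpha$. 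Once $\mathscr F_{\mathcal A}$ is literally a tower filter, the rare P-filter property is a routine property of the tower, and no ad hoc diagonal witness $F$ ever has to be inserted into $\mathscr F_{\mathcal A}$ by hand.
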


We call an independent system satisfying properties
(\ref{item:dense}) and (\ref{item:F_prop}) \emph{selective}.

\begin{proof}
	Enumerate the functions in $\omega^\omega$ as
	$\set{f_\alpha \mid \alpha \in \omega_1, \alpha \text{ limit} }$,
	enumerate maximal antichains in $\mathbf C_{\omega_1}$ as
	$\set{ H_\alpha \mid \alpha \in \omega_1, 0 < \alpha \text{ limit} }$ so that
	$H_\alpha \subset \mathbf C_\alpha$,
	and enumerate all elements of $\pw{\omega} \times \mathbf C_{\omega_1}$
	as $\set{ \la X_\alpha, g_\alpha \ra \mid \alpha \in \omega_1}$
	so that $g_\alpha \in \mathbf C_\alpha$. 

	We proceed by induction, for $\alpha < \omega_1$ we will define
	$\la A_\alpha, B_\alpha \mid \alpha < \omega_1 \ra$
	such that $A_\alpha \subset B_\alpha \subset^* B_\beta \subset \omega$  for $\beta < \alpha$, and
	$\bar{\mathcal A}_{\alpha} =  \la A_\beta \cap B_\alpha \mid \beta  < \alpha\ra$
	is an independent system.
	We write ${\mathcal A}_{\alpha} =  \la A_\beta\mid \beta  < \alpha\ra$.

	Start with $B_0$ such that $f_0 < \overline\epsilon_{B_0}$.
	If $\la A_\alpha, B_\alpha \mid \alpha < \beta \ra$  and $B_\beta$ are defined,
	let $B_{\beta + 1} = B_\beta$ and
	choose any  $A_{\beta} \subset B_\beta$ such that $\bar{\mathcal A}_{\beta+1}$
	is an independent system,
	this is possible since $\bar{\mathcal A}_{\alpha}$ is countable and hence not maximal.
	Moreover, letting
	$Z_\beta = {\bar{\mathcal A}_{\alpha}}^{g_\alpha}$,
	if it is possible to choose $A_{\beta}$
	such that $A_\beta \cap Z_\beta  = X_\beta  \cap Z_\beta$, do so.\footnote{
		We will be slightly abusing the notation, identifying $\mathbf C_{\bar{\mathcal A}_\alpha}$
		with $\mathbf C_{\alpha}$ etc.
	}

	Suppose $\la A_\alpha, B_\alpha \mid \alpha < \beta \ra$
	is defined and %for all $\alpha < \beta$, 
	$\beta$ is a limit ordinal.
	\begin{claim*}
		There is $B_\beta \subset \omega$ such that $B_\beta \subset^* B_\alpha$
		for $\alpha < \beta$, $f_\beta < \overline\epsilon_{B_\beta}$,
		$B_\beta \subset \bigcup
			\set{{\mathcal A}^h \mid h \in H_\alpha }$,
		and $\bar{\mathcal A}_\beta$ is an independent system.
	\end{claim*}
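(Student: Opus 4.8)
The plan is to build $B_\beta$ as a single $\subset^*$-pseudointersection of the $B_\alpha$, $\alpha<\beta$, that simultaneously threads the finitely-overlapping structure required by the three auxiliary demands. Since $\beta$ is a countable limit ordinal, fix an increasing cofinal sequence $\alpha_n \nearrow \beta$. The sets $B_{\alpha_n}$ form a $\subset^*$-decreasing $\omega$-sequence, so a diagonal (pseudointersection) argument produces some $B \subset^* B_{\alpha_n}$ for all $n$; the point of the Claim is to do this carefully enough that the extra three properties survive. I would construct $B_\beta$ by a recursion on blocks: pick a fast-growing sequence of integers $m_0 < m_1 < \cdots$ and let $B_\beta$ contain, inside each interval $[m_j, m_{j+1})$, exactly one pair of consecutive integers chosen from $B_{\alpha_j} \cap \bigcup\set{\mathcal A^h \mid h \in H_{\alpha_j}}$ with the two chosen points far apart in consecutive blocks — this single mechanism handles the pseudointersection (beyond stage $\alpha_n$ everything lies in $B_{\alpha_n}$), the domination $f_\beta < \overline\epsilon_{B_\beta}$ (the gaps between successive elements of $B_\beta$ can be forced to exceed $f_\beta$ at the corresponding index by choosing $m_{j+1}$ large), and containment in $\bigcup\set{\mathcal A^h \mid h \in H_{\alpha_j}}$ (we only ever select points from that union). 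The reason we can always find such points is that each $F(H_{\alpha_j}) = \bigcup\set{\mathcal A^h \mid h \in H_{\alpha_j}}$ is in $\mathscr F_{\mathcal A}$ hence infinite, and it meets $B_{\alpha_j}$ infinitely (here one uses that $\mathscr F_{\mathcal A_\alpha}$ and the filter generated by the $B_\alpha$'s are compatible — which holds because the $B_\alpha$'s were chosen so that $\overline\epsilon_{B_\alpha}$ dominates and, more to the point, because at every successor stage we kept $A_\beta \subset B_\beta$, so the independent-system filter lives "inside" the tower).

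The last requirement — that $\bar{\mathcal A}_\beta = \la A_\gamma \cap B_\beta \mid \gamma < \beta\ra$ be an independent system — is where the argument has content. Being an independent system means: for every finite $F \subset \beta$ and every $\sigma \colon F \to 2$, the set $B_\beta \cap \bigcap_{\gamma \in F} A_\gamma^{\sigma(\gamma)}$ is infinite. Now $\bigcap_{\gamma\in F} A_\gamma^{\sigma(\gamma)}$ is, up to a finite set, some $\mathcal A_\alpha^h$ for a large enough $\alpha<\beta$ (take $\alpha$ above $\max F$), so what we need is that $B_\beta$ meets every $\mathcal A^h$ in an infinite set. The clean way to guarantee this is to interleave yet another bookkeeping list into the block recursion: enumerate in type $\omega$ all pairs $(n, h)$ with $h \in \mathbf C_{\alpha_n}$ (countably many, since $\beta$ is countable and each $\mathbf C_{\alpha_n}$ is countable), and at block $j$, corresponding to pair $(n,h)$ with $n \le j$, insist that the two points we place in $[m_j,m_{j+1})$ additionally lie in $\mathcal A^h \cap B_{\alpha_j} \cap F(H_{\alpha_j})$. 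This intersection is infinite because $\mathcal A_{\alpha_j}$ is an independent system (so $\mathcal A^h$ is infinite even after intersecting with the other two sets, both of which are in $\mathscr F_{\mathcal A}$ — using density of $\mathcal A_\alpha$, or just that these are nonempty open sets whose union lies in the filter). Then every $\mathcal A^h$ gets hit infinitely often, so $\bar{\mathcal A}_\beta$ is independent.

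The main obstacle, and the step I would spend the most care on, is verifying the compatibility that makes all the infinite-intersection claims go through simultaneously: namely that for each $\alpha < \beta$ and each $h \in \mathbf C_\alpha$ the set $B_\alpha \cap \mathcal A^h \cap F(H_\alpha)$ is infinite. Infinitude of $\mathcal A^h \cap F(H_\alpha)$ is immediate (both are in the filter $\mathscr F_{\mathcal A}$, which is proper by maximality/density), but the presence of $B_\alpha$ is where the induction hypothesis must be used: one shows by the same block construction applied at earlier stages that each $B_\alpha$ was built to be $\mathscr F_{\mathcal A_\alpha}$-positive (indeed $A_\gamma \subset B_\gamma$ at successors keeps the tower "above" the independence filter, and at limit stages the Claim itself was arranged so $B_\alpha \subset F(H_{\alpha'})$ for cofinally many $\alpha'$). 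Granting this, each block in the recursion has a legitimate pair of points to choose, the recursion never gets stuck, and the resulting $B_\beta$ witnesses the Claim.
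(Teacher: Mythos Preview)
Your overall shape---a single diagonal construction, bookkeeping all finite Boolean combinations from $\mathbf C_\beta$, and spacing successive points far enough apart to dominate $f_\beta$---is exactly what the paper does. Two points, however, need correction.

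First, there is only one antichain in play. In the statement, $H_\alpha$ is the single maximal antichain assigned (by the global enumeration) to the current limit stage $\beta$; it is a fixed object, not a family $H_{\alpha_j}$ varying with $j$. So you should not be selecting points from different $F(H_{\alpha_j})$'s; every point of $B_\beta$ must lie in $\mathcal A^{h'}$ for some $h'$ in this one antichain. The paper handles this requirement and the independence requirement in one stroke: it enumerates with infinite repetition all $g_i \in \mathbf C_\beta$ that \emph{extend} some element of this antichain, arranged so that $\dom g_i \subset \alpha(i)$. Picking the $i$-th element of $B_\beta$ inside $\mathcal A^{g_i}$ then automatically lands inside $F(H_\alpha)$; and since the antichain is maximal, every $h \in \mathbf C_\beta$ is compatible with one of its elements, so the list of $g_i$'s refines every such $h$ infinitely often, which yields independence of $\bar{\mathcal A}_\beta$.

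Second, your justification that the sets you want to choose from are infinite is the real gap. You invoke $\mathscr F_{\mathcal A}$-positivity and density of $\mathcal A_\alpha$, but neither is available at this point of the construction: $\mathscr F_{\mathcal A}$ is defined only after the whole system is built, and the intermediate systems $\mathcal A_\alpha$ are certainly not dense. The correct and much simpler reason is the literal inductive hypothesis: ``$\bar{\mathcal A}_{\alpha(i)}$ is an independent system'' says exactly that $B_{\alpha(i)} \cap \mathcal A^{g}$ is infinite for every $g \in \mathbf C_{\alpha(i)}$. Since $\dom g_i \subset \alpha(i)$ and $B_{\alpha(j)} \supset^* B_{\alpha(i)}$ for $j \le i$, the set $C_i = \mathcal A^{g_i} \cap \bigcap_{j \le i} B_{\alpha(j)}$ is infinite, and one simply picks $\epsilon_{B_\beta}(i) \in C_i$ large enough to beat $f_\beta$. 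No filter-theoretic detour is needed.
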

	\begin{claimproof}
		Fix a sequence $\alpha(n)$ converging to $\beta$ and an enumeration
		$\set{g_i \mid i \in \omega}$ of all
		functions in $\mathbf C_\beta$ which extend some element of
		$H_\alpha$, with infinite repetitions, and
		so that $\dom h_i \subset {\alpha(i)}$ for each $i \in \omega$.

		Since the sets
		$C_i =
			{\mathcal A_\beta}^{g_i}
			\cap \bigcap\set{ B_{\alpha(j)} \mid j \leq i}$
		are infinite for all $i \in \omega$,
		it is possible to choose infinite $B_\beta$ such that
		$\epsilon_{B_\beta}(i) \in C_i$ and
		$f_\beta < \overline\epsilon_{B_\beta}$.
		This is as required since $\bar{\mathcal A}_\beta$ is an independent system.
	\end{claimproof}\claimdone\medskip

	This completes the inductive construction.
	We constructed an independent system
	$\mathcal A = \set{A_\alpha \mid \alpha \in \omega_1}$.
	To check that it is dense take any
	$\la X_\beta, g_\beta \ra \in \pw{\omega} \times \mathbf C_{\omega_1}$.
	If $A_\beta$ was chosen so that
	$A_\beta \cap Z_\beta  = X_\beta  \cap Z_\beta$, we are done.
	If $A_\beta$ was not chosen with this property,
	there is some $g \in \mathbf C_{\beta}$, $g_\beta \subset g$
	such that
	${\mathcal A}^g$
	reaps $X_\beta \cap B_\beta$ and we are also done,
	as we can extend $g$ by declaring $g \colon \beta \mapsto 1$
	to achieve ${\mathcal A}^g \subset B_\beta$.

	The inductive construction ensures that the filter generated by the decreasing tower
	$\mathcal T = \set{ B_\alpha \mid \alpha \in \omega_1 }$
	is a rare P-filter.

	\begin{claim*}
		The filter $\mathscr F_{\mathcal A}$ is the filter
		generated by $\mathcal T$.
	\end{claim*}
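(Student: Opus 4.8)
The plan is to show the two inclusions $\mathscr F_{\mathcal A} \subseteq \sq{\mathcal T}$ and $\sq{\mathcal T} \subseteq \mathscr F_{\mathcal A}$. For the inclusion $\sq{\mathcal T} \subseteq \mathscr F_{\mathcal A}$, I would use the limit-stage claim: at each limit ordinal $\beta$ we arranged $B_\beta \subseteq \bigcup \set{ \mathcal A^h \mid h \in H_\alpha }$ for all relevant $\alpha$ (with the enumeration chosen so that each maximal antichain $H_\alpha$ gets handled cofinally often below $\beta$). Since the maximal antichains in $\mathbf C_{\omega_1}$ are cofinal among the dense sets — every dense $D$ contains a maximal antichain, and refining by a maximal antichain only shrinks the generated set $F(D)$ — it follows that for every dense $D \subseteq \mathbf C_{\mathcal A}$ there is a maximal antichain $H_\alpha$ refining $D$, and then $B_\beta \subseteq F(H_\alpha) \subseteq F(D)$ for cofinally many $\beta$. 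Hence each generator $F(D)$ of $\mathscr F_{\mathcal A}$ almost-contains a tail of $\mathcal T$, so $F(D) \in \sq{\mathcal T}$; thus $\sq{\mathcal T} \subseteq \mathscr F_{\mathcal A}$. (Here I am being slightly careless about $\mathbf C_{\mathcal A}$ versus $\mathbf C_{\omega_1}$, as the footnote in the construction already warns.)

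For the reverse inclusion $\mathscr F_{\mathcal A} \subseteq \sq{\mathcal T}$, the point is that each $B_\alpha$ itself lies in $\mathscr F_{\mathcal A}$, equivalently (by Lemma~\ref{lem:Fa_filter-equivalence}) that $\set{ h \mid h \text{ hits } B_\alpha }$ is dense in $\mathbf C_{\mathcal A}$. Given any $h \in \mathbf C_{\mathcal A}$, say $h \in \mathbf C_\beta$ with $\beta > \alpha$, I would extend $h$ by setting $h' = h \cup \set{ \la A_\beta, 1 \ra }$ whenever $\beta \geq \alpha$ is large enough; since $A_\beta \subseteq B_\beta \subseteq^* B_\alpha$ we get $\mathcal A^{h'} \subseteq^* A_\beta \subseteq^* B_\alpha$, so $h'$ hits $B_\alpha$. (One iterates this finitely often, or just picks a single index above $\max(\dom h) \cup \set\alpha$, using that the $B$'s are $\subseteq^*$-decreasing.) Hence $B_\alpha \in \mathscr F_{\mathcal A}$ for each $\alpha < \omega_1$, so $\sq{\mathcal T} \subseteq \mathscr F_{\mathcal A}$, and combined with the previous paragraph the two filters coincide.

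The main obstacle I expect is the bookkeeping in the first inclusion: one must verify that the chosen enumeration of the $H_\alpha$ really does make every dense set of $\mathbf C_{\mathcal A}$ refined by some $H_\alpha$ that was used cofinally in the limit construction, and that the identification of $\mathbf C_{\bar{\mathcal A}_\alpha}$ with $\mathbf C_\alpha$ (and of dense sets with maximal antichains below them) does not lose anything — in particular that $F(H_\alpha) \subseteq F(D)$ when $H_\alpha$ is a maximal antichain contained in the open dense set generated by $D$. The $\subseteq^*$-bookkeeping in the limit claim (threading $B_\beta$ through all the $C_i$ while simultaneously staying inside the relevant $\mathcal A^{g_i}$ and keeping $\bar{\mathcal A}_\beta$ independent) is the only other delicate point, but that was already discharged in the claim proved during the construction, so here it is only invoked.
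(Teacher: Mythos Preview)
Your plan is correct and follows the same two-inclusion strategy as the paper. Two minor remarks:

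First, your labels are swapped throughout: showing that every generator $F(D)$ contains a tail of $\mathcal T$ gives $\mathscr F_{\mathcal A} \subseteq \sq{\mathcal T}$, and showing $B_\alpha \in \mathscr F_{\mathcal A}$ gives $\sq{\mathcal T} \subseteq \mathscr F_{\mathcal A}$, not the other way around.

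Second, the bookkeeping is simpler than you describe. In the construction each maximal antichain $H_\alpha$ is handled exactly once, at the limit stage $\beta = \alpha$ (the ``$H_\alpha$'' in the limit-stage claim should read ``$H_\beta$''), not cofinally often. The paper then argues: given dense $D$, choose a maximal antichain $H_\beta \subseteq D$ (possible since every maximal antichain is countable and hence appears in the enumeration); then $B_\beta \subseteq F(H_\beta) \subseteq F(D)$, so $F(D) \in \sq{\mathcal T}$. Your ``cofinally many $\beta$'' is of course true a posteriori since the $B_\gamma$ are $\subseteq^*$-decreasing, but one only needs the single stage $\beta$.

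For the direction $B_\alpha \in \mathscr F_{\mathcal A}$ your argument via Lemma~\ref{lem:Fa_filter-equivalence} (extend any $h$ by $h'(A_\beta) = 1$ for some $\beta > \alpha$ outside $\dom h$, so $\mathcal A^{h'} \subseteq A_\beta \subseteq B_\beta \subseteq^* B_\alpha$) is exactly right and in fact slightly cleaner than the paper's: the dense set $D = \set{h \mid \exists k\, A_{\alpha+k} \in \dom h}$ written there does not literally satisfy $F(D) \subseteq B_\alpha$ without also requiring $h(A_{\alpha+k}) = 1$, which is the fix your version implicitly makes.
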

	\begin{claimproof}
		For $\alpha \in \omega_1$ let
		$D = \set{h \in \mathbf C_{\mathcal A} \mid \text{ there is } k \in \omega \text{ such that }
				A_{\alpha + k} \in \dom h}$.
		The set $D$ is dense in $\mathbf C_{\mathcal A}$ and $F(D) \subset B_\alpha$
		is witnessing $B_\alpha \in \mathscr F_{\mathcal A}$.

		On the other hand take any dense $D \subset \mathbf C_{\mathcal A}$.
		There is some $\beta \in  \omega_1$ such that $H_\beta \subset D$.
		Since each element of $D$ is compatible with some element of $H_\beta$,
		we have that $F(H_\beta) \subset F(D)$.
		The set $B_\beta$ was chosen so that $B_\beta \subset F(H_\beta)$.
	\end{claimproof}
\end{proof}

\begin{theorem}\label{thm:onestep_preserv}
	Let $\mathcal A$ be a selective independent system and
	let $\mathcal I$ be a maximal ideal.
	If $G$ is a $\mathbf Q_{\mathcal I}$-generic filter,
	then $\mathcal A$ is a selective independent system in $V[G]$.
\end{theorem}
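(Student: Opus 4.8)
The plan is to verify that both defining properties of a selective independent system—density and the rare P-filter property of $\mathscr F_{\mathcal A}$—are preserved by $\mathbf Q_{\mathcal I}$. Since $\mathbf Q_{\mathcal I}$ has the Sacks property (Proposition~\ref{prop:proper}), it is bounding and Cohen-preserving (the Corollary following Proposition~\ref{prop:proper}), and it is proper. We already know from the general remarks in Section~\ref{sec:prelim} that a rare P-filter generates a rare P-filter in a bounding proper extension: the rare (= $\overline\epsilon$-dominating) property is preserved by bounding forcings, the non-meager property likewise, and the P-property by proper forcings. Combined with Lemma~\ref{lem:F_pres}, which tells us $\mathscr F_{\mathcal A}^{V[G]}$ is generated by $\mathscr F_{\mathcal A}^V$, property~(\ref{item:F_prop}) transfers immediately. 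So the real content is preserving density, property~(\ref{item:dense}).

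For density I would invoke Remark~\ref{rem:preserve-dense}: since $V[G]$ is Cohen-preserving, it suffices to show that in $V[G]$ we have $\pw{\omega} = \la \mathscr F_{\mathcal A}^V \ra \cup \la \mathscr C_{\mathcal A} \ra$, i.e.\ every new subset $X \subseteq \omega$ either contains some $\mathcal A^h$ ($h \in \mathbf C_{\mathcal A} \cap V$) mod finite, or is almost disjoint from some such $\mathcal A^h$. Fix a $\mathbf Q_{\mathcal I}$-name $\dot X$ for a subset of $\omega$ and a condition $T$. Using Corollary~\ref{cor:nice_name}, pass to a condition $S < T$ such that for every $n$ and every $f \in 2^{n+1}$, $S_f$ decides ``$n \in \dot X$''. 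Now run the fusion/game argument of Proposition~\ref{prop:proper} essentially verbatim, but while playing the game in $M \prec H(\theta)$ (with $\mathcal A, \mathcal I, \dot X, S \in M$) additionally record, for each branching level $n = \min e^{T_k}$ of the approximating conditions, the ``bookkeeping'' value of $\dot X$ below $n$ forced by the relevant finite restrictions. The key point is that the partition $E^T$ of the final condition $Q$ is an $\mathcal I$-partition, hence $\dom E^Q \in \mathcal I^* = \mathcal F^*$ where $\mathcal F = \la \mathcal I \ra$; moreover each $e^Q_k \in \mathcal I$, and the restriction of $\dot X$ to a given $e^Q_k$-block is decided by $Q_f$ for $f$ ranging over the finitely many values on the $k$-th coordinate.

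The mechanism for locating a witnessing $h$ is the following: because $\mathcal A$ is dense in $V$, for the set $Y \subseteq \omega$ coding ``which blocks $e^Q_k$ the generic real $r$ enters'' there is $h \in \mathbf C_{\mathcal A} \cap V$ with $\mathcal A^h \subset^* Y$ or $\mathcal A^h \cap Y =^* \emptyset$; but more to the point, one uses density of $\mathcal A$ relative to the *countably many* dense sets recording the decisions of $Q_f$ about $\dot X$, together with the fact that $\mathscr F_{\mathcal A}^V$ is a P-filter, to diagonalize over the blocks and produce a single $h \in \mathbf C_{\mathcal A} \cap V$ such that $Q$ forces $\mathcal A^h \subset^* \dot X$ or $\mathcal A^h \cap \dot X =^* \emptyset$. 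Concretely: for each $k$, the finitely many conditions $Q_f$ ($f \in 2^{\,|\{0,\dots,k\}|}$ restricted to coordinates $\le k$) determine a finite subset of $\omega$ on which the value of $\dot X$ is pinned down; collecting these into a function and applying the rare P-filter property of $\mathscr F_{\mathcal A}^V$ (Fact~\ref{lem:diagonal}), one finds $F \in \mathscr F_{\mathcal A}^V$—hence $F \supseteq^* \mathcal A^h$ for suitable $h$—homogeneous enough that $Q \Vdash \mathcal A^h \subset^* \dot X$ or $Q \Vdash \mathcal A^h \cap \dot X =^* \emptyset$. By genericity and Remark~\ref{rem:preserve-dense}, $\mathcal A$ stays dense in $V[G]$.

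The main obstacle is the interleaving of two fusions: the $\mathrm{PG}(\mathcal I)$-game fusion that builds $Q$ as a legitimate $\mathcal I$-suitable tree, and the diagonalization over $\mathscr F_{\mathcal A}^V$ that pins down $\dot X$ block-by-block. One must arrange the enumeration of dense sets $D \subseteq \mathbf C_{\mathcal A}$ and the bookkeeping of $\dot X$-decisions so that, on each $e^Q_k$-block, only finitely much information about $\mathcal A$ is consulted, while simultaneously the game strategy of player~I (which is not winning, so player~II can force $r \in \mathcal I^*$) still yields $\dom E^Q \in \mathcal I^*$. Getting these two demands to cohere—so that the filter $\mathscr F_{\mathcal A}^V$ is queried along a set that is genuinely in $\mathscr F_{\mathcal A}^V$, allowing Fact~\ref{lem:diagonal} to apply—is the crux; once the combinatorial skeleton is set up correctly, the verification that $Q$ forces the desired reaping statement is routine.
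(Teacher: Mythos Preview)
Your treatment of the rare P-filter property is correct, but the density argument has a genuine gap rooted in a misreading of the target. You write that $\pw{\omega} = \la \mathscr F_{\mathcal A}^V\ra \cup \la \mathscr C_{\mathcal A}\ra$ means ``every $X$ either contains some $\mathcal A^h$ mod finite, or is almost disjoint from some $\mathcal A^h$.'' That is not what $X \in \la \mathscr F_{\mathcal A}^V\ra$ means: membership in the filter requires that the set of $h$ hitting $X$ be \emph{dense} in $\mathbf C_{\mathcal A}$ (Lemma~\ref{lem:Fa_filter-equivalence}), not merely nonempty. A single $h$ with $\mathcal A^h \subset^* X$ gives no information---indeed $\mathcal A^h$ itself lies in $\la \mathscr C_{\mathcal A}\ra$, not in the filter. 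So your proposed endpoint ``$Q \Vdash \mathcal A^h \subset^* \dot X$ or $Q \Vdash \mathcal A^h \cap \dot X =^* \emptyset$'' for one $h$ does not decide the dichotomy; what you have described amounts to preserving \emph{maximality} of $\mathcal A$, which is strictly weaker than density.

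The paper's proof is organized quite differently and does not rerun the $\mathrm{PG}(\mathcal I)$ fusion. It first makes the case split: either some $S<T$ already forces $\dot X \in \la \mathscr C_{\mathcal A}\ra$, or else $X_S = \{n : S \not\Vdash n\notin \dot X\}\in \mathscr F_{\mathcal A}$ for \emph{every} $S<T$. This standing hypothesis is exactly what places the sets $X_n = \bigcap_{f\in 2^n} X_{T_f}$ inside $\mathscr F_{\mathcal A}$, so that the diagonal property (Fact~\ref{lem:diagonal}) yields an $F\in \mathscr F_{\mathcal A}$ along which one can build auxiliary conditions $T_n(k) <_{\omega\setminus[n,k)} T$ forcing $k\in \dot X$. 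Then, for an \emph{arbitrary} given $h$, density of $\mathcal A$ provides two disjoint $\mathcal A^{g_0}, \mathcal A^{g_1}\subset^* F$ with $g_0,g_1 \supset h$, and maximality of $\mathcal I$ is invoked directly---not through the game---to ensure that at least one of the intersections $Q_i = \bigcap\{T_{k_n}(k_{n+1}) : k_{n+1}\in \mathcal A^{g_i}\}$ is an $\mathcal I$-suitable tree; that $Q_i$ then forces $\mathcal A^{g_i}\subset^* \dot X$. Since $h$ was arbitrary, the hitting set is dense below $T$, giving $\dot X \in \la \mathscr F_{\mathcal A}\ra$. The two missing ingredients in your sketch are thus the initial case split (without which the diagonalization has no reason to stay inside $\mathscr F_{\mathcal A}$) and the direct splitting-by-maximality trick that produces a genuine condition.
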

\begin{proof}
	The system $\mathcal A$ remains independent in any extension.
	Since $\mathbf Q_{\mathcal I}$ is Cohen-preserving,
	Lemma~\ref{lem:F_pres} states that, in $V[G]$, the filter
	${\mathscr F_{\mathcal A}}^{V[G]}$ is generated by
	${\mathscr F_{\mathcal A}}^V$.
	Thus the filter ${\mathscr F_{\mathcal A}}^{V[G]}$ is a P-filter
	since $\mathbf Q_{\mathcal I}$ is proper, and
	it is rare since $\mathbf Q_{\mathcal I}$ has the Sacks property.
	To show that $\mathcal A$ remains dense in the extension we will use
	Remark~\ref{rem:preserve-dense}.

	Let $T \in \mathbf Q_{\mathcal I}$ be a condition
	and $X$ a name for a subset of $\omega$.
	Suppose that no stronger condition forces that
	$X \in \la \mathscr C_{\mathcal A} \ra$,
	i.e.\ for each $S < T$, 
	$X_S = \set{ n \in \omega \mid 
			S \not\Vdash n \notin X }
		\in \mathscr F_{\mathcal A}$.
	We will show that such $T$ forces that $X \in \la \mathscr F_{\mathcal A}\ra$.
	In particular, for given $h \in \mathbf{C}_{\mathcal A}$ we find
	$g \supset h$ and $Q < T$ such that $Q \Vdash \mathcal A^{g} \subset^* X$.

	We may assume that for each $n \in \omega$ and $f \in 2^n$
	the condition $T_f$ decides $X \cap n$ (use Corollary~\ref{cor:nice_name}).
	For $n \in \omega$ put
	$X_n = \bigcap \set{ X_{T_f} \mid f \in 2^n } \in \mathscr F_{\mathcal A}$.
	Note that for each $n \in \omega$ and each $k \in X_n \setminus (n+1)$ there is a condition
	$T_n(k) <_{\omega \setminus [n, k)} T$
	such that $T_n(k) \Vdash k \in X$.
	The filter $\mathscr F_{\mathcal A}$ has the diagonal property, i.e.\ there is
	$F \in \mathscr F_{\mathcal A}$ such that $F \setminus (n+1) \subseteq X_n$
	for each $n \in F$.
	Let $\set{ k_n \mid n \in \omega }$ be the increasing enumeration of such an $F$.
	The choice of $F$ ensures that for each $n \in \omega$
	the condition $T_{k_n}(k_{n+1})$ is defined.

	Since $\mathcal A$ is dense, there are $g_0, g_1 \supset h$ such that
	$\mathcal A^{g_0} \cup \mathcal A^{g_1} \subset^* F$, and
	$\mathcal A^{g_0} \cap \mathcal A^{g_1} = \emptyset$.
	For $i \in 2$ put
	$Q_i = \bigcap \set{ T_{k_n}(k_{n+1}) \mid k_{n+1} \in \mathcal A^{g_i} }$.
	The sets
	$d_i = \bigcup \set{ [k_n, k_{n+1}) \mid k_{n+1} \in \mathcal A^{g_i} }$
	are disjoint for $i \in 2$,
	therefore for at least one $i \in 2$ is
	$\overline{d_i} = \bigcup \set{ e_k^T \mid k \in d_i } \in \mathcal I$.
	For this $i$ is $Q_i \in \mathbf Q_{\mathcal I}$.
	To check this, notice that
	for $k \in \omega$, $e^{Q_i}_k \in E^{Q_i}$
	there is some $e' \in E^T$ such that 
	$e^{Q_i}_k \subseteq e' \cup \overline{d_i} \cup (\omega \setminus \dom E^T)$.
	Moreover $\dom E^T \subset \dom E^{Q_i} \cup \overline{d_i}$,
	thus $\dom E^{Q_i} \in \mathcal{I}^*$.
	Since $Q_i < T_{k_n}(k_{n+1})$ for each $k_{n+1} \in \mathcal A^{g_i}$,
	and all but finitely many elements of $\mathcal A^{g_i}$ are of the form $k_{n+1}$,
	we have that $Q_i \Vdash \mathcal A^{g_i} \subset^* X$.
\end{proof}

Let $\mathcal A$ be a dense independent system and let $B$ be a free sequence.
We say that $B$ is a \emph{free sequence associated} with $\mathcal A$ if
$B$ is a maximal free sequence and $B$ generates the filter $\mathscr F_{\mathcal A}$.

\begin{theorem}\label{thm:FSpreserv}
	Let $B$ be a maximal free sequence associated with
	a dense independent system $\mathcal A$ in a model of set theory $V$.
	Let $W$ be a Cohen-preserving extension of $V$ such that $\mathcal A$
	remains dense in $W$.
	Then $B$ is a maximal free sequence associated with
	$\mathcal A$ in $W$.
\end{theorem}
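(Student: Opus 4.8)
The plan is to verify the two requirements of ``free sequence associated with $\mathcal{A}$'' separately in $W$: first that $B$ still generates the filter $\mathscr{F}_{\mathcal{A}}$ (computed in $W$), and second that $B$ remains a maximal free sequence in $W$. The first point is the easy half. Since $B$ generates $\mathscr{F}_{\mathcal{A}}^V$ in $V$ and $\mathcal{A} \in V$, Lemma~\ref{lem:F_pres} tells us that $\mathscr{F}_{\mathcal{A}}^W$ is generated by $\mathscr{F}_{\mathcal{A}}^V$; hence $B$, which generates $\mathscr{F}_{\mathcal{A}}^V$, also generates $\mathscr{F}_{\mathcal{A}}^W$. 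Being a free sequence is an absolute property (it only asserts that certain finite intersections of elements of $B$ and their complements are infinite), so $B$ is still a free sequence in $W$. What remains is maximality.

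For maximality, the key observation is that, because $\mathcal{A}$ is dense in $W$, Lemma~\ref{lem:dense-equivalence} gives $\pw{\omega}^W = \la \mathscr{F}_{\mathcal{A}}^W \ra \cup \la \mathscr{C}_{\mathcal{A}} \ra$ in $W$. Take any $z \subseteq \omega$ in $W$; I want to show $B \conc \la z \ra$ is not a free sequence, i.e.\ $\comb{B}\cup\{z^1\}$ or $\comb{B}\cup\{z^0\}$ fails to be centered — equivalently, $z$ is not positive with respect to the filter $\la \comb{B} \ra$ generated by $B$ or $z^0$ is not. If $z \in \la \mathscr{C}_{\mathcal{A}} \ra$, then $\omega\setminus z \supseteq^* \mathcal{A}^h$ for some finite $h$, so $z^0 \supseteq^* \mathcal{A}^h \in \mathscr{F}_{\mathcal{A}}^W = \la \comb{B}\ra$, meaning $z^0$ lies in the filter generated by $B$; but then $z$ is not positive with respect to that filter — wait, we need the other direction, so instead one argues: $z^0$ being in the filter $\la \comb B\ra$ forces $z \conc \la z\ra$ to fail the centeredness condition at $\beta = \gamma$, since $\bigcap_{\alpha\in\Gamma} a_\alpha \cap z$ will be finite for a suitable finite $\Gamma$ (namely one witnessing $\mathcal{A}^h \subseteq^* z^0$). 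Symmetrically, if $z^0 \in \la \mathscr{C}_{\mathcal{A}}\ra$, the same reasoning applied to $z^0$ shows $z$ is in $\la\comb B\ra$, and then the centeredness condition at $\beta = 0$ fails, since $\bigcap_{\alpha\in\Delta} a_\alpha^0 \cap z^0$ is finite for suitable finite $\Delta < $ everything. In either case $B\conc\la z\ra$ is not free.

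The one case this dichotomy leaves open is when both $z \in \la \mathscr{F}_{\mathcal{A}}^W\ra$ and $z^0 \in \la \mathscr{F}_{\mathcal{A}}^W \ra$ — but a proper filter cannot contain both a set and its complement, so this never happens; the partition $\pw{\omega}^W = \la\mathscr{F}_{\mathcal{A}}^W\ra \cup \la\mathscr{C}_{\mathcal{A}}\ra$ is into disjoint pieces (a set is in the filter or its complement is, by the co-filter description), so exactly one of $z, z^0$ falls on the $\mathscr{C}_{\mathcal{A}}$ side, and the above handles it. The main obstacle I anticipate is bookkeeping: making sure that ``$z^0$ lies in $\la\comb B\ra$'' genuinely obstructs $z$ end-extending $B$ as a free sequence, which requires recalling that $\la\comb B\ra$ is a proper filter (so $z^0 \in \la\comb B\ra$ implies $z$ is not positive, hence $\comb B \cup \{z^1\}$ is not centered) together with the fact that $B$ is maximal already in $V$ — actually, since $B$ generates the ultrafilter-or-not filter $\mathscr{F}_{\mathcal{A}}$ and we only need non-centeredness, the maximality of $B$ in $V$ is not even invoked; the density of $\mathcal{A}$ in $W$ does all the work. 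This is exactly the content flagged in Remark~\ref{rem:preserve-dense}, so the proof should be short: invoke Lemma~\ref{lem:F_pres} for the filter, invoke Lemma~\ref{lem:dense-equivalence} (in $W$) for the covering dichotomy, and conclude.
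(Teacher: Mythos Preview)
There is a genuine gap. Your key step asserts that $\mathcal A^h \in \mathscr F_{\mathcal A}^W$, and from this you conclude that $z \in \la \mathscr C_{\mathcal A}\ra$ implies $z^0 \in \la\comb{B}\ra$. But $\mathcal A^h$ is \emph{never} in $\mathscr F_{\mathcal A}$: by Lemma~\ref{lem:Fa_filter-equivalence} that would require $\set{g \mid g \text{ hits } \mathcal A^h}$ to be dense in $\mathbf C_{\mathcal A}$, whereas any $g$ incompatible with $h$ satisfies $\mathcal A^g \cap \mathcal A^h =^* \emptyset$. The underlying confusion is between the co-filter $\pw\omega \setminus \mathscr F_{\mathcal A}$ (which is what $\mathscr C_{\mathcal A}$ generates) and the dual ideal $\mathscr F_{\mathcal A}^*$. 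Knowing $z \in \la \mathscr C_{\mathcal A}\ra$ only tells you $z \notin \mathscr F_{\mathcal A}$; it does \emph{not} tell you $z^0 \in \mathscr F_{\mathcal A}$, since $\mathscr F_{\mathcal A}$ is not an ultrafilter. For the same reason your claim that ``exactly one of $z, z^0$ falls on the $\mathscr C_{\mathcal A}$ side'' is false: typically both do.

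The remark that ``the maximality of $B$ in $V$ is not even invoked'' is exactly where the argument breaks; that hypothesis is essential. The paper's proof runs as follows. If $X \in \mathscr F_{\mathcal A}^*$ then $B \cup \set{X}$ is not centered and we are done. Otherwise, since $\mathcal A$ is dense in $W$, some $h$ hits $X$, i.e.\ $\mathcal A^h \subset^* X$. Now $\mathcal A^h$ lies in $V$ and is $\mathscr F_{\mathcal A}$-positive, so maximality of $B$ \emph{in $V$} applied to $\mathcal A^h$ yields $b \in \comb{B}$ with $b \subset^* \mathcal A^h \subset^* X$; hence $b \cap X^0 =^* \emptyset$ and $B$ cannot be end-extended by $X$. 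The point is that density of $\mathcal A$ lets you replace the new set $X$ by a ground-model set $\mathcal A^h$ below it, and then ground-model maximality finishes the job.
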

\begin{proof}
	Lemma~\ref{lem:F_pres} states that $\mathscr F_{\mathcal A} \cap V$
	generates $\mathscr F_{\mathcal A}$ in $W$ so it remains to show that
	$B$ is a maximal free sequence in $W$.
	Take $X \subset \omega$ in $W$, we need to show that $B$ cannot be
	end-extended by $X$.
	Let $\mathscr F_{\mathcal A}^*$ be the ideal dual to $\mathscr F_{\mathcal A}$. 
	If $X \in \mathscr F_{\mathcal A}^*$, we are done so suppose this is not the case.
	Since $\mathcal A$ is dense in $W$ 
	% (the set $\set{ h \in \mathbf{C}_{\mathcal A} \mid h \text{ reaps } X }$ is dense) 
	% and $X \notin \mathscr F_{\mathcal A}^*$, 
	we have that $X \notin \mathscr F_{\mathcal A}^*$ iff 
	there exists $h \in \mathbf C_{\mathcal A}$
	such that $\mathcal A^h \subset^* X$ ($h$ hits $X$). 
	As $\mathcal A^h \in V$, $\mathcal A^h \notin \mathscr F_{\mathcal A}^*$,
	and $B$ cannot be end-extended by $\mathcal A^h$,
	there is $b \in \comb{B}$ such that $b \subset^* \mathcal A^h$.
	Now $X^0 \cap b =^* \emptyset$ witnesses that $B$ cannot be
	end-extended by $X$.
\end{proof}

%%%  we don't need this
% \begin{corollary}
% 	Let $\mathcal A$ be a selective independent system and let $\mathcal I$ be a maximal ideal. 
% 	If $G$ is a $\mathbf Q_{\mathcal I}$-generic filter, 
% 	then every free sequence $B \in V$ associated with $\mathcal{A}$ 
% 	remains associated with $\mathcal{A}$ in $V[G]$.
% \end{corollary}

\begin{proposition}
	Assume $\mathfrak t = \mathfrak c$
	and let $\mathcal T$ be a tower. 
	There is a maximal decreasing free sequence
	$ \set{a_\alpha \mid \alpha \in \mathfrak c}$
	which is cofinal with $\mathcal T$.
\end{proposition}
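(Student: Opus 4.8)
The plan is to upgrade a suitable ``interpolation'' of $\mathcal T$ into a maximal free sequence by a transfinite recursion of length $\mathfrak c$, where $\mathfrak t=\mathfrak c$ gets us through limit stages and the maximality of $\mathcal T$ handles the one delicate step.

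I would first record two preliminaries. Since $\mathfrak t=\mathfrak c$ (and $\mathfrak t$ is regular, so $\mathfrak c$ is regular), a maximal tower can have neither successor length (an infinite coinfinite subset of its last element would end-extend it) nor cofinality $<\mathfrak t$ (a pseudo-intersection would end-extend it); as $|\mathcal T|\le\mathfrak c$, this forces $\mathcal T=\la T_\xi\mid\xi<\mathfrak c\ra$. Write $\mathcal F$ for the filter generated by $\mathcal T$ and $\mathcal F^+$ for the family of $\mathcal F$-positive sets. Second, for a strictly $\subset^*$-decreasing $A=\la a_\alpha\mid\alpha<\gamma\ra$ one computes $\comb{A}=\{\omega\}\cup\{a_\alpha\mid\alpha<\gamma\}\cup\{a_\alpha^0\mid\alpha<\gamma\}\cup\{a_\alpha\setminus a_\beta\mid\alpha<\beta<\gamma\}$, and unwinding the definition of a free sequence gives: $A\conc\la z\ra$ is a free sequence if and only if (i)~$z\in\la A\ra^+$, (ii)~$z\notin\la A\ra$, (iii)~$\omega\setminus(a_\alpha\cup z)$ is infinite for every $\alpha$, and (iv)~$(a_\alpha\setminus a_\beta)\setminus z$ is infinite for all $\alpha<\beta$.

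The construction is a recursion building a strictly $\subset^*$-decreasing $\la a_\alpha\mid\alpha<\mathfrak c\ra$, together with an enumeration ${[\omega]}^\omega=\{z_\xi\mid\xi<\mathfrak c\}$, under two invariants: every $a_\alpha$ almost contains some element of $\mathcal T$, and cofinally often the value $a_\alpha$ belongs to $\mathcal T$. These invariants guarantee that $\la a_\alpha\mid\alpha<\mathfrak c\ra$ and $\mathcal T$ are cofinal in one another, hence generate the same filter $\mathcal F$, and $\mathfrak c$ regular keeps the length equal to $\mathfrak c$. At stage $\xi$ the current last value is a tower element $a^*=T_\delta$ with $\delta$ beyond $\xi$ and beyond all earlier milestones. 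If $z_\xi\notin\mathcal F^+$ or $z_\xi\in\mathcal F$ I just append one deeper tower tail $T_{\delta_\xi}$; nothing more is needed, since the cofinality of $\la a_\alpha\ra$ with $\mathcal T$ will eventually witness the failure of (i) (a tower element, hence some $a_\alpha$, almost disjoint from $z_\xi$) or of (ii) (some $a_\alpha$ almost inside $z_\xi$). The critical case is $z_\xi\in\mathcal F^+\setminus\mathcal F$; then both $z_\xi$ and $z_\xi^0$ are $\mathcal F$-positive, so $a^*\cap z_\xi$ and $a^*\cap z_\xi^0$ are both infinite. Since $\mathcal T$ is maximal it has no infinite pseudo-intersection, so for every sufficiently deep $\delta_\xi$ (so that $T_{\delta_\xi}$ is almost inside $a^*$) both $(a^*\cap z_\xi)\setminus T_{\delta_\xi}$ and $(a^*\cap z_\xi^0)\setminus T_{\delta_\xi}$ remain infinite. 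Fixing such a $\delta_\xi$ (past $\xi$ and the earlier milestones), I append first $b_1:=T_{\delta_\xi}\cup(a^*\cap z_\xi)$ and then $b_2:=T_{\delta_\xi}$: one checks that $b_1$, resp.\ $b_2$, is strictly smaller mod finite than $a^*$, resp.\ $b_1$, that the slice $b_1\setminus b_2=(a^*\cap z_\xi)\setminus T_{\delta_\xi}$ is a subset of $z_\xi$, and that $b_2\in\mathcal T$ serves as the next milestone. At a limit stage $\xi$ every set built so far almost contains $T_\eta$, where $\eta<\mathfrak c$ is the supremum of the earlier milestones, so $T_\eta$ is an actual infinite pseudo-intersection; I put $a^*:=T_\eta$ and then process $z_\xi$ exactly as above. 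Thus the recursion never stalls.

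Finally I would verify the conclusion. The resulting $\la a_\alpha\mid\alpha<\mathfrak c\ra$ is strictly $\subset^*$-decreasing, hence a free sequence; it is cofinal with $\mathcal T$ by the invariants; and it is maximal, since for every $z=z_\xi$ one of (i)--(iv) fails --- (i) or (ii) in the easy cases, and (iv) in the critical case, witnessed by the consecutive pair $b_1,b_2$ with $b_1\setminus b_2\subseteq z_\xi$. The main obstacle is exactly the critical case: one must splice a nonempty slice lying inside a set $z$ that is $\mathcal F$-positive but not in $\mathcal F$ strictly between two consecutive members of the sequence while staying interleaved with $\mathcal T$, and the fact that makes this possible is that the maximal tower $\mathcal T$ admits no infinite pseudo-intersection, so removing a deep enough $T_{\delta_\xi}$ leaves both $a^*\cap z_\xi$ and $a^*\cap z_\xi^0$ infinite.
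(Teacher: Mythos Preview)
Your proof is correct and follows essentially the same approach as the paper: both build a strictly $\subset^*$-decreasing sequence interleaved with $\mathcal T$ by transfinite recursion, at each stage inserting between two tower elements a single ``modified'' term so that the resulting slice lies inside the current target set, using the maximality of $\mathcal T$ (no infinite pseudo-intersection) to find a deep enough tower element and $\mathfrak t=\mathfrak c$ to pass through limits. The only cosmetic differences are that the paper enumerates just $\pw{\omega}\setminus(\mathcal F\cup\mathcal F^*)$ and writes the modified term as $s\cup(t\setminus X_\beta)$ rather than your $T_{\delta_\xi}\cup(a^*\cap z_\xi)$, but the mechanism and the key ideas are identical.
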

\begin{proof}
	Let $\mathcal F$ be the filter generated by $\mathcal T$.
	If $\mathcal F$ is an ultrafilter, we are done.
	If this is not the case, fix an enumeration
	$\set{X_\alpha \mid \alpha \in \mathfrak c, \alpha \text{ even}}$
	of  $\pw{\omega} \setminus(\mathcal{F}\cup\mathcal{F}^*)$.
	We construct the tower $ \set{a_\alpha \mid \alpha \in \mathfrak c}$ cofinal in $\mathcal T$
	by induction.
	If $\beta < \mathfrak c$ is even and $a_\alpha$ is defined for each $\alpha < \beta$,
	find $t \in \mathcal T$ such that $t \subset^* a_\alpha$ and $a_\alpha \setminus t$
	is infinite for each $\alpha < \beta$,
	and let $a_\beta = t$ (choose $a_0 \in \mathcal T$ arbitrary).
	Then find $s \in \mathcal T$ such that
	$(t \setminus s) \cap X_\beta$ is infinite
	(use the assumptions on $\mathcal T$ and $X_\beta$)
	and let $a_{\beta + 1} = s \cup (t \setminus X_\beta)$.
	Notice that $a_\beta \setminus a_{\beta + 1}$ is an infinite subset of $X_\beta$.
	Now it is easy to check that the sequence we defined is a maximal free sequence.
\end{proof}

\begin{corollary}\label{cor:FSexists}
	Assume \textsf{CH}.
	For every selective independent system $\mathcal{A}$
	there exists a free sequence $B$ associated with $\mathcal{A}$.
\end{corollary}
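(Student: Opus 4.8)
The plan is to exhibit a tower which generates the filter $\mathscr F_{\mathcal A}$ and then invoke the preceding proposition. Since $\mathcal A$ is selective, $\mathscr F_{\mathcal A}$ is a rare P-filter, and $\mathfrak t = \mathfrak c = \omega_1$ by~\textsf{CH}.

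The key point is that the rare filter $\mathscr F_{\mathcal A}$ has no infinite pseudo-intersection. Suppose $X \in {[\omega]}^\omega$ satisfies $X \subset^* F$ for every $F \in \mathscr F_{\mathcal A}$. Fixing such an $F$ and comparing the increasing enumerations of $X$ and $F$: if $m_n$ denotes the position of the $n$-th element of $X$ inside $F$, then $(m_n)$ is strictly increasing, $m_n \ge n - N_F$ for a suitable constant $N_F$, and $\overline\epsilon_F(m_n) \le \overline\epsilon_X(n)$ for all large $n$. Given any increasing $g \in \omega^\omega$, rareness provides $F \in \mathscr F_{\mathcal A}$ with $g \le^* \overline\epsilon_F$, and then $g(n - N_F) \le \overline\epsilon_X(n)$ holds eventually. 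Applying this with $g(k) = 1 + \max_{j \le 2k}\overline\epsilon_X(j)$ yields, for all large $k$, the contradiction $\overline\epsilon_X(k + N_F) \ge g(k) \ge 1 + \overline\epsilon_X(k + N_F)$.

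Now I would use the folklore fact that a P-filter with a base of size $\le \mathfrak c = \omega_1$ can be thinned to a strictly $\subset^*$-decreasing sequence $\mathcal T = \la B_\alpha \mid \alpha < \omega_1 \ra$ of its members generating the same filter: at each stage pick $B_\alpha \in \mathscr F_{\mathcal A}$ almost contained in every earlier $B_\xi$ and in the $\alpha$-th member of a fixed base (P-property), shrinking it by intersecting with some member of $\mathscr F_{\mathcal A}$ not almost containing it — available by the previous paragraph — to keep the sequence strictly decreasing. Any set almost contained in all $B_\alpha$ would be a pseudo-intersection of $\sq{\mathcal T} = \mathscr F_{\mathcal A}$, so $\mathcal T$ is maximal with respect to end-extension, i.e.\ a tower. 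Since $\mathfrak t = \mathfrak c$, the preceding proposition applied to $\mathcal T$ yields a maximal decreasing free sequence $B = \la a_\alpha \mid \alpha \in \mathfrak c \ra$ cofinal with $\mathcal T$; by that construction every $a_\alpha$ lies in $\sq{\mathcal T} = \mathscr F_{\mathcal A}$ (the even-indexed terms belong to $\mathcal T$, each odd-indexed term contains a member of $\mathcal T$), so the filter generated by $B$ is contained in $\mathscr F_{\mathcal A}$, and cofinality with $\mathcal T$ gives equality. Hence $B$ is a maximal free sequence generating $\mathscr F_{\mathcal A}$, that is, a free sequence associated with $\mathcal A$. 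The one substantial step is the pseudo-intersection argument of the second paragraph; everything after it is bookkeeping and a direct appeal to the preceding proposition.
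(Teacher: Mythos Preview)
Your proof is correct and follows the route the paper intends: build a tower generating $\mathscr F_{\mathcal A}$ and then invoke the preceding proposition. The paper states the corollary without proof (for the specific selective system constructed in Proposition~\ref{prop:sel_sys} the tower is already explicit in that construction), so your argument---using rareness to rule out a pseudo-intersection and the P-property under \textsf{CH} to obtain a strictly $\subset^*$-decreasing generating sequence---supplies precisely the step needed for the corollary in the generality stated.
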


\begin{theorem}
	It is consistent that
	$\omega_1 = \mathfrak i = \mathfrak f < \mathfrak u = \mathfrak c = \omega_2$.
\end{theorem}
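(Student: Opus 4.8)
The plan is to realise this configuration in a countable support iteration of party forcings. Start with a ground model $V\models\textsf{GCH}$; by Proposition~\ref{prop:sel_sys} fix a selective independent system $\mathcal A\in V$ (of size $\aleph_1$), and by Corollary~\ref{cor:FSexists} fix a maximal free sequence $B\in V$ of size $\aleph_1$ associated with $\mathcal A$. I would then build a countable support iteration $\langle P_\alpha,\dot Q_\alpha\mid\alpha<\omega_2\rangle$ in which $\Vdash_{P_\alpha}\dot Q_\alpha=\mathbf Q_{\dot{\mathcal I}_\alpha}$, where $\dot{\mathcal I}_\alpha$ is a maximal ideal of $V[G_\alpha]$ chosen by a bookkeeping function. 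The bookkeeping is a standard reflection argument — using that $|P_\alpha|\le\aleph_1$ for $\alpha<\omega_2$, hence that \textsf{CH} holds in every $V[G_\alpha]$ and that $P_{\omega_2}$ is $\aleph_2$-c.c. — arranged so that for every $\aleph_1$-sized $\mathcal B\subseteq\pw{\omega}$ in $V[G_{\omega_2}]$ there is a stage $\beta$ with $\mathcal B\in V[G_\beta]$ at which $\dot{\mathcal I}_\beta$ is taken to be the dual of the filter generated by $\mathcal B$ whenever that filter happens to be an ultrafilter in $V[G_\beta]$.

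First comes the preservation chain. By induction on $\alpha\le\omega_2$ one checks that $P_\alpha$ is proper and has the Sacks property: the single step is Proposition~\ref{prop:proper} applied in $V[G_\alpha]$ (where $\dot{\mathcal I}_\alpha$ is a maximal ideal), and the limit step is the standard preservation theorem for countable support iterations of proper $\omega^\omega$-bounding forcings with the Sacks property. Consequently $P_{\omega_2}$ is proper, $\omega^\omega$-bounding, and — by the proposition that the Sacks property implies Cohen-preservation, together with Lemma~\ref{lem:om1_dense} — Cohen-preserving. Under \textsf{GCH} one also gets $|P_{\omega_2}|=\aleph_2$ and the $\aleph_2$-c.c., and since each $\mathbf Q_{\dot{\mathcal I}_\alpha}$ adds the new real $r_\alpha$ of Lemma~\ref{lem:kill}, we obtain $\mathfrak c=\aleph_2$ in $V[G_{\omega_2}]$.

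Next the two cardinal computations. For $\mathfrak u=\aleph_2$: any ultrafilter of $V[G_{\omega_2}]$ has a base $\mathcal B$ of size $\aleph_1$ that, by the $\aleph_2$-c.c. and $\cf\omega_2>\aleph_1$, lies in some $V[G_{\alpha^*}]$; at the bookkept stage $\beta\ge\alpha^*$ either the filter generated by $\mathcal B$ is already not an ultrafilter in $V[G_\beta]$ (a witness to that is absolute), or $\dot{\mathcal I}_\beta$ is its dual and Lemma~\ref{lem:kill} produces $r_\beta$ which is positive for that filter but not a member of it — and $r_\beta$ remains such a witness in $V[G_{\omega_2}]$, since the relevant almost-inclusions only involve sets of $V[G_\beta]$. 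Hence no ultrafilter has a base of size $\le\aleph_1$, so $\mathfrak u=\mathfrak c=\aleph_2$. For $\mathfrak i=\mathfrak f=\aleph_1$: by Lemma~\ref{lem:F_pres} the filter $\mathscr F_{\mathcal A}$ of $V[G_{\omega_2}]$ is generated by $\mathscr F_{\mathcal A}^V$, hence is a rare P-filter (rareness is preserved by $\omega^\omega$-bounding forcing and the P-filter property by proper forcing); granting that $\mathcal A$ stays dense (see below), $\mathcal A$ is a maximal independent system of size $\aleph_1$, so $\mathfrak i\le\aleph_1$, and by Theorem~\ref{thm:FSpreserv} applied with $W=V[G_{\omega_2}]$ the sequence $B$ remains a maximal free sequence, so $\mathfrak f\le\aleph_1$. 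Since $\mathfrak r\le\mathfrak f$ (the proposition $\mathfrak r\le\mathfrak f\le\mathfrak u^*$) and $\mathfrak r,\mathfrak i\ge\aleph_1$ in \textsf{ZFC}, we conclude $\omega_1=\mathfrak i=\mathfrak f<\mathfrak u=\mathfrak c=\omega_2$.

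I expect the principal obstacle to be exactly the preservation of density of $\mathcal A$ along the iteration. At successor stages this is Theorem~\ref{thm:onestep_preserv} applied in $V[G_\alpha]$ (using that $\mathcal A$ is selective there by the induction hypothesis and that $\dot{\mathcal I}_\alpha$ is maximal). The delicate point is the limit stage: by Remark~\ref{rem:preserve-dense} it is enough to keep $\pw{\omega}=\langle\mathscr F_{\mathcal A}^V\rangle\cup\langle\mathscr C_{\mathcal A}\rangle$, and for this one needs a genuine iteration-preservation theorem asserting that the property ``$\mathbf Q_{\dot{\mathcal I}}$ forces $\mathcal A$ to stay dense'' is itself preserved under countable support iterations. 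This is the technical heart of Shelah's construction: one isolates an iterable strengthening of the conclusion of Theorem~\ref{thm:onestep_preserv} and pushes it through limits by a fusion argument built on top of the Sacks-property fusion underlying Proposition~\ref{prop:proper}. Everything else reduces to routine bookkeeping and cardinal arithmetic.
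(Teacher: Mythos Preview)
Your outline is correct and follows the paper's argument almost exactly: the countable support iteration of $\mathbf Q_{\mathcal I}$'s with bookkeeping over all maximal ideals, Lemma~\ref{lem:kill} plus reflection for $\mathfrak u=\omega_2$, Theorem~\ref{thm:onestep_preserv} at successor stages, and Theorem~\ref{thm:FSpreserv} for $\mathfrak f=\omega_1$.

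The one point where your description diverges from the paper is the limit-stage preservation of density. You anticipate an iterable strengthening of Theorem~\ref{thm:onestep_preserv} pushed through by a fusion argument built on the Sacks-type fusion of Proposition~\ref{prop:proper}. That is not what the paper does. The limit step is handled by a \emph{general} preservation theorem (Theorem~\ref{thm:limit_preserv} in the appendix) that makes no reference to the particular structure of $\mathbf Q_{\mathcal I}$: if $\mathcal F$ is a P-filter, $\mathcal K=\pw{\omega}\setminus\mathcal F$, and every initial segment $P_\alpha$ of a countable support iteration of proper forcings forces ``$\la\mathcal F\ra$ is rare and $\la\mathcal F\ra\cup\la\mathcal K\ra=\pw{\omega}$'', then so does $P_\delta$. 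One applies it with $\mathcal F=\mathscr F_{\mathcal A}^V$, so that by Lemma~\ref{lem:dense-equivalence} the co-filter $\la\mathcal K\ra$ coincides with $\la\mathscr C_{\mathcal A}\ra$, and then Remark~\ref{rem:preserve-dense} finishes. The proof of Theorem~\ref{thm:limit_preserv} is in the Shelah--Goldstern preservation style, driven by the game $\mathrm G(\mathcal F)$ characterising rare P-filters (Fact~\ref{fact:game}) and the diagonal property of non-meager P-filters (Fact~\ref{fact:non-meager}); no Sacks-type fusion enters at this point. This buys generality --- the limit argument would work for any iterands that individually preserve the filter/co-filter dichotomy and the rareness of $\mathscr F_{\mathcal A}$ --- whereas your proposed route would tie the limit step unnecessarily to the combinatorics of party forcing.
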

\begin{proof}
	Start in a model of \textsf{CH} and run a countable support
	iteration of length $\omega_2$ of posets of form $\mathbf Q_{\mathcal I}$
	with the parameter $\mathcal I$ ranging over all maximal ideals on $\omega$ in
	all intermediate models.
	Lemma~\ref{lem:kill} together with the usual reflection argument
	implies that the final generic extension does not contain any
	ultrafilter base of size $\omega_1$, i.e.\ $\mathfrak u = \mathfrak c = \omega_2$.

	Use Proposition~\ref{prop:sel_sys} to find a selective
	independent system in the ground-model. Theorem~\ref{thm:onestep_preserv}
	states that the independent system remains selective in all successor
	stages of the iteration and Theorem~\ref{thm:limit_preserv} together with
	Remark~\ref{rem:preserve-dense}
	ensure that it remains selective
	also in limit stages of the iteration. Thus the ground-model
	independent system remains selective and in particular maximal in the final extension,
	$\mathfrak i = \omega_1$.
	Finally use Corollary~\ref{cor:FSexists} in the ground-model to
	find a free sequence associated with a selective independent system.
	Theorem~\ref{thm:FSpreserv} states that this free sequence is still
	maximal in the final generic extension, thus $\mathfrak f = \omega_1$.
\end{proof}

It is worth noting that in the resulting model all the usually considered 
cardinal characteristics of the continuum, except $\mathfrak u$, 
are equal to $\omega_1$. 
For $\mathfrak a$ this was proved by Guzm\'{a}n~\cite{osvaldo}.

\appendix

\section*{Appendix: Preservation theorem for the iteration}\label{sec:preservation}

% \begin{fact}\label{fact:non-meager}
% 	Filter $\mathcal F$ on $\omega$ is a non-meager P-filter iff
% 	for each sequence $\la F_i \in \mathcal F \mid i \in \omega \ra$ exists
% 	$F \in \mathcal F$ such that for $F \setminus i \subset F_i$
% 	for infinitely many $i \in \omega$.
% \end{fact}

The forcing iteration argument in Section~\ref{sec:mis}
uses a typical preservation theorem for countable support forcing iteration,
in this instance the preservation of a filter--co-filter pair.  %chktex 8
This theorem follows the usual pattern described
in~\cite{shelah-proper,goldstern-tools}.
However, as specific instances of preservation theorems are sometimes
difficult to derive from the general statements given in these sources,
we decided to provide the proof of the relevant preservation theorem in this appendix,
making the paper more self-contained.

Let $\mathcal F$ be a filter on $\omega$. We will use the following game $\mathrm G({\mathcal F})$.
Players~I and~II alternate for $\omega$ many rounds.
In the $n$-th round player~I plays a set $F_n \in \mathcal F$,
and player~II responds with $a_n \in F_n$. Player~II wins if
$\set{ a_n \mid n \in \omega } \in \mathcal F$.
The following is well known.

\begin{fact}\label{fact:game}
	Player~I does not have a winning strategy in the game $\mathrm G({\mathcal F})$ iff
	$\mathcal F$ is a rare P-filter.
\end{fact}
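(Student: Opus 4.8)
The plan is to prove the two implications separately, using Fact~\ref{lem:diagonal}, which identifies rare P-filters as exactly the filters with the diagonal property, together with the bare definitions of ``rare'' and ``P-filter''. For the implication ``$\mathcal F$ a rare P-filter $\Rightarrow$ player~I has no winning strategy'' I would fix an arbitrary strategy $\sigma$ for player~I and exhibit one run of the game, consistent with $\sigma$, that player~II wins. Let $T \subseteq \omega^{<\omega}$ be the tree of finite partial plays $\la a_0, \dots, a_{n-1}\ra$ of player~II that are legal against $\sigma$, and for $s \in T$ write $F_s = \sigma(s) \in \mathcal F$ for player~I's reply after $s$. The decisive bookkeeping step is to keep only finitely many of these replies relevant at a time: for $n \in \omega$ put $T_n = \set{ s \in T \mid \card{s} \le n+1 \text{ and every entry of } s \text{ is } \le n }$, which is finite, and let $\tilde F_n = \bigcap_{s \in T_n} F_s \in \mathcal F$, so that $\tilde F_0 \supseteq \tilde F_1 \supseteq \cdots$ and, since $\la\ra \in T_n$, each $\tilde F_n \subseteq \sigma(\la\ra)$. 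Applying the diagonal property to $\la \tilde F_n \mid n \in \omega \ra$ and intersecting the result with $\sigma(\la\ra)$, I obtain $H \in \mathcal F$ with $H \subseteq \sigma(\la\ra)$ and $H \setminus (n+1) \subseteq \tilde F_n$ for every $n \in H$. Enumerating $H = \set{h_0 < h_1 < \cdots}$, I claim $\la h_0, h_1, \dots\ra$ is a legal play for player~II: arguing by induction on $i$, the position $\la h_0, \dots, h_{i-1}\ra$ has length $i \le h_{i-1}+1$ and all entries $\le h_{i-1}$, hence lies in $T_{h_{i-1}}$, so $\sigma(\la h_0, \dots, h_{i-1}\ra) \supseteq \tilde F_{h_{i-1}} \supseteq H \setminus (h_{i-1}+1) \ni h_i$, the case $i = 0$ using only $h_0 \in H \subseteq \sigma(\la\ra)$. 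The set of player~II's moves along this play equals $H \in \mathcal F$, so player~II wins and $\sigma$ is not winning.

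For the converse I argue by contraposition: if $\mathcal F$ is not a rare P-filter it either fails to be a P-filter or fails to be rare, and in each case player~I has a transparent winning strategy. If $\mathcal F$ is not a P-filter, fix $\set{X_n \mid n \in \omega} \subseteq \mathcal F$ with no pseudo-intersection in $\mathcal F$, put $F_n = X_0 \cap \dots \cap X_n \in \mathcal F$, and let player~I play $F_n$ in round $n$ regardless of player~II; since the $F_n$ decrease and the answers satisfy $a_m \in F_m$, the tail $\set{a_m \mid m \ge n}$ is contained in $F_n \subseteq X_n$ for every $n$, so $\set{a_n \mid n \in \omega}$ is a pseudo-intersection of $\set{X_n \mid n \in \omega}$, hence not a member of $\mathcal F$, and player~II loses. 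If $\mathcal F$ is not rare, then $\set{\overline\epsilon_F \mid F \in \mathcal F}$ is not dominating, so fix $g \in \omega^\omega$ with $g(i) \ge 1$ for all $i$ and $g \not\le^* \overline\epsilon_F$ for every $F \in \mathcal F$; fix also $F^* \in \mathcal F$ and let player~I play $F^*$ in round $0$ and $F^* \setminus (a_n + g(n) + 1)$ — which is $F^*$ with finitely many elements removed, hence again in $\mathcal F$ — in round $n+1$. Then player~II's answers obey $a_{n+1} > a_n + g(n)$, so $A = \set{a_n \mid n \in \omega}$ is increasingly enumerated by $a_0 < a_1 < \cdots$ and $\overline\epsilon_A(n) > g(n)$ for all $n$; were $A \in \mathcal F$ this would contradict the choice of $g$, so $A \notin \mathcal F$ and again player~II loses.

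The first implication is the part I expect to be the genuine obstacle: a strategy for player~I reveals its sets only along an actual run, so there is no single sequence against which to diagonalize outright. The device that overcomes this is the bound on \emph{both} the length and the entries of a position, which makes player~I's relevant replies finite at each stage and produces the decreasing sequence $\la \tilde F_n \ra$ to feed into the diagonal property; then playing straight up the increasing enumeration of $H$, skipping no element, is precisely what forces player~II's move set to equal $H$ and hence to lie in $\mathcal F$. The remaining matters — the finiteness of $T_n$, the check that $\la h_0, \dots, h_{i-1}\ra$ really lands in $T_{h_{i-1}}$, and the two strategies for player~I in the converse — are routine verifications.
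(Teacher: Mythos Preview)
The paper does not prove this statement: it is recorded as a well-known fact and left without proof, so there is no paper argument to compare against. Your proof is correct in both directions. The forward implication is handled cleanly by the finite ``window'' sets $T_n$ bounding simultaneously the length and the entries of a partial play, which is exactly the right trick to reduce player~I's possible replies to a single decreasing sequence amenable to the diagonal property; the inductive check that $\la h_0,\dots,h_{i-1}\ra \in T_{h_{i-1}}$ and that $h_i$ is then legal is accurate. For the converse, both strategies for player~I work as stated; one tiny remark is that in the non-P-filter case the set $\set{a_n \mid n\in\omega}$ might a priori be finite, but then it is certainly not in $\mathcal F$, so player~II still loses.
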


\begin{theorem}\label{thm:limit_preserv}
	Let $\mathcal F$ be a P-filter on $\omega$,
	denote $\mathcal K = \pw{\omega} \setminus \mathcal F$.
	For $\delta$ limit let $P_\delta = \la P_\alpha, Q_\alpha \mid \alpha < \delta \ra$
	be a countable support iteration of proper
	forcing notions such that for each $\alpha < \delta$
	\[P_\alpha \Vdash \mathcal F \text { is a rare filter and }
		\la \mathcal F \ra \cup \la \mathcal K \ra = \pw{\omega}.\]
	Then also $P_\delta \Vdash \la \mathcal F \ra \cup \la \mathcal K \ra = \pw{\omega}$.
\end{theorem}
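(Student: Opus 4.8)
The plan is to argue by induction on the limit ordinal $\delta$, following the standard pattern for countable support iterations of proper forcings (cf.~\cite{shelah-proper,goldstern-tools}). The case $\cf\delta>\omega$ is immediate: by properness of the iteration every $P_\delta$-name for a subset of $\omega$ is, below a suitable condition, already a $P_\alpha$-name for some $\alpha<\delta$, so the dichotomy follows from the hypothesis at stage $\alpha$, its witnesses being sets in $V$. The real work is the case $\cf\delta=\omega$; fix an increasing sequence $\delta_n\nearrow\delta$ with $\delta_0=0$. Here it suffices, by a density argument, to prove: if $p\in P_\delta$ is a condition such that no $q\le p$ forces $X\in\la\mathcal K\ra$, then there are $q\le p$ and $F\in\mathcal F$ with $q\Vdash F\subseteq^* X$ (if some $q\le p$ does force $X\in\la\mathcal K\ra$, that $q$ already witnesses the dichotomy). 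The assumption on $p$ unwinds as follows: for every $q\le p$ the ground-model set $X_q=\set{n\in\omega\mid q\not\Vdash n\notin X}$ lies in $\mathcal F$, since otherwise $X_q\in\mathcal K$ and, as $q\Vdash X\subseteq X_q$, already $q\Vdash X\in\la\mathcal K\ra$.

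Then I would build $q$ by an $\omega$-step fusion interleaved with a run of the game $\mathrm G(\mathcal F)$. Along a tower of countable elementary submodels $M_0\in M_1\in\cdots$ containing $\mathcal F$, $X$, $p$ and $P_\delta$, I would produce a decreasing sequence $p=q_0\ge q_1\ge\cdots$ which controls the supports and stabilises the restrictions $q_n\restriction\delta_n$ enough — using properness of $P_{\delta_n}$ and of the whole iteration, together with the stepwise preservation hypothesis at the stages $\delta_n$, in the manner of~\cite{goldstern-tools} — that the sequence has a lower bound $q\in P_\delta$ with $q\le p$. Player~I's moves are dictated by this construction: at stage $n$ the condition $q_n$ has been split into finitely many ``cases'' $q_n^\sigma\le q_n$ (the decisions made so far on the coordinates below $\delta_n$ and on the value of $X$ on an initial segment), each still $\le p$, so $X_{q_n^\sigma}\in\mathcal F$ and player~I plays $F_n=\bigcap_\sigma X_{q_n^\sigma}\in\mathcal F$. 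When player~II answers with some $a_n\in F_n$, then $a_n\in X_{q_n^\sigma}$ for every $\sigma$, so each $q_n^\sigma$ can be extended to force $a_n\in X$; reassembling these extensions of the finitely many cases and carrying out the usual fusion bookkeeping (enlarging the set of active coordinates, meeting the $n$-th dense subset of $M_n$, and so on) produces $q_{n+1}\le q_n$ with $q_{n+1}\Vdash a_n\in X$.

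Since $\mathcal F$ is a rare P-filter, Fact~\ref{fact:game} says that player~I has no winning strategy in $\mathrm G(\mathcal F)$, so the strategy just described admits a run in which player~II wins, i.e.\ $F=\set{a_n\mid n\in\omega}\in\mathcal F$. For that run the lower bound $q$ of $\sq{q_n\mid n\in\omega}$ satisfies $q\le q_{n+1}\Vdash a_n\in X$ for every $n$, hence $q\Vdash F\subseteq X$ and a fortiori $q\Vdash F\subseteq^* X$, so $q\Vdash X\in\la\mathcal F\ra$, as required; here the game plays the role of the explicit diagonalisation (the diagonal property, Fact~\ref{lem:diagonal}) in the one-step argument. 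The main obstacle is precisely this fusion: setting up the countable-support construction so that the lower bound $q$ is a genuine condition of $P_\delta$ while the construction is coherently interleaved with the game, and verifying that player~I always has a legal move in $\mathcal F$ — the latter being the direct analogue of the way Corollary~\ref{cor:nice_name} and the diagonal property of $\mathcal F$ are used in the proof of Theorem~\ref{thm:onestep_preserv}, now with the interpolation and support control of a countable support iteration replacing the tree surgery of the party forcing.
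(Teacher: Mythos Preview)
Your overall plan—reduce to $\cf\delta=\omega$, dispose of the case where some $q\le p$ forces $X\in\la\mathcal K\ra$ by observing that otherwise $X_q\in\mathcal F$ for every $q\le p$, and then build a fusion condition forcing $X\in\la\mathcal F\ra$—is the right shape, and you are right that the game $\mathrm G(\mathcal F)$ is the engine. The gap is in the fusion itself: the move ``split $q_n$ into finitely many cases $q_n^\sigma$, extend each to force $a_n\in X$, and reassemble'' is a tree-forcing idiom (exactly the role of the subtrees $T_f$ in Theorem~\ref{thm:onestep_preserv}) that has no analogue for a countable support iteration of \emph{arbitrary} proper forcings. There is no finite antichain below $q_n\restriction\delta_n$ along which one can amalgamate independent extensions, and more concretely, forcing $a_n\in X$ will in general require strengthening $q_n$ on coordinates below $\delta_n$; so you cannot simultaneously freeze $q_n\restriction\delta_n$ (needed for the lower bound to exist) and push each $a_n$ into $X$. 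A symptom of the problem is that your argument uses the rareness of $\la\mathcal F\ra$ only in $V$, never in the intermediate extensions, whereas the hypothesis demands it at every stage.

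The paper does not play $\mathrm G(\mathcal F)$ once in $V$. It fixes a single $N\prec H(\theta)$ and, using that $\mathcal F$ is a P-filter, a set $A^*\in\mathcal F$ almost contained in every member of $\mathcal F\cap N$; the target is then to force $A^*\subset X$. The construction carries \emph{two} interleaved sequences: conditions $p_i\in N\cap P_\omega$ with $p_{i+1}\restriction i=p_i\restriction i$, and $(N,P_i)$-master conditions $q_i$ with $q_{i+1}\restriction i=q_i$, together with, at each stage, a $P_i$-name for an entire decreasing sequence $\la r^i_n\ra\in N[G_i]$ of tail conditions. The game is invoked (packaged as a sub-lemma) \emph{inside each} $N[G_{i+1}]$, using that $\la\mathcal F\ra$ is a rare P-filter there, to manufacture these sequences; then the non-meager diagonal property of $\la\mathcal F\ra$ relative to $A^*$ (Fact~\ref{fact:non-meager}) picks out an index $m$ with $A^*\setminus m\subset F_m$, and $p_{i+1}$ is defined from $r^i_m$. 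The point is that the strengthening needed to force new elements of $A^*$ into $X$ is absorbed into the tail name $p_{i+1}/G_{i+1}$, not into the frozen part $q_i$; this is exactly what your ``reassembly'' was meant to achieve but cannot, and it is where the stepwise rareness hypothesis does its work.
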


By $\la \mathcal F \ra$ and $\la \mathcal K \ra$ we denote the upwards,
respectively downwards closure of
$\mathcal F$ and $\mathcal K$ in the appropriate models.
The assumption for $\alpha = 0$ states that $\mathcal F$ is a rare P-filter
in the ground model $V$.
Standard arguments shows that $\la \mathcal F \ra$ is a P-filter
in any generic extension via a proper forcing,
and $\la \mathcal F \ra$ is rare in any generic extension via a bounding forcing.

\begin{proof}
	If the cofinality of $\delta$ is uncountable, no new reals are added
	at stage $\delta$ of the iteration,
	and the conclusion of the theorem holds true.
	Therefore we will assume that the cofinality of $\delta$ is countable,
	and by passing to a cofinal sequence of $\delta$,
	it is sufficient to prove the theorem in case $\delta = \omega$.
	In the following $G_\alpha$ denotes exclusively generic filters on $P_\alpha$.
	We use  $P$ to denote posets $P_\delta / G_\alpha$ in the intermediate generic
	extensions $V[G_\alpha]$. % (here $G_\alpha$ is a generic filter on $P_\alpha$).
	Let $X$ be a $P$-name for a subset of $\omega$.
	For $r \in P$ let $X_r = \set{ n \in \omega \mid r \not\Vdash n \notin X }$.

	\begin{lemma}\label{lem:use-rare}
		Let $\mathcal H$ be a rare P-filter and $p \in P$ a condition.
		If $X_r \in \mathcal H$ for each $r < p$,
		then there exists $H \in \mathcal H$ and a sequence
		$\la r_i \in P \mid i \in \omega \ra$, $r_0 = p$, $r_{i+1} < r_i$
		such that $r_i \Vdash H \cap i \subset X$ for each $i\in \omega$.
	\end{lemma}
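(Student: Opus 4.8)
The plan is to run the game $\mathrm{G}(\mathcal H)$ against a suitably chosen strategy for Player~I and appeal to Fact~\ref{fact:game}: since $\mathcal H$ is a rare P-filter, Player~I has no winning strategy, so whichever strategy we hand to Player~I there will be a run of the game in which Player~II wins. The set produced by Player~II in such a run will be the required $H$, and the auxiliary conditions that Player~I commits to during the play will assemble into the sequence $\la r_i \mid i \in \omega \ra$.

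The strategy I would give Player~I is the following. Player~I keeps a current condition $q \le p$, starting with $q = p$. In round~$n$, having seen Player~II's previous answer $a_{n-1}$ (with $a_{-1} := -1$), Player~I plays $F_n := X_q \setminus (a_{n-1}+1)$; this is legal since $q \le p$ forces $X_q \in \mathcal H$ by hypothesis and removing a finite set keeps us inside $\mathcal H$. When Player~II answers with some $a_n \in F_n$ we have $a_n \in X_q$, i.e.\ $q \not\Vdash a_n \notin X$, so Player~I may choose $q_n \le q$ with $q_n \Vdash a_n \in X$ and promote $q_n$ to be the new current condition. Subtracting $a_{n-1}+1$ forces the answers to form a strictly increasing sequence $a_0 < a_1 < a_2 < \cdots$, so that $a_n \ge n$ for every $n$.

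By Fact~\ref{fact:game} this strategy is not winning, so I would fix a run of it in which Player~II wins; then $H := \set{ a_n \mid n \in \omega } \in \mathcal H$, and the run will have produced conditions $p \ge q_0 \ge q_1 \ge \cdots$ with $q_n \Vdash a_n \in X$ for each $n$. Setting $r_0 := p$ and $r_i := q_{i-1}$ for $i \ge 1$ gives a decreasing sequence with $r_0 = p$. To check $r_i \Vdash H \cap i \subset X$ (vacuous when $i = 0$): one has $H \cap i = \set{ a_n \mid a_n < i }$, and if $a_n < i$ then $n \le a_n < i$, hence $q_{i-1} \le q_n$ and so $r_i = q_{i-1} \Vdash a_n \in X$; thus $r_i \Vdash H \cap i \subset X$, as desired.

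The argument is short, and its one delicate point is the bookkeeping that guarantees $a_n \ge n$: this is exactly what turns ``$a_n \in H \cap i$'' into ``$i-1 \ge n$'', and hence into ``$r_i$ extends $q_n$''. The rest is a routine translation between the game $\mathrm{G}(\mathcal H)$ and the forcing relation; in particular, properness of $P$ is not used in this lemma.
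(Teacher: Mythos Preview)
Your proof is correct and follows essentially the same approach as the paper: both hand Player~I a strategy in $\mathrm{G}(\mathcal H)$ that plays $X_q$ for the current condition $q$ and strengthens $q$ to force the reply into $X$, then invoke Fact~\ref{fact:game} to obtain $H$ and read off the $r_i$. Your explicit truncation $F_n := X_q \setminus (a_{n-1}+1)$ to force $a_n \ge n$ is a minor bookkeeping refinement that makes the definition of $r_i$ slightly cleaner than the paper's indexing $r_i = p_{a_n}$ for $a_n < i \le a_{n+1}$, but the content is the same.
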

	\begin{proof}
		Put $p_0 = p$ and play the game $\mathrm G({\mathcal H})$
		as follows. In the $n$-th round player~I plays the set $X_{p_n} \in \mathcal H$,
		player~II responds with some $a_n \in X_{p_n}$. Player~I then chooses
		$p_{n+1} \in P$, $p_{n+1} < r_n$ such that $p_{n+1} \Vdash a_n \in X$
		and proceeds to the next round.
		Since $\mathcal H$ is a rare P-filter, this strategy is not winning for
		player~I. Thus there is a sequence of moves of player~II and conditions
		$\la p_n \mid n \in \omega \ra$ such that player~II wins the game,
		i.e.\ $H = \set{ a_n \mid n \in \omega } \in \mathcal H$.
		A sequence of conditions $\la r_i \mid i \in \omega \ra$ such that
		$r_i = p_{a_n}$ if $a_n < i \leq a_{n+1}$
		is as required in the lemma.
	\end{proof}

	Let $p$ be a condition in $P_\omega$. The goal is to find a stronger condition which forces
	either $X \in \la \mathcal F \ra$ or $X \in \la \mathcal K \ra$.
	In case there exists an intermediate extension $V[G_\alpha]$, $p \in G_\alpha$
	and $r \in P/G_\alpha$, $r < p/G_\alpha$
	such that $X_r \notin \la \mathcal F \ra$ (in $V[G_\alpha]$),
	then $r \Vdash X \in \la \mathcal K \ra$ due to the assumption of the theorem,
	and there exists a condition in $P_\omega$ stronger than $p$ forcing the same statement.
	Therefore we will assume in the rest of the proof that this is not the case.

	For a sufficiently large $\theta$ fix a countable elementary submodel $N \prec H(\theta)$
	such that $X, p, \mathcal F, P_\omega \in N$.
	Use Lemma~\ref{lem:use-rare} in $N$ for $\mathcal H = \mathcal F$ and $P = P_\omega$
	to get $H \in \mathcal F \cap N$ and
	a sequence $\la r^0_n \in P_\omega \mid n \in \omega \ra \in N$.
	Since $\mathcal F$ is a P-filter, there exists $A^* \in \mathcal F$ such that $A^* \subset H$, and
	$A^* \subset^* F$ for each $F \in \mathcal F \cap N$.

	\begin{lemma}\label{lem:use-master}
		Let $q$ be a $(P_i,N)$-master condition, and let $\la F_n \mid n \in \omega\ra \in N[G_i]$
		be a sequence of elements of $\mathcal F$.
		Then \[q \Vdash \text{ There are infinitely many } n \in \omega
			\text{ such that } A^* \setminus n \subset F_n.\]
	\end{lemma}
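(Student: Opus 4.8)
The plan is to use the master condition $q$ to push the non-meager P-filter diagonalisation of Fact~\ref{fact:non-meager} inside the elementary submodel $N[G_i]$, and then to pull the resulting witness back down to a ground model member of $\mathcal F \cap N$ --- which is exactly the class of sets $A^*$ was chosen to be almost contained in. First I would fix a generic filter $G_i$ with $q \in G_i$ and work in $V[G_i]$. Since $q$ is a $(P_i,N)$-master condition, $N[G_i] \prec H(\theta)^{V[G_i]}$ and, as usual for a master condition, $N[G_i] \cap V = N$. Instantiating the hypothesis of the theorem at the stage $i < \delta = \omega$, together with the standard fact that a proper forcing preserves the P-filter property of $\la \mathcal F \ra$, we get that $\la \mathcal F \ra$ is a rare P-filter in $V[G_i]$, hence in particular a non-meager P-filter (a rare filter is non-meager). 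Since $\mathcal F \in N$, the model $N[G_i]$ computes $\la \mathcal F \ra$ correctly, so by elementarity $N[G_i]$ too believes that $\la \mathcal F \ra$ is a non-meager P-filter.

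Next, since $\la F_n \mid n \in \omega \ra \in N[G_i]$ is a sequence of elements of $\la \mathcal F \ra$, I would apply Fact~\ref{fact:non-meager} inside $N[G_i]$ to obtain some $F \in \la \mathcal F \ra \cap N[G_i]$ with $F \setminus n \subseteq F_n$ for infinitely many $n \in \omega$; this is an arithmetical property of the parameters, so it really holds in $V[G_i]$. Because $F \in \la \mathcal F \ra$ there is $F' \in \mathcal F$ with $F' \subseteq^* F$, and by elementarity $F'$ may be chosen in $N[G_i]$; as $F' \in \mathcal F \subseteq V$ and $N[G_i] \cap V = N$, we conclude $F' \in \mathcal F \cap N$. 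By the choice of $A^*$ this yields $A^* \subseteq^* F' \subseteq^* F$. Fixing $k_0$ with $A^* \setminus k_0 \subseteq F$, for every $n \ge k_0$ one has $A^* \setminus n \subseteq A^* \setminus k_0 \subseteq F$, and since every element of $A^* \setminus n$ is $\ge n$ this gives $A^* \setminus n \subseteq F \setminus n$; hence for the infinitely many $n \ge k_0$ with $F \setminus n \subseteq F_n$ we obtain $A^* \setminus n \subseteq F_n$. As the generic filter $G_i \ni q$ was arbitrary, this is precisely the assertion that $q$ forces.

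The hard part is the reflection in the first two steps. The filter $\la \mathcal F \ra^{V[G_i]}$ need not be generated by $\mathcal F \cap N$ --- new reals may enter it --- so the point requiring care is that the diagonalising set $F$ found inside $N[G_i]$, which may itself be a new real, nonetheless lies above some ground model member of $\mathcal F \cap N$, to which $A^*$ was tailored; this is exactly what $N[G_i] \cap V = N$ supplies, and is the reason the hypothesis ``$\la F_n \ra \in N[G_i]$'' rather than merely ``$\la F_n \ra \in V[G_i]$'' is needed. The remaining threshold bookkeeping with $k_0$ is routine.
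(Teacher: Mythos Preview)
Your argument is correct and follows essentially the same route as the paper: use elementarity of $N[G_i]$ in $H(\theta)[G_i]$ together with the fact that $\la\mathcal F\ra$ is a non-meager P-filter there to find a diagonalising $F$ inside $N[G_i]$, pull it down to $\mathcal F\cap N$ via $N[G_i]\cap V=N$, and then invoke $A^*\subset^* F$. The paper merely compresses your two steps (finding $F\in\la\mathcal F\ra$ and then $F'\in\mathcal F$ below it) into one, writing directly ``there is $F\in\mathcal F\cap N[G_i]$''; your more explicit version with the threshold $k_0$ is fine.
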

	\begin{proof}
		Since $N[G_i] \prec H(\theta)[G_i]$ and $\mathcal F$ generates
		a non-meager filter in $H(\theta)[G_i]$,
		there is $F \in \mathcal F \cap N[G_i]$ such that $F \setminus n \subset F^n$
		for infinitely many~$n$
		(Fact~\ref{fact:non-meager}). Now $q \Vdash F \in N$ and we can use that $A^* \subset^* F$.
	\end{proof}

	We will inductively construct a condition $q < p$ such that $q \Vdash A^* \subset X$.
	Specifically, we construct two sequences of conditions $p_i, q_i$ for $i \in \omega$
	with the following properties;

	\begin{enumerate}
		\item\label{item:standard}
		      \begin{itemize}
			      \item $p_i \in P_\omega$,
			      \item $p_{i+1} < p_i$,
			      \item $p_{i+1} \restriction i = p_{i} \restriction i$,
			      \item $q_i \in P_i$,
			      \item $q_{i+1} \restriction i = q_{i}$,
			      \item $q_i < p_{i} \restriction i$,
			      \item $q_i$ is a $(N, P_i)$-master condition;
		      \end{itemize}
		\item\label{item:goal} $q_i \Vdash (p_i/G_i \Vdash A^* \cap i \subset X)$,
		\item\label{item:inductive} $q_i \Vdash \big(\text{There is a sequence }
			      \la r^i_n \in P_\omega/G_i \mid n \in \omega \ra \in N[G_i],\\
			      %r_{n+1}^i < r_{n}^i,
			      r^i_n < p_i/G_i \text{ such that } r^i_n \Vdash A^* \cap n \subset X\big)$.
	\end{enumerate}

	The construction starts with putting $p_0 = p$ and
	let $q_0$ be a trivial condition (in the trivial forcing $P_0$).
	Existence of the sequence $\la r^0_n \in P_\omega \mid n \in \omega \ra$
	follows from the choice of $A^*$.

	Suppose that $p_i, q_i$ are defined, work in $N[G_i]$ assuming $q_i \in G_i$.
	For each $n \in \omega$ consider a model $N[G_{i+1}]$
	such that $r^i_n \restriction (i+1) \in G_{i+1}/G_i$.
	Use Lemma~\ref{lem:use-rare} in $N[G_{i+1}]$ for $\la \mathcal F \ra$ and $r^i_n/G_{i+1}$
	to get $H_n \in \la \mathcal F \ra \cap N[G_{i+1}]$ and a sequence
	$\la s^n_k \in P_\omega/G_{i+1} \mid k \in \omega \ra \in N[G_{i+1}]$ as in the lemma.
	We can assume that $H_n \in \mathcal F \cap N[G_{i+1}]$, and by strengthening
	$r^i_n \restriction \set{ i }$ to $t^i_n \restriction \set{ i } \in N[G_i]$ we can decide
	$H_n$ to be some $F_n \in \mathcal F \cap N[G_i]$.
	Since $q_i$ is $(N, P_i)$-master, Lemma~\ref{lem:use-master} implies that there is $m > i$
	such that $A^* \setminus m \subset F_m$.

	Define $p_{i+1} = p_i \restriction i \conc t^i_m$, and let
	$q_{i+1} < p_{i+1}\restriction i+1 $ be any $(N, P_{i+1})$-master condition
	such that $q_{i+1} \restriction i = q_i$.
	Property~(\ref{item:standard}) is obviously satisfied.
	Property (\ref{item:goal}) follows from $m > i$, the inductive hypothesis for $r_m^i$,
	and from ${{q_{i+1}} \conc (p_{i+1}/G_{i+1})} < q_i \conc r_m^i$.
	To justify~(\ref{item:inductive}) notice that $q_{i+1}$ forces that the sequence
	$\la s^m_k \mid k \in \omega \ra$ satisfies the condition required for
	$\la r^i_n \mid n \in \omega \ra$;
	for $y \in A^* \cap m$ this follows from the inductive hypothesis on $r^i_m$,
	and for $y \in A^*, x \geq m$ from the choice of $\la s^m_k \mid k \in \omega \ra$
	and $A^* \setminus m \subset F_m$.

	Once the inductive construction is done, the condition
	$q = \bigcup \set{ q_i \mid i \in \omega }$ forces that
	$A^* \subset X$. The inclusion $A^* \cap i \subset X$
	is guaranteed by property~(\ref{item:goal}) and $q < q_i \conc {(p_i/G_i)}$.
\end{proof}

\section*{Acknowledgments}

\noindent
The authors would like to thank Osvaldo Guzm\'{a}n
for numerous
suggestions substantially improving the paper.

\bibliography{reference}

\providecommand{\bysame}{\leavevmode\hbox to3em{\hrulefill}\thinspace}
\providecommand{\MR}{\relax\ifhmode\unskip\space\fi MR }
% \MRhref is called by the amsart/book/proc definition of \MR.
\providecommand{\MRhref}[2]{%
  \href{http://www.ams.org/mathscinet-getitem?mr=#1}{#2}
}
\providecommand{\href}[2]{#2}
\begin{thebibliography}{Mon11}

\bibitem[Arh69]{arhangelski}
A.~V. Arhangel'ski\u\i, \emph{The power of bicompacta with first axiom of
  countability}, Dokl. Akad. Nauk SSSR \textbf{187} (1969), 967--970.
  \MR{0251695}

\bibitem[Bel86]{bella}
Angelo Bella, \emph{Free sequences in pseudoradial spaces}, Comment. Math.
  Univ. Carolin. \textbf{27} (1986), no.~1, 163--170. \MR{843428}

\bibitem[BJ95]{bartoszynski-judah}
Tomek Bartoszy{\'n}ski and Haim Judah, \emph{Set theory}, A K Peters, Ltd.,
  Wellesley, MA, 1995, On the structure of the real line. \MR{1350295
  (96k:03002)}

\bibitem[BK81]{bell-kunen}
Murray Bell and Kenneth Kunen, \emph{On the {PI} character of ultrafilters}, C.
  R. Math. Rep. Acad. Sci. Canada \textbf{3} (1981), no.~6, 351--356.
  \MR{642449}

\bibitem[BS91]{balcar-simon}
Bohuslav Balcar and Petr Simon, \emph{On minimal {$\pi$}-character of points in
  extremally disconnected compact spaces}, Topology Appl. \textbf{41} (1991),
  no.~1-2, 133--145. \MR{1129703}

\bibitem[FM]{vera-diana}
Vera Fischer and Diana~Carolina Montoya, \emph{Ideals of independence}, Arch.
  Math. Logic, to appear.

\bibitem[GG]{osvaldo}
Osvaldo Guzm\'{a}n~Gonz\'{a}les, \emph{The ultrafilter number and
  $\mathfrak{hm}$}, preprint.

\bibitem[Gol93]{goldstern-tools}
Martin Goldstern, \emph{Tools for your forcing construction}, Set theory of the
  reals ({R}amat {G}an, 1991), Israel Math. Conf. Proc., vol.~6, Bar-Ilan
  Univ., Ramat Gan, 1993, pp.~305--360. \MR{1234283}

\bibitem[Gor10]{gorelic}
Isaac Gorelic, \emph{Orders of {$\pi$}-bases}, Canad. Math. Bull. \textbf{53}
  (2010), no.~2, 286--294. \MR{2662692}

\bibitem[GQ04]{SacksForcingandtheSacksProperty}
Stefan Geschke and Sandra Quickert, \emph{On {S}acks forcing and the {S}acks
  property}, Classical and new paradigms of computation and their complexity
  hierarchies, Trends Log. Stud. Log. Libr., vol.~23, Kluwer Acad. Publ.,
  Dordrecht, 2004, pp.~95--139. \MR{2155534}

\bibitem[GS90]{goldstern-shelah}
M.~Goldstern and S.~Shelah, \emph{Ramsey ultrafilters and the reaping
  number---{${\rm Con}({\germ r}<{\germ u})$}}, Ann. Pure Appl. Logic
  \textbf{49} (1990), no.~2, 121--142. \MR{1077075}

\bibitem[Mil81]{miller-properties}
Arnold~W. Miller, \emph{Some properties of measure and category}, Trans. Amer.
  Math. Soc. \textbf{266} (1981), no.~1, 93--114. \MR{613787}

\bibitem[Mil84]{Miller-rational}
\bysame, \emph{Rational perfect set forcing}, Axiomatic set theory ({B}oulder,
  {C}olo., 1983), Contemp. Math., vol.~31, Amer. Math. Soc., Providence, RI,
  1984, pp.~143--159. \MR{763899}

\bibitem[Mon11]{monk}
J.~D. Monk, \emph{Maximal free sequences in a {B}oolean algebra}, Comment.
  Math. Univ. Carolin. \textbf{52} (2011), no.~4, 593--610. \MR{2864001}

\bibitem[Sel15]{selker}
Kevin Selker, \emph{Ideal independence, free sequences, and the ultrafilter
  number}, Comment. Math. Univ. Carolin. \textbf{56} (2015), no.~1, 117--124.
  \MR{3311582}

\bibitem[She92]{shelah-i<u}
Saharon Shelah, \emph{{${\rm CON}({\germ u}>{\germ i})$}}, Arch. Math. Logic
  \textbf{31} (1992), no.~6, 433--443. \MR{1175937 (93i:03066)}

\bibitem[She98]{shelah-proper}
\bysame, \emph{Proper and improper forcing}, second ed., Perspectives in
  Mathematical Logic, Springer-Verlag, Berlin, 1998. \MR{1623206}

\bibitem[Tod90]{stevo-FS}
Stevo Todor\v{c}evi\'{c}, \emph{Free sequences}, Topology Appl. \textbf{35}
  (1990), no.~2-3, 235--238. \MR{1058803}

\bibitem[Tod93]{stevo-SandL}
\bysame, \emph{Some applications of {$S$} and {$L$} combinatorics}, The work of
  {M}ary {E}llen {R}udin ({M}adison, {WI}, 1991), Ann. New York Acad. Sci.,
  vol. 705, New York Acad. Sci., New York, 1993, pp.~130--167. \MR{1277886}

\bibitem[Tod99]{stevo-baire}
\bysame, \emph{Compact subsets of the first {B}aire class}, J. Amer. Math. Soc.
  \textbf{12} (1999), no.~4, 1179--1212. \MR{1685782}

\end{thebibliography}
\bibliographystyle{amsalpha}

\end{document}